\theoremstyle{plain}
\newtheorem{thm}{Theorem}[subsection]
\newtheorem{lem}[thm]{Lemma}
\newtheorem{prop}[thm]{Proposition}
\theoremstyle{definition}
\newtheorem{defn}[thm]{Definition} 
\newtheorem{rem}[thm]{Remark}
\numberwithin{equation}{section}
\newcommand{\lin}{\textrm{span}}
\begin{document}

\title[DDSDDEs in finite and infinite dimensions]{Distribution-Dependent Stochastic Differential Delay Equations in finite and infinite dimensions}

\author{Rico Heinemann*}

\thanks{*Faculty of Mathematics, Bielefeld University, Universit\"atsstrasse 25, 33615 Bielefeld, Germany\\
Email: rico.heinemann@t-online.de}

\keywords{Nonlinear PDE for probability measures, McKean-Vlasov SDEs in infinite dimensions, Wasserstein distance, stochastic delay equations, variational approach}

\subjclass[2010]{60H10, 35K55}

\maketitle

\begin{abstract}
We prove that distribution dependent (also called McKean--Vlasov) stochastic delay equations of the form
\begin{equation*}
\text{d}X(t)= b(t,X_t,\mathcal{L}_{X_t})\text{d}t+ \sigma(t,X_t,\mathcal{L}_{X_t})\text{d}W(t)
\end{equation*}
have unique (strong) solutions in finite as well as infinite dimensional state spaces if the coefficients fulfill certain monotonicity assumptions.
\end{abstract}

\section*{Introduction}
The aim of this paper is to study the existence and uniqueness of Distribution-Dependent Stochastic Differential Delay Equations (DDSDDE's) in finite and infinite dimensional state spaces in the variational framework.
A DDSDDE has the form
\begin{equation*}
\text{d}X(t)= b(t,X_t,\mathcal{L}_{X_t})\text{d}t+ \sigma(t,X_t,\mathcal{L}_{X_t})\text{d}W(t),
\end{equation*}
where $W$ is a standard $\mathbb{R}^d$-valued Wiener process in the finite dimensional case and a cylindrical $Q$-Wiener process with $Q=I$ in a separable Hilbert space in the infinite dimensional case.
$X_t$ denotes the \textit{delay} or \textit{segment} of $X$ at time $t$.
$X_t$ takes values in a path-space and is defined as $X_t(\theta):=X(t+\theta)$, $\theta \in[-r_0,0]$,
whereby $r_0>0$ is fixed.
$\mathcal{L}_{X_t}$ denotes the \textit{law} of $X_t$. \\
Recently there has been an increasing interest in this type of equations as well as in classical distribution-dependent SDE's (DDSDE's) - also referred to as McKean-Vlasov SDEs - , i.e. equations of the form
\begin{align*}
\text{d}X(t)= b(t,X(t),\mathcal{L}_{X(t)})\text{d}t+ \sigma(t,X(t),\mathcal{L}_{X(t)})\text{d}W(t),
\end{align*} 
see for instance \cite{CD18}, \cite{1803.09320}, \cite{1802.03974}, \cite{MP}, \cite{1805.01682}, \cite{1805.01654}, \cite{1603.02212}, \cite{Roeck1}, \cite{Roeck2}, \cite{WangH}  or \cite{ODLP} as well as the references therein.
Clearly, SDDE's can be viewed as a sub-class of DDSDDE's.\\
A first existence and uniqueness result under monotonicity conditions for distribution-dependent SDE's without delay was published by Wang in 2018 (see reference \cite{ODLP}).
Wang's idea was carried over to the case with delay by Huang, Röckner and Wang in \cite{MP}.
\cite{MP} and \cite{ODLP} are the main reference for the second chapter of this paper.\\

A main motivation to study solutions of DDSDDE's is their relation to solutions of \textit{non-linear Fokker-Planck Kolmogorov equations} (FPKE's).
Whenever coefficients $b$ and $\sigma$ are given one can define the following differential operator from $C_0^{\infty}(\mathbb{R}^d)$ to the set of all Borel-measurable real-valued functions on $\mathcal{C}$:
\begin{align*}
\left(L_{t,\mu}f\right)(\xi):=
\sum _{i=1}^d b_i(t,\xi,\mu)(\partial _i f)(\xi (0))
+\frac{1}{2}\sum _{i,j=1}^d (\sigma \sigma^*)_{i,j}(t,\xi,\mu)(\partial _i\partial _j f)(\xi (0)),
\end{align*}
$t \geq 0$, $\mu \in \mathcal{P}_2(\mathcal{C})$, $f \in C_0^{\infty}(\mathbb{R}^d)$ and $\xi \in \mathcal{C}:=C([-r_0,0];\mathbb{R}^d)$, where $\mathcal{P}_2(\mathcal{C})$ denotes the set of all probability measues on $\mathbb R^d$ with finite second moments.
By Itô's formula one can show that if $(X(t))_{t\geq -r_0}$ is a solution of our DDSDDE,
$\mu _t:=\mathcal{L}_{X_t}$ solves the corresponding FPKE
\begin{align*}
\partial \mu (t):=L_{t,\mu _t}^* \mu _t, 
\end{align*}
where $\mu (t):=\mathcal{L}_{X(t)}= \text{law of } X(t)$.
Here, we call a continuous mapping $\mu :\mathbb{R}_+ \rightarrow \mathcal{P}_2(\mathcal{C})$ a solution of the FPKE, if
\begin{align*}
\int_0^t \int_{\mathcal{C}} \vert L_{s,\mu _s} f \vert \text{d}\mu_s\text{d}s
<\infty,
\end{align*}
and
\begin{align*}
\int _{\mathbb{R}^d} f \text{d}\mu(t)
= \int _{\mathbb{R}^d} f \text{d}\mu(0) 
+ \int_0^t \int_{\mathcal{C}} \left( L_{\mu _s,s} f\right)\text{d}\mu _s\text{d}s
\end{align*}
for all $t \geq 0$ and  $f \in C_0^{\infty}(\mathbb{R}^d)$.
For more details of the relation between Fokker-Planck equations and DDSDDE's see for instance \cite{Roeck2} or \cite{MP}.
Since this paper focuses on the existence and uniqueness of DDSDDE's, we are not going to further investigate this relation and just note that an existence and uniqueness result for FPKE's could be deduced from our existence and uniqueness result in Chapter 2 as in \cite[Chapter 2]{MP}.
For more information about FPKE see for instance \cite{FPPK}.\\

A first result for the existence and uniqueness of solutions to DDSDDE's in finite dimensions was proved by Huang, Röckner and Wang in 2017 (see \cite{MP}).
The main novelty of this paper, compared to \cite{MP}, is that we also prove an existence and uniqueness result in infinite dimensions (Theorem \ref{MT3}), i.e.~we replace $\mathbb R^d$ in \cite{MP} by a separable real Hilbert space.
To be able to do this we prove another finite dimensional result (Theorem \ref{MT2}) under assumptions which are better suited for the generalization to infinite dimensions as the conditions presented in \cite{MP}.
Moreover, our proof of the finite dimensional result replaces the iteration procedure in \cite{MP} by a fixed point argument, which turns out to be technically easier and more conceptual. \\

Next let us give a brief overview of the content of this paper.
The first chapter introduces some tools and notations, which are necessary to understand this paper.
In the second chapter, we prove that for every initial condition $\psi \in \mathcal{C}:=C([-r_0,0];\mathbb{R}^d)$ a DDSDDE has a unique solution, if certain monotonicity, coercivity, growth and continuity assumptions are fulfilled (Theorem \ref{MT2}).
To be able to do this, we define precisely what a \textit{solution} is (see Definition \ref{DefWsol} and Definition  \ref{DefSol}).
While the main idea of our proof is similar to the proof in \cite{MP}, i.e. we deduce existence of a solution to DDSDDE's from a result for stochastic differential delay equations (SDDE's),
we assume different conditions on the coefficients (see (H1) to (H4) in Chapter 3), which are better suited for the infinite dimensional case and use Banach's fixed-point theorem instead of \textit{iterating in distribution},
i.e. approximating the solution to the DDSDDE by solutions of SDDE's.
The conditions on $b$ and $\sigma$ are chosen in such a way,
that given any fixed continuous, adapted, $\mathbb{R}^d$-valued process $(X(t))_{t\geq -r_0}$ with $\mathbb{E}\left[\sup _{t \in [-r_0,T]} \vert X(t) \vert ^2 \right] < \infty$ for all $T>0$, the classical SDDE
\begin{align*}
\text{d}\left(\Lambda X\right)(t)=b(t,(\Lambda X)_t,\mathcal{L}_{X_t})\text{d}t+ \sigma(t,(\Lambda X)_t,\mathcal{L}_{X_t})\text{d}W(t),
\end{align*}
has a unique solution $\Lambda X$, fulfilling the initial condition $ \Lambda X (0)= \psi $ for given $\psi \in \mathcal{C}$.
It is clear, that $X$ is a solution of our DDSDDE, if $\Lambda X = X$.
Using the Banach fixed-point theorem, we show that there exists exactly one $X$, such that  $\Lambda X = X$ (see Lemma \ref{ExY} and Lemma \ref{KontraLem}).
In addition to existence and path-wise uniqueness, we also prove weak uniqueness.
The weak uniqueness is derived from the Yamada-Watanabe Theorem for SDDE's.\\

The third chapter contains the main novelty of this paper, as we prove an existence and uniqueness result for DDSDDE's in infinite dimensions.
That is, we replace $\mathbb{R}^d$ by a separable Hilbert space $H$, more precisely an appropriate Gelfand triple $(V,H,V^*)$.
Chapter 3 is an extension of the fourth chapter in \cite{Liu2015} to DDSDDE's, i.e. we work in the \textit{variational framework} and use a \textit{Galerkin approximation} to deduce the infinite dimensional result from the finite dimensional result.

\section{Preliminaries}
This chapter introduces some notations and results needed for the formulation and understanding of the rest of this paper, like the Wasserstein distance and an existence and uniqueness result for SDDE's.
All results are given without proof, since they are not the actual topic of this paper. \\
In addition to contents of this chapter, knowledge about measure and integration theory (c.f. \cite{MIT}), functional analysis (c.f. \cite{alt} or \cite{Yosida}), probability theory (c.f. \cite{BWT}, \cite{Doob} or \cite{Str}),
stochastic integration theory (c.f. \cite{Liu2015} or \cite{STInt}) as well as stochastic differential equations (c.f. \cite{DaPrato} or \cite{Liu2015}), is necessary to understand this paper.

\subsection{Notations}
First of all, let us fix some notations.
As usual we denote $\mathbb{N}$,  $\mathbb{Q}$ and $\mathbb{R}$ for the set of all natural, rational and real numbers, respectively. 
For $d \in \mathbb{N}$, $\mathbb{R}^d$ denotes the $d$-dimensional euclidean space, $\langle \cdot,\cdot\rangle$ the inner product and $\vert\cdot\vert$ the corresponding norm.
If $m \in \mathbb{N}$ is another natural number, $\mathbb{R}^{d\times m}$ denotes the space of all $d\times m$-matrices.
If $A$ is an arbitrary set, we write $1_A$ for its indicator function.
For $s,t \in \mathbb{R}$ we define $s \vee t := \max (s,t)$ and $s \wedge t := \min (s,t)$.
Like usual, for $a,b \in \bar{\mathbb{R}}:=\mathbb{R}\cup \lbrace - \infty \rbrace \cup \lbrace + \infty \rbrace$, $(a,b):=\lbrace x \in \bar{\mathbb{R}} : \ a < x < b \rbrace$ denotes the open interval, $[a,b]$ denotes the closed interval, $[a,b)$ denotes the left-closed interval and $(a,b]$ denotes the left-open interval.
We call $(\Omega,\mathcal{F},(\mathcal{F}_t)_{t\geq -r_0},P)$ a \textit{stochastic basis},
if $(\Omega,\mathcal{F},P)$ is a \textit{complete probability space} and $(\mathcal{F}_t)_{t\geq -r_0}$ is a \textit{normal filtration}.
If $(X, \Vert \cdot \Vert _X )$ is a Banach space, we denote $X^*$ for the dual space of $X$
and $\mathcal{B}(X)$ for the Borel sigma-algebra on $X$.
For $x \in X$ and $x^*\in X^*$ we define ${}_{X^*}\langle x^*,x\rangle _X=x^*(x)$ as the \textit{dualization} between $X$ and $X^*$.
$I_X$ or, if it is clear on which space we are working, $I$ denotes the identity operator.
If $X$ and $Y$ are Banach spaces, we denote $L(X,Y)$ for the Banach space of all bounded linear operators from $X$ to $Y$
equipped with the standard operator norm.
Moreover, if $U$ and $H$ are Hilbert spaces we denote $L_2(U,H)$ for the space of all Hilbert-Schmidt operators from $U$ to $H$ equipped with the usual norm
$\Vert T \Vert _{\text{HS}} = \left(\sum _{n=1}^{\infty} \Vert Te_n \Vert_H^2 \right)^{\frac{1}{2}}.$

\subsubsection{Path spaces}
As we will see in section 1.3 as well in chapter 2 and 3, the coefficients of a stochastic delay equation are defined on path spaces, i.e. spaces of functions.
This subsection introduces the spaces of functions which are needed in this paper.\\
Throughout this paper, whenever $(X,\Vert \cdot \Vert _X)$ is a Banach space, $p\geq 2$ and $r_0>0$ fixed, we use the following notations:\\
If $(E,d)$ is a metric space, $C(E;X)$ denotes, like usual, the set of all continuous functions from $E$ to $X$.
If $(S,\mathcal{A},\mu)$ is a measure space, $L^p(S,\mathcal{A},\mu;X)$ denotes the usual $L^p$-space (c.f. \cite{MIT}, \cite[Chapter V.5]{Yosida} or \cite[Appendix A]{Liu2015}).
If it is clear which sigma algebra $\mathcal{A}$ or which measure $\mu$ is used, we might for simplicity just denote  $L^p(S,\mu;X)$,  $L^p(S,\mathcal{A};X)$ or  $L^p(S;X)$, respectively.
In the case that $S \in \mathcal{B}(\mathbb{R}^d)$, it is always $\mathcal{A}=\mathcal{B}(S)$ and $\mu$ is the Lebesgue measure.
 $\mathcal{C}(X):=C({[-r_0,0]};X)$
equipped with the uniform norm $\Vert \xi \Vert _{\infty} := \sup _{\theta \in [-r_0,0]}\Vert \xi (\theta)\Vert _X,$
 $\mathcal{C}_{\infty}(X)=C({[-r_0,\infty)};X)$,
 equipped with the metric
 $d(\xi,\eta):=\sum _{k \in \mathbb{N}} 2^{-k} \big(\sup _{t [- r_0,k]} \Vert \xi(t)-\eta(t)\Vert _X \wedge 1\big)$,
$L^p_X:=L^p({[-r_0,0]};X)$
equipped with the standard $L^p$-norm $ \Vert \xi \Vert _{L^p_X}^p:= \int _{-r_0}^0 \Vert \xi (z) \Vert^p _X\text{d}z$.
In the case $X=\mathbb{R}^d$, for some $d \in \mathbb{N}$, we just write $\mathcal{C}, \mathcal{C}_{\infty}$ and $L^p$, respectively.

Moreover, whenever $(Y,\Vert \cdot \Vert _Y)$ is another Banach space with $X \subset Y$ continuously and $I \subset \mathbb{R}$ is an interval, we define
\begin{align*}
C(I;Y)\cap L^p(I;X)
:=\bigg\lbrace \xi \in  C(I;Y): &\ \exists \bar{\xi}:I \rightarrow X \ 
\mathcal{B}(I)/ \mathcal{B}(X)\text{-measurable such }
\\ & \text{that }\bar{\xi}=\xi \text{ d}t-a.e. 
\text{ and } \int _I \left\Vert \bar{\xi}(t)\right\Vert^p\text{d}t<\infty \bigg\rbrace
\end{align*}
and
\begin{align*}
C(I;Y)\cap L^p_{loc}(I;X)
:=\bigg\lbrace \xi \in C(I;Y): &\ \exists \bar{\xi}:I \rightarrow X 
\text{ such that } \bar{\xi}=\xi \text{ d}t-a.e.
\\ &\text{and } \int _{I^{'}} \left\Vert \bar{\xi}(t)\right\Vert^p\text{d}t<\infty
\ \forall I^{'} \subset I \text{ compact.} \bigg\rbrace.
\end{align*}
Obviously $C(I;Y)\cap L^p(I;X)=C(I;Y)\cap L^p_{loc}(I;X)$ if $I$ is compact
and $E:=L^p(I,Y)$, $C(I;Y)\cap L^p(I;X)$ is a Banach space under the norm 
$\Vert \cdot \Vert_{C(I;Y)\cap L^p(I;X)}:=\Vert \cdot \Vert_{C(I;Y)}+\Vert \cdot \Vert_{ L^p(I;X)}$, in the case that $I$ is compact.

\subsubsection{Segments of functions}
The main difference between stochastic delay differential equations and classical SDE's is - as the name already suggests - that the coefficients depend on the \textit{delay} of $X$ at time $t$ instead of the value of $X$ at time $t$.
Therefore we have to define precisely what the \textit{segment} or \textit{delay} of a function is.
We do this similar to similar to \cite[Chapter 2]{MP}.
Let $X$ be a Banach space and $r_0>0$ fixed.
For $t\geq 0$ define the map
\begin{align*}
\pi _t \colon \mathcal{C}_{\infty}(X) \longrightarrow \mathcal{C}(X)
\end{align*}
by $(\pi _t f)(\theta)=f(t+\theta)$, $\theta \in [-r_0,0]$.
In the following we will denote $f_t:=\pi _t f$, for $t\geq 0$.
\begin{rem}
\label{SegStet}
Note that $[0, \infty) \ni t \mapsto f_t$ is an element in $C([0, \infty),\mathcal{C}(X))$.
\end{rem}

\subsection{p-th order probability measures and Wasserstein distance} 
Since we want to study stochastic differential equations where the coefficients also depend on the distribution of the solution, we need to have a measure for the distance between to probability measures to be able to formulate monotonicity assumptions on our coefficients.
The \textit{Wasserstein distance} is the most important tool to do that.
The main references for this section are \cite{Ambro} and \cite{Villani2008}.
Throughout this section, let $(X, \Vert \cdot \Vert _X )$ be a separable Banach space, $\mathcal{B}(X)$ the Borel sigma-algebra on $X$ and $\mathcal{P}(X)$ the set of all probability measures on $(X, \mathcal{B}(X))$.
\begin{defn}
Let $p \geq 1$. \textit{The class of probability measures of p-th order} is defined as
\[ \mathcal{P}_p(X):=\left\lbrace \mu \in \mathcal{P}(X) : \  \text{  }  \mu(\Vert \cdot \Vert _X^p):= \int_X \Vert x \Vert _X^p \mu(\text{d}x) < \infty \right\rbrace.
\]
\end{defn}
On $\mathcal{P}_p(X)$ we can define the following metric:
\begin{defn}
For $\mu, \nu \in \mathcal{P}_p(X)$ define \textit{the p-th Wasserstein distance} as
\[
\mathbb{W}_p^X(\mu,\nu):=\underset{\gamma \in \Gamma(\mu,\nu)}{\inf}
\left(\int_{X\times X} 
\Vert x-y \Vert _X^p \gamma(\text{d}x,\text{d}y)\right)^{\frac{1}{p}}.
\]
Here $\Gamma(\mu,\nu)$ denotes the set of all couplings of $\mu$ and $\nu$, e.g.
\[
\Gamma(\mu,\nu):=\left\lbrace \gamma \in \mathcal{P}(X \times X): \ \text{  } \gamma \circ \pi _x ^{-1} = \mu \text{ and } \gamma \circ \pi _y ^{-1} = \nu \right\rbrace,
\]
where $\pi _x(x,y):=x$ and $\pi _y(x,y):=y$, $(x,y) \in X \times X$, are the standard projections.
(Here $X\times X$ is equipped with the $\sigma$-field generated by the projections.)
\end{defn}
\begin{prop}
$(\mathcal{P}_p(X),\mathbb{W}_p^X)$ is a polish space, e.g. a separable, complete metric space.
\end{prop}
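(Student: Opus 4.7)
\smallskip

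The plan is to verify the three defining properties of a Polish space in order: first that $\mathbb{W}_p^X$ is a bona fide metric on $\mathcal{P}_p(X)$, then separability, then completeness. Since these are all classical facts from optimal transport (cf. \cite{Villani2008}, \cite{Ambro}), my strategy is to follow the usual route and point out only the nontrivial ingredients.

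\emph{Metric properties.} Symmetry and non-negativity are immediate from the definition. Finiteness of $\mathbb{W}_p^X(\mu,\nu)$ follows by testing with the product coupling $\mu \otimes \nu$ and applying the elementary inequality $\|x-y\|_X^p \leq 2^{p-1}(\|x\|_X^p+\|y\|_X^p)$, using that $\mu,\nu \in \mathcal{P}_p(X)$. For $\mathbb{W}_p^X(\mu,\nu)=0 \Rightarrow \mu=\nu$, I would invoke existence of an optimal coupling (which in turn rests on tightness of $\Gamma(\mu,\nu)$ together with lower semicontinuity of $\gamma \mapsto \int\|x-y\|_X^p\,\mathrm{d}\gamma$ under weak convergence); the optimal coupling must then be concentrated on the diagonal, forcing equality of the marginals. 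The triangle inequality is the one nontrivial point and is proved via the \emph{gluing lemma}: given $\gamma_{12}\in\Gamma(\mu_1,\mu_2)$ and $\gamma_{23}\in\Gamma(\mu_2,\mu_3)$, construct on $X^3$ a measure $\gamma_{123}$ with these as consecutive marginals by disintegrating with respect to $\mu_2$, then project to $X\times X$ to get a coupling of $(\mu_1,\mu_3)$, and apply Minkowski's inequality in $L^p(X^3,\gamma_{123})$ to $\|x_1-x_3\|_X \leq \|x_1-x_2\|_X + \|x_2-x_3\|_X$.

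\emph{Separability.} Fix a countable dense set $\{x_n\}_{n\in\mathbb{N}}\subset X$. I claim that the countable family $D$ of finite convex combinations $\sum_{i=1}^N q_i \delta_{x_{n_i}}$ with $q_i\in\mathbb{Q}\cap[0,1]$ and $\sum q_i=1$ is dense in $(\mathcal{P}_p(X),\mathbb{W}_p^X)$. Given $\mu\in\mathcal{P}_p(X)$ and $\varepsilon>0$, choose $R>0$ so large that $\int_{\{\|x\|_X>R\}}\|x\|_X^p\,\mathrm{d}\mu<\varepsilon$; partition the ball $B_R\subset X$ into finitely many disjoint Borel sets $A_1,\dots,A_N$ of diameter $<\varepsilon$, each containing some $x_{n_i}\in D$; then approximate $\mu$ by a measure of the form $\sum \mu(A_i)\delta_{x_{n_i}}$ (plus an error term controlled by $\varepsilon$) and finally rationalize the weights. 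The standard product-coupling estimate yields $\mathbb{W}_p^X$-closeness.

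\emph{Completeness.} This is where the main work lies. Let $(\mu_n)\subset \mathcal{P}_p(X)$ be a $\mathbb{W}_p^X$-Cauchy sequence. The two things to show are that (a) it is tight, hence, by Prokhorov, it has a subsequence converging weakly to some $\mu\in\mathcal{P}(X)$, and (b) the limit $\mu$ lies in $\mathcal{P}_p(X)$ and $\mathbb{W}_p^X(\mu_n,\mu)\to 0$. For (a), tightness follows from boundedness of $\int\|x\|_X^p\,\mathrm{d}\mu_n$ (a consequence of the Cauchy property together with $\mathbb{W}_p^X(\mu_n,\mu_1)\geq |\mu_n(\|\cdot\|_X^p)^{1/p}-\mu_1(\|\cdot\|_X^p)^{1/p}|$) combined with tightness of $\mu_1$ alone, via a gluing argument that transports a compact set from $\mu_1$ through optimal couplings. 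For (b), the key point is to upgrade weak convergence on a subsequence to $\mathbb{W}_p^X$-convergence; this is equivalent, by the Villani/Ambrosio characterization, to uniform integrability of $\|x\|_X^p$ under $\mu_n$, which in turn follows from the Cauchy property by a uniform tail estimate. Finally, since every Cauchy sequence with a convergent subsequence is itself convergent (to the same limit), the whole sequence converges, proving completeness.

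\emph{Main obstacle.} The technically hardest step is the completeness argument, specifically establishing tightness of a $\mathbb{W}_p^X$-Cauchy sequence and the uniform integrability needed to pass from weak convergence to $\mathbb{W}_p^X$-convergence. Everything else is either a direct computation or an application of the gluing lemma.
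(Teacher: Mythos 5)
The paper does not prove this proposition at all; it simply cites \cite[Proposition 7.1.5]{Ambro}, so any self-contained argument is already ``more'' than what the paper does, and your overall outline (metric axioms via product coupling and the gluing lemma, separability via rational convex combinations of Dirac masses, completeness via tightness of Cauchy sequences, Prokhorov, and uniform $p$-integrability) is the standard textbook route and is sound in its architecture.

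There is, however, one step that fails as written. In the separability argument you choose $R$ with $\int_{\{\Vert x\Vert_X>R\}}\Vert x\Vert_X^p\,\mathrm{d}\mu<\varepsilon$ and then ``partition the ball $B_R\subset X$ into finitely many disjoint Borel sets of diameter $<\varepsilon$.'' This is impossible when $X$ is infinite-dimensional: a ball in an infinite-dimensional Banach space is not totally bounded, so no finite partition into sets of small diameter exists. Since the proposition is applied in this paper precisely to infinite-dimensional path spaces such as $\mathcal{C}(H)$ and $L^p_V$, this is not a cosmetic issue. The correct (and standard) fix is to use that $X$, being a separable Banach space, is Polish, so every single measure $\mu\in\mathcal{P}_p(X)$ is tight by Ulam's theorem; by dominated convergence one can then choose a \emph{compact} set $K$ with $\int_{X\setminus K}\bigl(1+\Vert x\Vert_X^p\bigr)\,\mathrm{d}\mu<\varepsilon$, and a compact set \emph{is} totally bounded, so it can be covered by finitely many sets of diameter $<\varepsilon$ each meeting the countable dense subset. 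With that replacement the rest of your separability argument (product-coupling estimate, rationalizing the weights) goes through. The remaining parts of your proposal --- in particular the reliance on optimal couplings for the identity of indiscernibles (justified since $\Gamma(\mu,\nu)$ is tight on a Polish space) and the tightness-plus-uniform-integrability argument for completeness --- are correct as sketched.
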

\begin{proof}
See \cite[Proposition 7.1.5]{Ambro}
\end{proof}

\subsection{Spaces of measure-valued functions}
As we will see in the next chapter, the coefficients of our stochastic equation are defined on a set of probability measures.
Therefore, to be able to formulate the conditions on the coefficients in the next chapters, we need to introduce spaces of measure-valued functions. \\
Let $(X,\Vert \cdot \Vert _X)$, $(Y,\Vert \cdot \Vert _Y)$ and $(E,\Vert \cdot \Vert _E)$ be Banach spaces such that
\begin{align*}
X\subset Y \subset E
\end{align*}
continuously and densely.
For $p \geq 2 $ we define the following spaces of measures and measure-valued functions:
\begin{align*}
\mathcal{P}_2(\mathcal{C}(Y)) &\cap \mathcal{P}_p(L^p _X)\\
 &:=\Big\lbrace \mu \in \mathcal{P}(L^p_E): \ \mu (\mathcal{C})=\mu (L^p_X)=1 
\text{ and } \mu \left( \Vert \cdot \Vert_{\infty}^2\right), \mu \big( \Vert \cdot \Vert_{L^p_X}^p\big) < \infty 
\Big\rbrace.
\end{align*}
Note that this set is well defined since $\mathcal{C}(Y) \subset L^p_E$ and $L^p_X \subset L^p_E$  continuous and hence by Kuratovski's theorem (\cite[Theorem 15.1]{Kechris2012} or \cite{Fake}) $\mathcal{C}(Y),L^p_X  \in \mathcal{B}\left(L^p_E\right)$. \\
Clearly
$\mathcal{P}_2(\mathcal{C}(Y)) \cap \mathcal{P}_p(L^p_X)$ is a metric space with respect to the metric $d:=\mathbb{W}_2^{\mathcal{C}(Y)}+\mathbb{W}_p^{L^p_X}$.
Define
\begin{align*}
C\left([0,\infty);\mathcal{P}_2(\mathcal{C})\right) &\cap L^p_{\text{loc}}\left([0,\infty);\mathcal{P}_p\left(L^{p}\right)\right) \\
&:= \bigg\lbrace
\mu: [0,\infty)\rightarrow\mathcal{P}_2(\mathcal{C}(Y))\cap \mathcal{P}_p(L^p_X): 
\mu:[0,\infty)
 \rightarrow \mathcal{P}_2(\mathcal{C}(Y)) 
\\ &\hspace{60pt} \text{ is continuous and } \int_{0}^t{\mu}_s(\Vert\cdot\Vert^p_{L^p})\text{d}s<\infty \ \forall t\geq 0 \bigg\rbrace.
\end{align*}
Note that if $\mu \in C\left([0,\infty);\mathcal{P}_2(\mathcal{C})\right) \cap L^p_{\text{loc}}\left([0,\infty);\mathcal{P}_p\left(L^{p}\right)\right)$,
then $\mu$ is  $\mathcal{B}([0,\infty))/ \linebreak \mathcal{B}\left(\mathcal{P}_2(\mathcal{C}(Y)) \cap \mathcal{P}_p(L^p _X)\right)$-measurable,
since $\mu:[0,\infty)
 \rightarrow \mathcal{P}_2(\mathcal{C}(Y)) \supset \mathcal{P}_2(\mathcal{C}(Y)) \cap \mathcal{P}_p(L^p _X) \text{ is continuous and}$ thereby $\mathcal{B}([0,\infty))/\mathcal{B}(\mathcal{P}_2\left(\mathcal{C}(Y))\right) \cap \mathcal{P}_p(L^p _X)$-measurable
and $\mathcal{B}(\mathcal{P}_2\left(\mathcal{C}(Y))\right) \cap \mathcal{P}_p(L^p _X)=\mathcal{B}(\mathcal{P}_2\left(\mathcal{C}(Y)) \cap \mathcal{P}_p(L^p _X)\right)$
by Kuratowski's theorem.
Define
\begin{align*}
C\left([0,T];\mathcal{P}_2(\mathcal{C})\right) &\cap L^p\left([0,T];\mathcal{P}_p\left(L^{p}\right)\right) \\
:=& \bigg\lbrace 
\mu: [0,T])\rightarrow\mathcal{P}_2(\mathcal{C}(Y))\cap \mathcal{P}_p(L^p_X):
\mu:[0,T]
 \rightarrow \mathcal{P}_2(\mathcal{C}(Y)) 
\\ &\hspace{70pt} \text{ is continuous and } \int_{0}^T{\mu}_s(\Vert\cdot\Vert^p_{L^p})\text{d}s<\infty \bigg\rbrace,
\end{align*}
where $T>0$ is fixed.
With the same argument as above, $\mu \in C\left([0,T];\mathcal{P}_2(\mathcal{C})\right)
\cap L^p\left([0,T];\mathcal{P}_p\left(L^{p}\right)\right)$
is $\mathcal{B}([0,T])/\mathcal{B}\left(\mathcal{P}_2(\mathcal{C}(Y)) \cap \mathcal{P}_p(L^p _X)\right)$-measurable.

\section{Distribution-Dependent SDE's with delay in finite dimensions}
The aim of this chapter is to solve the following delay-distribution dependent SDE in $\mathbb{R}^d$:
\begin{equation}
\text{d}X(t)= b(t,X_t,\mathcal{L}_{X_t})\text{d}t+ \sigma(t,X_t,\mathcal{L}_{X_t})\text{d}W(t),
\label{3}
\end{equation}
where $W=(W(t))_{t\geq 0}$ is a $d$-dimensional Brownian motion,
$ \in \mathbb{N}$,
defined on a stochastic basis $(\Omega, \mathcal{F},(\mathcal{F}_t)_{t \geq -r_0}, P)$,
with $r_0>0$ fixed
and
\begin{align*}
b &\colon  {[0,\infty)} \times \mathcal{C} \times (\mathcal{P}_2(\mathcal{C})\cap \mathcal{P}_p(L^p))
\longrightarrow \mathbb{R}^d; \\
\sigma  &\colon  {[0,\infty)} \times \mathcal{C} \times (\mathcal{P}_2(\mathcal{C})\cap \mathcal{P}_p(L^p)) \longrightarrow \mathbb{R}^{d \times d}
\end{align*}
$\mathcal{B}([0,\infty) \otimes \mathcal{B}(\mathcal{C}) \otimes \mathcal{B}(\mathcal{P}_2(\mathcal{C})\cap \mathcal{P}_p(L^p))$-measurable,
whereby $p \geq 2$ is fixed. \\

The main difficulty, compared to the well-known, classical SDE's (c.f.  \cite{Krylov} or \cite{Liu2015}),
that has to be overcome to get an existence and uniqueness result, is to deal with the delay and the distribution dependence.
To achieve such a result we first formulate certain conditions on the coefficients $b$ and $\sigma$ and define precisely what a \textit{solution} of (\ref{3}) is.
Afterwards we are going to prove existence and uniqueness of solutions to (\ref{3}).
The main inspiration for our proof comes from \cite{MP}, i.e. the existence of solutions to (\ref{3}) is derived from an existence and uniqueness result about SDDE's.
But unlike in \cite{MP}, we use the Banach fixed point theorem instead of an \textit{iteration in distribution} and use \cite[Theorem 4.2]{RZhu} instead of \cite[Corollary 4.1.2]{WangH} to the show the existence and uniqueness of SDDE's, because our conditions on the coefficients differ from those in \cite{MP}.

\subsection{Conditions on the coefficients and main result}
To show existence and uniqueness of solutions to (\ref{3}), we fix $p \geq 2$ and assume that the coefficients $b$ and $\sigma$ fulfill the following conditions. 
For simplicity we write $\mathbb{W}_2$ instead of $\mathbb{W}_2^{\mathcal{C}}$ for the Wasserstein distance on 
$\mathcal{P}_2(\mathcal{C})$ and use the notations introduced in 1.1 and 1.2.

\begin{itemize}
\item[(H1)] (Continuity)
For every $t \geq 0$, $b(t,\cdot,\cdot)$ and $\sigma(t,\cdot,\cdot)$ are continuous on $\mathcal{C} \times (\mathcal{P}_2(\mathcal{C})\cap\mathcal{P}_p(L^p))$.
\item[(H2)] (Coercivity)
There exists $\alpha \colon \mathbb{R}_+  \mapsto \mathbb{R}_+$ non-decreasing such that
\begin{align*}
\int_0^t 2\langle b(s,\xi _s, \mu _s),\xi(s) \rangle \text{d}s  
 \leq 
&- \frac{1}{2} \int_0^t \vert \xi (s) \vert^p \text{d}s
+ \alpha (t) \Vert \xi _0 \Vert _{L^p}^p 
\\ &+ \alpha (t) \int_0^t \left( 1+ \Vert \xi _s \Vert _{\infty}^2 + \mu _s (\Vert \cdot \Vert _{\infty}^2) \right) \text{d}s,
\end{align*}
for all $t \geq 0$, $\xi \in \mathcal{C}_{\infty}$ and $\mu \in  C\left([0,\infty);\mathcal{P}_2(\mathcal{C})\right)
\cap L^p_{\text{loc}}\left([0,\infty);\mathcal{P}_p\left(L^{p}\right)\right)$.
\item[(H3)] (Monotonicity)
There exists 
$\beta \colon \mathbb{R}_+  \mapsto \mathbb{R}_+$,
non-decreasing, 
such that 
\begin{align*}
&\int_0^t 2\langle b(s,\xi_s, \mu _s)-b(s,\eta _s, \nu _s),\xi(s)-\eta(s) \rangle 
 \text{d}s  
\\& \quad \leq  \beta (t) \int_0^t  \Vert \xi _s- \eta _s \Vert _{\infty}^2
+ \mathbb{W}_2(\mu _s, \nu _s)^2 \text{d}s 
+ \beta (t)   \Vert \xi _0 -  \eta _0 \Vert _{L^p}^p
\end{align*}
and
\begin{align*}
&\int_0^t \Vert \sigma(s,\xi _s, \mu _s)-\sigma(s,\eta _s, \nu _s) \Vert _{\text{HS}}^2  \text{d}s 
\\ &\quad \leq  \beta (t) \int_0^t  \Vert \xi _s- \eta _s \Vert _{\infty}^2
+ \mathbb{W}_2(\mu _s, \nu _s)^2 \text{d}s
+ \beta (t)   \Vert \xi _0 -  \eta _0 \Vert _{L^p}^p,
\end{align*}
for all $t \geq 0$; $\xi, \eta \in \mathcal{C}_{\infty}$ and  $\mu , \nu \in C\left([-r_0,\infty);\mathcal{P}_2(\mathcal{C})\right)
\cap L^p_{\text{loc}}\left([-r_0,\infty);\mathcal{P}_p\left(L^{p}\right)\right)$.
\item[(H4)] (Growth)
$b$ is bounded on bounded sets in $ {[0,\infty)} \times \mathcal{C} \times( \mathcal{P}_2(\mathcal{C})\cap \mathcal{P}_p(L^p))$,
and there exists a non-decreasing function $\gamma \colon \mathbb{R}_+ \mapsto \mathbb{R}_+$
and some $q_0 \in \mathbb{N}$ 
such that 
\begin{align*}
\int_0^t \vert b(s, \xi _s, \mu _s) \vert ^{\frac{p}{p-1}} \text{d}s 
&\leq
\gamma (t) \left( \int_{0}^t \vert \xi (s) \vert^p 
+\mu _s (\Vert \cdot \Vert_{L^p}^p)  \text{d}s 
+\Vert \xi _0  \Vert _{L^p}^p \right)^{q_0} \\
&\quad +\gamma (t) \bigg( 1+ \sup _{s \in [0,t]} \Vert \xi _s \Vert _{\infty}^{2q_0} + \sup _{s \in [0,t]} \mu _s \left(\Vert \cdot \Vert _{\infty}^2\right)^{q_0} \bigg)
\end{align*}
and
\begin{align*}
\Vert \sigma(t,\xi _t, \mu _t) \Vert ^2_{\text{HS}}
\leq \gamma (t)  \left( 1+  \Vert \xi _t \Vert _{\infty}^2 +  \mu _t (\Vert \cdot \Vert _{\infty}^2) \right),
\end{align*}
for all $t \geq 0$, $\xi \in\mathcal{C}_{\infty}$ and $\mu \in  C\left([-r_0,\infty);\mathcal{P}_2(\mathcal{C})\right)
\cap L^p_{\text{loc}}\left([-r_0,\infty);\mathcal{P}_p\left(L^{p}\right)\right)$.
\end{itemize}

Let us briefly comment on these conditions.
First of all, these conditions look similar to standard monotonicity and coercivity conditions, like they were for example formulated in \cite{Krylov} or \cite{Liu2015}.
The main difference is that, in order to deal with the delay and the distribution dependence, the sup-norm and the Wasserstein metric appear on the right hand side.
Another difference is that the conditions are in integrated form, which, as we are going to discuss in further detail in section 2.3.2 of this chapter, will be helpful for the generalization to infinite dimensions in the next chapter.\\
Moreover, in the case that $b$ and $\sigma$ are distribution independent,
i.e. $b(t,\xi,\mu)=\bar{b}(t,\xi)$ and $\sigma(t,\xi,\mu)=\bar{\sigma}(t,\xi)$,
the existence of a solution to (\ref{3}) is ensured by \cite[Theorem 4.2]{RZhu}, because,
as we will see in the proof of Lemma \ref{ExY},
for those $b$ and $\sigma$ (H1)-(H4) imply (H1)-(H5) in \cite{RZhu}.
Note that the measurability of $s \mapsto b(s, \xi _s, \mu _s)$ and $s \mapsto \sigma(s, \xi _s, \mu _s)$, with $\xi$ and $\mu$ as in the conditions is ensured by Remark \ref{SegStet} and the assumptions on $\xi$ and $\mu$.
By (H4) all integrals in (H1)-(H3) are well-defined.\\

In the following we introduce different notions of solution to (\ref{3}) and uniqueness of solutions.
\begin{defn}
A pair $(X,W)$, where $X=(X(t))_{t \geq -r_0}$ is an $(\mathcal{F}_t)$-adapted, $\mathbb{R}^d$-valued process with continuous sample paths 
and $W$ is a $\mathbb{R}^d$-valued, $(\mathcal{F}_t)$-Wiener process on a stochastic basis $(\Omega, \mathcal{F},(\mathcal{F}_t)_{t \geq -r_0},P)$
is called a  \textit{weak solution} of (\ref{3}) with initial condition $\psi \in \mathcal{C}$ iff
\begin{itemize}
\item[(i)]
\begin{equation}
\mathbb{E}[\Vert X_t \Vert _{\infty}^2] 
+ \int_{-r_0}^t\mathbb{E}[ \vert X(s)\vert^p]\text{d}s< \infty,
\label{IntBed}
\end{equation}
for all $t \geq 0$;
\item[(ii)]
\begin{equation}
X(t)=X(0)+
 \int_{0}^t  b(s,X_s,\mathcal{L}_{X_s}) \text{d}s + \int_{0}^t  \sigma(s,X_s,\mathcal{L}_{X_s}) \text{d}W(s),  
\label{DefSoleq}
\end{equation}
for all $t \geq 0$ $P$-a.s.; and
\item[(iii)]
\begin{equation}
X(t)=\psi(t),
\label{IntCondEq}
\end{equation}
for all $t \in [-r_0,0]$ $P$-a.s.
\end{itemize}
\label{DefWsol}
\end{defn}

\begin{rem}
Note that (\ref{IntBed}) implies that for every weak solution $(X(t))_{t \geq
-r_0}$ we have
\begin{align*}
\mathbb{E}\bigg[\int_0^T \Vert X_{t} \Vert _{L^p}^p \text{d}t +
\sup_{t \in [0,T]} \Vert X_{t} \Vert _{\infty}^2 \bigg] < \infty \quad \forall \, T \geq 0
\end{align*}
and $(X_t)_{t \in  [0,T]}$ is a continuous $\mathcal{C}$-valued process.
This, together with Lebesgues theorem, implies that $[0,\infty) \ni t \mapsto \mathcal{L}_{X_{t}} $
is a continuous map from $[0,\infty)$ to $(\mathcal{P}_2(\mathcal{C}),\mathbb{W}_2)$.
By Kuratowski's theorem and (\ref{IntBed}), this implies that $[0,\infty) \ni t \mapsto \mathcal{L}_{X_t}$ is $\mathcal{B}([0,\infty))/ \\ \mathcal{B}(\mathcal{P}_2(\mathcal{C})\cap \mathcal{P}_p(L^p))$-measurable.\\
In particular, $(t,\omega) \mapsto  b(t, X_{t}(\omega), \mathcal{L}_{ { X_{t}}})$
and $(t,\omega) \mapsto  \sigma(t, X_{t}(\omega), \mathcal{L}_{{ X_{t}}})$
are progressively measurable maps.
Thus the integrals on the right-hand side of (\ref{DefSoleq}) are well-defined.
\label{StetRem}
\end{rem}

\begin{defn}
We say (\ref{3}) has a  \textit{(strong) solution} if for
every stochastic basis $(\Omega, \mathcal{F},(\mathcal{F}_t)_{t \geq -r_0},P)$
with a given $\mathbb{R}^d$-valued, $(\mathcal{F}_t)$-Wiener process $W$
and given initial condition $\psi \in \mathcal{C}$,
there exists a $(\mathcal{F}_t)$-adapted, continuous $\mathbb{R}^d$-valued process $X$ such that $X$  fulfills (\ref{IntBed})-(\ref{IntCondEq}) in Definition \ref{DefWsol}.
\label{DefSol}
\end{defn}

The next definitions recall different notions of uniqueness (c.f. \cite[Appendix E]{RZhu}).
\begin{defn}
\label{DefEin1}
We say that \textit{weak uniqueness} holds for (\ref{6}) if whenever $(X,W)$ and $(\tilde{X},\tilde{W})$ are weak solutions with stochastic basis
$(\Omega, \mathcal{F},(\mathcal{F}_t)_{t \geq -r_0},P)$ and \linebreak $(\tilde{\Omega}, \tilde{\mathcal{F}},(\tilde{\mathcal{F}}_t)_{t \geq -r_0},\tilde{P})$
such that
\begin{align*}
X_0=\tilde{X}_0=\psi,
\end{align*}
for some $\psi \in \mathcal{C}$, then
\begin{align*}
P \circ X^{-1} =\tilde{P} \circ \tilde{X}^{-1}
\end{align*}
as measures on $\left(\mathcal{C}_{\infty},\mathcal{B}(\mathcal{C}_{\infty})\right)$.
\end{defn}

\begin{defn}
\label{DefEin2}
We say that \textit{path-wise uniqueness} holds for (\ref{6}), if whenever $(X,W)$ and $(\tilde{X},W)$ are two weak solutions on the same stochastic basis \linebreak
$(\Omega, \mathcal{F},(\mathcal{F}_t)_{t \geq -r_0},P)$
and with the same Wiener process $W$ on $(\Omega, \mathcal{F},(\mathcal{F}_t)_{t \geq -r_0},P)$ such that $X_0=\tilde{X}_0$ $P$-a.s., then
\begin{align*}
X(t)=\tilde{X}(t),
\end{align*}
for all $t \geq 0$ $P$-a.s.
\end{defn}
The next Theorem is the main result of this chapter and shows the existence of a unique strong solution as well as weak uniqueness.

\begin{thm}
\label{MT2}
Assume (H1)-(H4).
\begin{itemize}
\item[(a)]
For any $\psi  \in \mathcal{C}$, (\ref{3}) has a (pathwise) unique (strong) solution $(X(t))_{t \geq -r_0}$,
fulfilling $ X_{0}=\psi $.
Moreover
\begin{equation}
\label{qEnd}
\mathbb{E}\bigg[\sup _{t \in [-r_0,T]}\vert X(t)\vert^{2q} \bigg]<\infty,
\end{equation}
for all $T>0$ and $q \in \mathbb{N}$.
\item[(b)]
Whenever $(X,W)$ and $(Y,W)$ are weak solutions of (\ref{6}) on a stochastic basis $(\Omega, \mathcal{F},(\mathcal{F}_t)_{t \geq -r_0},P)$, we have
\begin{itemize}
\item[(i)]
\begin{align}
\mathbb{E}&\bigg[\sup_{t \in [-r_0,T]}\vert X(t) - Y(t) \vert ^2 \bigg] \label{EindAb} \\
&\leq \inf_{\epsilon \in (0,1)} \Bigg\lbrace \Bigg(\frac{\mathbb{E}\left[ \Vert X_0 -Y_0 \Vert_{\infty}^2\right]}{1-\epsilon} +2\beta (t)\bigg(\frac{\epsilon +6}{(1-\epsilon)\epsilon}\bigg)\mathbb{E}\big[\Vert X_0 -Y_0 \Vert_{L^p}^p\big]\Bigg) \notag
\\& \hspace{164pt} \cdot \exp\left(4\beta(t)\left(\frac{\epsilon +3}{(1-\epsilon)\epsilon} \right)t\right)\Bigg\rbrace. \notag
\end{align}
\item[(ii)]
\begin{align*}
\mathbb{E}\bigg[\sup _{r \in [-r_0,T]}\vert X(r) \vert^2 &+ \int_0^T \vert X(s) \vert ^p \text{d}s \bigg] \\
 &\leq H(T)\left(1 + \mathbb{E}\left[\Vert X_0 \Vert^2_{\infty}\right] 
+\mathbb{E}\left[\Vert X_0  \Vert_{L^p}^p\right]\right),
\end{align*}
for all $T>0$ and some non-decreasing function $H: \mathbb{R}_+ \rightarrow \mathbb{R}_+$.
\end{itemize}
\item[(c)]
(\ref{3}) has weak uniqueness.
\end{itemize}
\end{thm}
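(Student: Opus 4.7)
The plan is to reduce (\ref{3}) to a classical SDDE by freezing the distribution argument and then applying Banach's fixed-point theorem. Fix $T>0$ and $\psi\in\mathcal{C}$, and let $\mathcal{X}_T$ denote the space of continuous $(\mathcal{F}_t)$-adapted $\mathbb{R}^d$-valued processes $X$ on $[-r_0,T]$ with $X_0=\psi$ and $\mathbb{E}[\sup_{t\in[-r_0,T]}|X(t)|^2]<\infty$. For $X\in\mathcal{X}_T$, set $\bar b(t,\xi):=b(t,\xi,\mathcal{L}_{X_t})$ and $\bar\sigma(t,\xi):=\sigma(t,\xi,\mathcal{L}_{X_t})$; by Remark \ref{StetRem} the map $t\mapsto\mathcal{L}_{X_t}$ is continuous into $(\mathcal{P}_2(\mathcal{C}),\mathbb{W}_2)$ and measurable into $\mathcal{P}_2(\mathcal{C})\cap\mathcal{P}_p(L^p)$. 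A first step is to verify that (H1)--(H4) then imply the hypotheses (H1)--(H5) of \cite[Theorem 4.2]{RZhu} for $(\bar b,\bar\sigma)$, which is essentially bookkeeping since freezing the measure argument only absorbs distribution-dependent terms into the time-dependent coefficients. This yields a unique (strong) SDDE solution $\Lambda X$ with $(\Lambda X)_0=\psi$, and any solution to (\ref{3}) is a fixed point of $\Lambda$.

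The key analytical step is to show $\Lambda$ is a contraction in a suitable norm. For $X,Y\in\mathcal{X}_T$, apply Itô's formula to $|\Lambda X(t)-\Lambda Y(t)|^2$ on $[0,t]$ (the initial segments agree, so the boundary terms vanish), take $\sup$ over $[0,T']$, use BDG on the martingale part and invoke (H3), combined with the crucial coupling estimate $\mathbb{W}_2(\mathcal{L}_{X_s},\mathcal{L}_{Y_s})^2\le\mathbb{E}[\|X_s-Y_s\|_\infty^2]$. This produces
\[
\mathbb{E}\Bigl[\sup_{t\in[0,T']}|(\Lambda X-\Lambda Y)(t)|^2\Bigr]
\le C\beta(T)\int_0^{T'}\mathbb{E}\bigl[\|X_s-Y_s\|_\infty^2\bigr]\,ds,
\]
which gives contraction either on a short interval (then iterate to cover $[0,T]$) or, cleaner, after equipping $\mathcal{X}_T$ with the weighted norm $\|X\|_\lambda^2:=\sup_{t\in[0,T]}e^{-\lambda t}\mathbb{E}[\|X_t-\psi\|_\infty^2]$ and choosing $\lambda$ large enough. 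The unique fixed point is the desired strong solution, and pathwise uniqueness follows by the same Itô/Gronwall argument applied directly to two solutions on the same basis.

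For the moment bound (\ref{qEnd}) I would apply Itô's formula to $|X(t)|^{2q}$; (H2) absorbs the drift up to the negative $|X|^p$ term and the terms involving $\|X_s\|_\infty^2$ and $\mathcal{L}_{X_s}(\|\cdot\|_\infty^2)$, (H4) controls $\|\sigma\|_{\mathrm{HS}}^2$, and BDG plus the standard $\epsilon$-trick handle the stochastic integral after taking suprema, so Gronwall closes the estimate. The same template proves (b)(ii). For (b)(i), apply Itô to $|X-Y|^2$ of two weak solutions on the same basis; using (H3), $\mathbb{W}_2^2\le\mathbb{E}\|\cdot\|_\infty^2$, BDG, and Young's inequality with parameter $\epsilon\in(0,1)$ on the martingale term produces precisely the displayed inequality (\ref{EindAb}) after Gronwall, with the $\tfrac{1}{1-\epsilon}$ and $\tfrac{\epsilon+6}{(1-\epsilon)\epsilon}$ factors arising from splitting the BDG constant.

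Finally, weak uniqueness (c) is the standard Yamada--Watanabe conclusion for SDDE's: the pathwise uniqueness proved in (a) together with weak existence forces uniqueness of the law on $(\mathcal{C}_\infty,\mathcal{B}(\mathcal{C}_\infty))$, since any weak solution with initial segment $\psi$ is (by the fixed-point characterization) a measurable functional of the driving Wiener path. The main obstacle I expect is the contraction step: (H3) couples the $\|\cdot\|_\infty^2$ control with an $\|\cdot\|_{L^p}^p$ initial-segment term that must be arranged to vanish under the choice of metric on $\mathcal{X}_T$, and the measurability of $t\mapsto\mathcal{L}_{X_t}$ into $\mathcal{P}_2(\mathcal{C})\cap\mathcal{P}_p(L^p)$ (so that $\bar b,\bar\sigma$ fit the framework of \cite{RZhu}) must be checked carefully via the Kuratowski-type argument already used in Remark \ref{StetRem}.
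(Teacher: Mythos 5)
Your overall strategy coincides with the paper's: freeze the law to reduce (\ref{3}) to a classical SDDE solvable by \cite[Theorem 4.2]{RZhu}, realize solutions of (\ref{3}) as fixed points of the resulting map $\Lambda$, and close the argument with a contraction estimate obtained from It\^o's formula, (H3), the coupling bound $\mathbb{W}_2(\mathcal{L}_{X_s},\mathcal{L}_{Y_s})^2\le\mathbb{E}\left[\Vert X_s-Y_s\Vert_\infty^2\right]$, BDG and Gronwall. The paper iterates $\Lambda$ $n$ times and invokes the generalized Banach fixed-point theorem (Lemma \ref{KontraLem}) instead of a weighted norm, but these are interchangeable, and your sketches for (b)(i) and (b)(ii) match the paper's proofs. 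For (c), note that Yamada--Watanabe cannot be applied to (\ref{3}) directly, since two weak solutions on different bases a priori carry different laws in the coefficients; as in \cite{MP} one applies it to the SDDE with the law of one solution frozen and then uses pathwise uniqueness of (\ref{3}) to identify the laws, so your justification via ``measurable functional of the Wiener path'' is too quick but repairable.

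There is one concrete gap in your setup: you take the fixed-point space $\mathcal{X}_T$ to consist of processes with only $\mathbb{E}\left[\sup_{t\in[-r_0,T]}\vert X(t)\vert^2\right]<\infty$. Since $p$ may be strictly larger than $2$, for such an $X$ the law $\mathcal{L}_{X_t}$ need not belong to $\mathcal{P}_p(L^p)$, hence need not lie in the domain $\mathcal{P}_2(\mathcal{C})\cap\mathcal{P}_p(L^p)$ of $b$ and $\sigma$ at all; the frozen coefficients $\bar b,\bar\sigma$ --- and therefore $\Lambda$ --- are not well defined on $\mathcal{X}_T$. (Remark \ref{StetRem}, which you invoke for the regularity of $t\mapsto\mathcal{L}_{X_t}$, applies to weak solutions, which carry the extra integrability (\ref{IntBed}); a generic element of $\mathcal{X}_T$ does not.) Verifying the hypotheses of \cite{RZhu} for the frozen drift likewise needs $\mu_s(\Vert\cdot\Vert_{L^p}^p)$ and $\sup_s\mu_s(\Vert\cdot\Vert_\infty^2)^{q_0}$ finite via (H4). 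This is precisely why the paper works in $E^q(T)$ with $q\ge p/2$; once you upgrade $\mathcal{X}_T$ to carry $2q$-th moments with $2q\ge p$, your contraction argument goes through, and the moment bound (\ref{qEnd}) is then obtained for free from uniqueness of the fixed point in every $E^q(T)$ rather than by a separate It\^o computation for $\vert X(t)\vert^{2q}$.
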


\subsection{Proof of the main result}
We are going to prove the main result by using the Banach fixed-point theorem.
Fix a stochastic basis $(\Omega,  \mathcal{F}, (\mathcal{F}_t)_{t \geq -r_0}, P)$,
a $d$-dimensional $(\mathcal{F}_t)$-Brownian motion $(W(t))_{t \geq -r_0}$ and an initial condition $\psi \in \mathcal{C}$.
For $T>0$ and $q \in  \mathbb{N}$ define
\begin{align*}
E^q(T):=\big\lbrace X \in L^{2q}(\Omega,  \mathcal{F},P; & C([-r_0,T];\mathbb{R}^d)): (X(t))_{t \in [-r_0,T]} \text{ is a }\\
&(\mathcal{F}_t)_{t \in [-r_0,T]}-\text{adapted, continuous process} \big\rbrace.
\end{align*}
Clearly, $E^q(T)$ is a Banach space with respect to the norm
\begin{align*}
\Vert X \Vert _{E^q(T)}^{2q} := \mathbb{E}\bigg[\sup _{t \in [-r_0,T]} \vert X(t) \vert^{2q} \bigg].
\end{align*}
Moreover define
\begin{align*}
E^q:=\left\lbrace X: \Omega \times [-r_0, \infty) \rightarrow \mathbb{R}^d: \
X \mid _{\Omega \times [-r_0, T]} \in E^q(T) \ \forall T>0 \right\rbrace.
\end{align*}
Next, solve for any $X \in E^q$ the classical path-dependent SDE
\begin{equation}
\label{6}
\begin{cases} 
\text{d}Y(t)= b(t,Y_{t},\mu_{t})\text{d}t+ \sigma(t,Y_{t},\mu_{t})\text{d}W(t), \ t \geq 0, \\
Y_0=\psi ,
\end{cases}
\end{equation}
where $\mu_{t}:=\mathcal{L}_{X_{t}}$.

Before we can prove Theorem \ref{MT2} we need the following two lemma.
The first lemma deals with the existence of solutions to (\ref{6}).

\begin{lem}
\label{ExY}
Assume (H1)-(H4). Then for any $X \in E^q$, $q \geq \frac{p}{2}$ and any initial condition $\psi \in \mathcal{C}$, (\ref{6}) has a unique solution $Y \in E^q$, i.e. there exists a unique continuous, adapted, $\mathbb{R}^d$-valued processes $(Y(t))_{t\geq -r_0}$ which fulfills (\ref{6}).
Moreover, for all $T>0$, 
\begin{equation*}
\mathbb{E}\bigg[\sup_{t \in [-r_0,T]} \vert Y(t) \vert ^{2q} \bigg] < \infty.
\end{equation*}
\end{lem}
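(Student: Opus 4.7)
The strategy is to reduce Lemma \ref{ExY} to the classical (distribution-independent) SDDE existence theorem \cite[Theorem 4.2]{RZhu}. Fix $X \in E^q$ with $q \geq p/2$, set $\mu_t := \mathcal{L}_{X_t}$ (regarded as a \emph{deterministic} time-dependent input), and put
\[
\tilde b(t,\xi) := b(t,\xi,\mu_t), \qquad \tilde\sigma(t,\xi) := \sigma(t,\xi,\mu_t).
\]
Equation (\ref{6}) then reads $dY(t) = \tilde b(t,Y_t)\,dt + \tilde\sigma(t,Y_t)\,dW(t)$ with $Y_0 = \psi$. The plan is to verify the hypotheses of \cite[Theorem 4.2]{RZhu} for $(\tilde b, \tilde\sigma)$, invoke that theorem to get a unique $(\mathcal F_t)$-adapted continuous solution $Y$, and finally check the $2q$-th moment bound.

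\textbf{Step 1 (regularity of $\mu$).} Since $X \in E^q$ with $q \geq p/2$, for every $T > 0$ we have $\mathbb E[\sup_{t \in [-r_0,T]}|X(t)|^{2q}] < \infty$, which gives $\sup_{t \in [0,T]} \mathbb E[\Vert X_t\Vert_{\infty}^2] < \infty$ and, by Fubini, $\int_0^T \mathbb E[\Vert X_t\Vert_{L^p}^p]\,dt < \infty$. Remark \ref{SegStet} combined with dominated convergence makes $t \mapsto X_t$ continuous from $[0,T]$ into $L^2(\Omega;\mathcal C)$, hence $t \mapsto \mu_t$ is continuous into $(\mathcal P_2(\mathcal C), \mathbb W_2)$, and $\int_0^T \mu_t(\Vert\cdot\Vert_{L^p}^p)\,dt < \infty$. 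So $\mu$ lies in the space on which (H1)--(H4) are formulated, and the deterministic functions $s \mapsto \mu_s(\Vert\cdot\Vert_{\infty}^2)$, $s \mapsto \mu_s(\Vert\cdot\Vert_{L^p}^p)$ are respectively locally bounded and locally integrable.

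\textbf{Step 2 (inheritance of the \cite{RZhu} hypotheses).} Continuity of $\tilde b(t,\cdot), \tilde\sigma(t,\cdot)$ on $\mathcal C$ is immediate from (H1). In (H2), freezing $\mu$ lets us absorb $\int_0^t \alpha(t)(1 + \mu_s(\Vert\cdot\Vert_{\infty}^2))\,ds$ into a locally bounded deterministic remainder, producing the integrated coercivity of \cite{RZhu}. Applying (H3) with $\nu = \mu$ kills the Wasserstein term and yields the monotonicity there. The growth (H4) for $\tilde b$ becomes
\[
\int_0^t |\tilde b(s,\xi_s)|^{p/(p-1)} ds \leq \gamma(t)\Big(\!\int_0^t \!|\xi(s)|^p ds + \Vert\xi_0\Vert_{L^p}^p + C_1(t)\!\Big)^{\!q_0} \!+ \gamma(t)\big(1 + \sup_{s \leq t}\Vert\xi_s\Vert_{\infty}^{2q_0} + C_2(t)\big),
\]
with $C_1, C_2$ locally bounded deterministic remainders, and analogously $\Vert\tilde\sigma(t,\xi_t)\Vert_{\text{HS}}^2 \leq \gamma(t)(1 + \Vert\xi_t\Vert_{\infty}^2) + C_3(t)$; both fit the growth shape required in \cite{RZhu}.

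\textbf{Step 3 (application and moments).} \cite[Theorem 4.2]{RZhu} now produces a pathwise unique adapted continuous solution $Y$ with $\mathbb E[\sup_{t \in [-r_0,T]}|Y(t)|^2] < \infty$. The higher moment bound $\mathbb E[\sup_{t \in [-r_0,T]}|Y(t)|^{2q}] < \infty$ follows either from the same theorem or, more explicitly, by applying It\^o's formula to $|Y(t)|^{2q}$, using (H2) and the growth bound on $\sigma$ in (H4) together with Burkholder--Davis--Gundy and Gronwall; the $\mu$-dependent remainders are locally bounded by Step 1. Thus $Y \in E^q$. The main obstacle is the bookkeeping in Step 2: one must check that the deterministic remainders produced by freezing $\mu_t = \mathcal L_{X_t}$ really slot into the precise form required by \cite{RZhu}, in particular the polynomial $(\,\cdot\,)^{q_0}$ structure inside the growth condition; everything else is routine given that reference.
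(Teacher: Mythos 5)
Your proposal is correct and takes essentially the same route as the paper: the paper's proof also freezes the law by setting $\bar{b}(t,\xi):=b(t,\xi,\mu_t)$, $\bar{\sigma}(t,\xi):=\sigma(t,\xi,\mu_t)$ and then invokes \cite[Theorem 4.2]{RZhu}, merely asserting that (H1)--(H4) imply the hypotheses there. Your version simply spells out the verification (regularity of $t\mapsto\mu_t$, absorption of the frozen-measure terms into deterministic remainders, and the higher moment bound) that the paper leaves as ``easy to see.''
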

\begin{proof}
Define $\bar{b}(t,\xi):=b(t,\xi ,\mu_t)$ and 
$\bar{\sigma}(t,\xi):=\sigma(t,\xi ,\mu_t)$,
$(t,\xi)\in [0,\infty)\times\mathcal{C}$.
Now it is easy to see that $\bar{b}$ and $\bar{\sigma}$ fulfill (H1)-(H5) in \cite[Theorem 4.2]{RZhu}.
\end{proof}

Now take $T>0$ and $q \in \mathbb{N}$ with $q \geq \frac{p}{2}$ fixed but arbitrary. For $X \in E^q(T)$ define $\Lambda X \in E^q(T)$ as the unique solution to (\ref{6}) up to time $T$ and $\Lambda X \in E^q(T)$.
$\Lambda :E^q(T) \rightarrow E^q(T)$ is a well-defined mapping, since we can extend every $X \in E^q(T)$ to an element $\tilde{X}\in E^q$ by setting $\tilde{X}(t):=X(T)$ for $t >T$ and apply Lemma \ref{ExY} to $\tilde{X}$
in order to get a solution up to infinity and therefore up to time $T>0$.
The path-wise uniqueness up to time $T$ can be proved as in \cite{RZhu} or as in the proof of Theorem \ref{MT2} (ii) below. \\
If $X \in E^q(T)$ is a fixed-point of $\Lambda$, i.e. $\Lambda X=X$, we have for $t \in [0,T]$ that
\begin{align*}
X(t)=\Lambda X (t)&=
\Lambda X(0)+\int_0^t b(s, (\Lambda X)_s, \mathcal{L}_{X_s})\text{d}s
+\int_0^t \sigma(s, (\Lambda X)_s, \mathcal{L}_{X_s})\text{d}W(s)\\
&= X(0)+\int_0^t b(s, X_s, \mathcal{L}_{X_s})\text{d}s
+\int_0^t \sigma(s, X_s, \mathcal{L}_{X_s})\text{d}W(s)
\ \ P-\text{a.s.}
\end{align*}
and
\begin{align*}
X(t)=\Lambda X (t)=\psi (t)
\end{align*}
for all $t \in [-r_0,0]$ $P$-a.s.
Thus $X$ is a solution of (\ref{6}) up to time $T$. 
Therefore our next step is to show that $\Lambda$ fulfills the conditions of the generalized Banach fixed-point theorem.

\begin{lem}
There exists $K_q: \mathbb{R}_+ \rightarrow \mathbb{R}_+$ non-decreasing such that for all $X,Y \in E^q(T)$ and $n \in \mathbb{N}$
\begin{equation}
\mathbb{E}\bigg[\sup _{t \in [-r_0,T]} \vert \Lambda^n X(t)- \Lambda ^n Y(t) \vert ^{2q} \bigg]
\leq  K_q(T)^n\frac{T^n}{n!}\mathbb{E}\bigg[\sup _{t \in [-r_0,T]} \vert X(t) -Y(t) \vert^{2q} \bigg].
\label{Kontra}
\end{equation}
(Whereby $\Lambda ^n X$ means, that $\Lambda$ is applied $n$-times to $X$.)
\label{KontraLem}
\end{lem}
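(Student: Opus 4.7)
The approach is a Picard-type iteration: reduce (\ref{Kontra}) to the one-step bound
\[
\mathbb{E}\Bigl[\sup_{s \in [-r_0, T']}|\Lambda X(s) - \Lambda Y(s)|^{2q}\Bigr] \leq K_q(T)\int_0^{T'} \mathbb{E}\Bigl[\sup_{r \in [-r_0, s]}|X(r) - Y(r)|^{2q}\Bigr] ds
\]
valid for all $T' \leq T$, and then iterate. Writing $\phi_n(t) := \mathbb{E}[\sup_{s \in [-r_0,t]}|\Lambda^n X(s) - \Lambda^n Y(s)|^{2q}]$, the one-step bound applied to $\Lambda^n X, \Lambda^n Y$ reads $\phi_{n+1}(t) \leq K_q(T)\int_0^t \phi_n(s) ds$; combined with the trivial estimate $\phi_0(s) \leq \phi_0(T)$ a short induction yields $\phi_n(T) \leq K_q(T)^n T^n/n! \cdot \phi_0(T)$, which is (\ref{Kontra}).

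To establish the one-step bound, set $U := \Lambda X$ and $V := \Lambda Y$; since $U_0 = V_0 = \psi$, the $L^p$-initial terms in (H3) vanish. Itô's formula applied to $|U(t) - V(t)|^2$ combined with (H3) for $b$ and $\sigma$ yields, for every $t \leq T$,
\[
|U(t) - V(t)|^2 \leq 2\beta(T)\int_0^t\Bigl(\|U_s - V_s\|_\infty^2 + \mathbb{W}_2(\mathcal{L}_{X_s}, \mathcal{L}_{Y_s})^2\Bigr) ds + M(t),
\]
where $M$ is the local martingale $M(t) = 2\int_0^t \langle U(s) - V(s),\, (\sigma(s, U_s, \mathcal{L}_{X_s}) - \sigma(s, V_s, \mathcal{L}_{Y_s}))\, dW(s)\rangle$ (localize by a stopping sequence if necessary). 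Taking $\sup_{s \in [-r_0, t]}$, raising to the $q$-th power via $(a+b)^q \leq 2^{q-1}(a^q + b^q)$, applying Hölder to the time integral (to pass from $(\int \cdot\, ds)^q$ to $T^{q-1}\int \cdot^q ds$), taking expectations, and using the Jensen inequality $\mathbb{W}_2(\mathcal{L}_{X_s}, \mathcal{L}_{Y_s})^{2q} \leq \mathbb{E}\|X_s - Y_s\|_\infty^{2q}$ produces the desired driving term in $X - Y$, a linear self-coupling $\int_0^t \mathbb{E}[\sup_{r \leq s}|U(r) - V(r)|^{2q}] ds$, and a martingale contribution $C_q \mathbb{E}[\sup_{s \leq t}|M(s)|^q]$.

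The delicate step is the martingale term. Burkholder--Davis--Gundy yields
\[
\mathbb{E}\Bigl[\sup_{s \leq t}|M(s)|^q\Bigr] \leq C\, \mathbb{E}\Bigl[\sup_{s \leq t}|U(s) - V(s)|^q \Bigl(\int_0^t \|\sigma_U - \sigma_V\|_{\mathrm{HS}}^2\, ds\Bigr)^{q/2}\Bigr],
\]
and Young's inequality $ab \leq \varepsilon a^2 + b^2/(4\varepsilon)$ with $a := \sup|U - V|^q$ splits this into $\varepsilon \mathbb{E}[\sup_{s \leq t}|U - V|^{2q}]$, absorbed on the left for $\varepsilon$ small, plus $C_\varepsilon \mathbb{E}[(\int_0^t \|\sigma_U - \sigma_V\|_{\mathrm{HS}}^2\, ds)^q]$, itself bounded via (H3) for $\sigma$ and Hölder by the same type of integral already appearing above. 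Gronwall's lemma in $t$ eliminates the self-coupling and produces the one-step bound with $K_q(T)$ depending only on $T$, $q$, and $\beta(T)$. The principal obstacle is precisely this BDG/Young interplay: a naive bound of the martingale term leaves a right-hand side superlinear in $\sup|U - V|$, which Gronwall cannot close, and the $\varepsilon$-trick is what reduces it to a genuinely linear self-coupling. All moment integrability along the way is guaranteed because $X, Y, \Lambda X, \Lambda Y \in E^q(T)$ by Lemma \ref{ExY}, and (H4) controls $b$ and $\sigma$ on bounded sets.
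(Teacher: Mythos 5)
Your proposal is correct and follows essentially the same route as the paper: Itô's formula applied to $|\Lambda^n X(t)-\Lambda^n Y(t)|^2$, the monotonicity condition (H3) with the initial terms vanishing since both iterates start at $\psi$, BDG plus the Young $\varepsilon$-absorption for the martingale part, Gronwall to close the self-coupling, and iteration of the resulting one-step bound to produce the factorial. The paper compresses all of this into a citation of \cite[Lemma 3.2]{MP} followed by Gronwall, so your write-up simply makes explicit the estimates the paper delegates to that reference.
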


\begin{proof}
For $n \in \mathbb{N}_0$ define $X^{(n)}:=\Lambda^n X$ and $Y^{(n)}:=\Lambda^n Y$. 
Moreover define $\mu _t^{(n)}:=\mathcal{L}_{X^{(n)}_t}$ and $\nu _t^{(n)}:=\mathcal{L}_{Y^{(n)}_t}$,  $t \in [0,T]$.
By the definition of $\Lambda$ we have for $n \geq 1$,
that $X^{(n)}=\Lambda(X^{(n-1)})$ solves
\begin{equation*}
\begin{cases} 
\text{d}X^{(n)}(t)= b(t,X^{(n)}_{t},\mu _t^{(n-1)})\text{d}t+ \sigma(t,X^{(n)}_{t},\mu _t^{(n-1)})\text{d}W(t), \ t \in [ 0,T], \\
X^{(n)}_0=\psi 
\end{cases}
\end{equation*}
and that $Y^{(n)}=\Lambda(Y^{(n-1)})$ solves
\begin{equation*}
\begin{cases} 
\text{d}Y^{(n)}(t)= b(t,Y^{(n)}_{t},\nu _t^{(n-1)})\text{d}t+ \sigma(t,Y^{(n)}_{t},\nu _t^{(n-1)})\text{d}W(t), \ t \in [ 0,T], \\
X^{(n)}_0=\psi .
\end{cases}
\end{equation*}
Applying Itô's formula to $\vert X^{(n)}(t)-Y^{(n)}(t) \vert^2$  and using (H3)
one can prove as in \cite[Lemma 3.2]{MP} that we have for $t \in [0,T]$ and $n \in \mathbb{N}$:
\begin{equation*}
\begin{split}
\mathbb{E}&\bigg[ \sup _{r \in [-r_0,t]} \vert X^{(n)}(r)-Y^{(n)}(r) \vert^{2q} \bigg]
\\ &\leq
2(C_q(T) + \tilde{C}_q(T)) \mathbb{E}\bigg[ \int_0^t \sup_{r \in [-r_0,s]} \vert  X^{(n)}(r)-Y^{(n)}(r) \vert ^{2q}\text{d}s \bigg] \\
&\qquad + 2(C_q(T) + \tilde{C}_q(T)) \mathbb{E}\bigg[\int_0^t \sup_{r \in [-r_0,s]} \vert  X^{(n-1)}(r)-Y^{(n-1)}(r) \vert ^{2q} \text{d}s \bigg],
\end{split}
\end{equation*}
with $C_q$, $\tilde{C}_q: \mathbb{R}_+ \rightarrow \mathbb{R}_+$ non-decreasing.
Now (\ref{Kontra}) follows from Gronwall's Lemma.\newline
\end{proof}

\subsubsection{Proof of Theorem \ref{MT2}}
Now we can prove Theorem \ref{MT2}.
\begin{proof}
(a):
By Lemma \ref{KontraLem} we have for all $q \in \mathbb{N}$ with $q \geq\frac{p}{2}$ and $T>0$
\begin{equation*}
\Vert \Lambda^n X - \Lambda^n Y \Vert _{E^q(T)} \leq \left(\frac{(K_q(T)T)^n}{n!}\right)^{\frac{1}{2q}} \Vert  X -  Y \Vert _{E^q(T)},
\end{equation*}
for all $n \in \mathbb{N}$ and $X,Y \in E^q(T)$.
Thus, by the generalized Banach fixed-point theorem, $\Lambda$ has a unique fixed-point $X \in E^q(T)$.
As discussed above, this means that $X$ is a a solution of (\ref{3}) up to time $T$.
Since $T>0$ was taken arbitrarily and the (pathwise) uniqueness is ensured by (b), this implies that (\ref{3}) has a unique solution up to every time $T>0$.
Hence (\ref{3}) has solution up to infinity.\\
Since $q \in \mathbb{N}$ with $q \geq \frac{p}{2}$ was taken arbitrarily, $X$, fulfills (\ref{qEnd}) for all $T>0$ and $q\in \mathbb{N}$.

(b):
(i):
Let $(X,W)$ and $(Y,W)$  be two weak solutions of (\ref{3}) defined on a stochastic basis $(\Omega,\mathcal{F},(\mathcal{F})_{t \geq -r_0},P)$.
By Itô's formula and (H3) we have for all $t\geq0$
\begin{align*}
&\vert X(t)-Y(t) \vert^2 
\\=& \vert X(0)-Y(0) \vert^2 +
\int_0^t 2 \left\langle b(s,X_s,\mu _s)-b(s,Y_s,\nu _s), X(s)-Y(s) \right \rangle  \text{d}s \\
&+ \int_0^t \Vert \sigma(s,X_s,\mu _s)-\sigma(s,Y_s,\nu _s) \Vert_{\text{HS}}^2 \text{d}s\\
&+2 \int_0^t \left\langle   X(s)-Y(s),\left\lbrace\sigma(s,X_s,\mu _s)-\sigma(s,Y_s,\nu _s)\right\rbrace \text{d}W(s) \right\rangle \\
\leq & \Vert X_0-Y_0 \Vert^2_{\infty}
+2\beta (t)\Vert X_0 -Y_0 \Vert_{L^p}^p+
2\beta (t) \int_0^t \Vert Y_s-Y_s \Vert _{\infty}^2
+ \mathbb{W}_2(\mu _s, \nu _s)^2 \text{d}s 
\\
&+2 \sup_{r \in [0,t]} \bigg\vert  \int_0^r \left\langle  X(s)-Y(s),\left\lbrace\sigma(s,X_s,\mu _s)-\sigma(s,Y_s,\nu _s)\right\rbrace  \text{d}W(s) \right\rangle \bigg\vert,
\end{align*}
where $\mu_t:= \mathcal{L}_{X_t}$ and $\nu_t:= \mathcal{L}_{Y_t}$, $t\geq0$. \\
Obviously this estimate is also true if $t \in [0,-r_0]$. 
Thus
\begin{align*}
&\sup _{r \in [-r_0,t]} \vert X(r)-Y(r) \vert^2 \\
\leq & \Vert X_0-Y_0 \Vert^2_{\infty}
+2\beta (t)\Vert X_0 -Y_0 \Vert_{L^p}^p+
2\beta (t) \int_0^{t \vee 0} \Vert Y_s-Y_s \Vert _{\infty}^2
+ \mathbb{W}_2(\mu _s, \nu _s)^2 \text{d}s \\
&+2 \sup_{r \in [0,t \vee 0]} \bigg\vert  \int_0^r \left\langle  X(s)-Y(s),\left\lbrace\sigma(s,X_s,\mu _s)-\sigma(s,Y_s,\nu _s)\right\rbrace   \text{d}W(s) \right\rangle \bigg\vert.
\end{align*}
By the BDG, Young's inequality and (H3) and we have for all $\epsilon \in (0,1)$ and $t\geq0$
\begin{align*}
&2\mathbb{E}\bigg[\sup _{r \in [0,t]} \bigg\vert  \int_0^r \left\langle  X(s)-Y(s),\left\lbrace\sigma(s,X_s,\mu _s)-\sigma(s,Y_s,\nu _s)\right\rbrace  \text{d}W(s) \right\rangle \bigg\vert\bigg] \\
\leq &6 \mathbb{E} \bigg[\bigg(\int_0^t\vert X(s)-Y(s) \vert^{2} 
\Vert \sigma(s,X_s,\mu _s)-\sigma(s,Y_s,\nu _s) \Vert_{\text{HS}}^2 \text{d}s \bigg)^{\frac{1}{2}} \bigg] \\
\leq &6 \mathbb{E} \bigg[ \sup _{s \in [0,t]} \vert X(s)-Y(s) \vert
\bigg(\int_0^t \Vert \sigma(s,X_s,\mu _s)-\sigma(s,Y_s,\nu _s) \Vert_{\text{HS}}^2\bigg)^{\frac{1}{2}} \bigg] \\
\leq &\epsilon \mathbb{E} \bigg[ \sup _{s \in [0,t]} \vert X(s)-Y(s) \vert^{2} \bigg]
+\frac{6}{\epsilon}\mathbb{E}\bigg[\bigg(\int_0^t \Vert \sigma(s,X_s,\mu _s)-\sigma(s,Y_s,\nu _s)\Vert_{\text{HS}}^2 \text{d}s\bigg)   \bigg] \\
\leq & \epsilon \mathbb{E} \bigg[ \sup _{s \in [-r_0,t]} \vert X(s)-Y(s) \vert^{2} \bigg] 
\\& \qquad +\frac{6\beta(t)}{\epsilon}\mathbb{E}\bigg[\int_0^t \Vert X_s -Y_s \Vert_{\infty}^2
+ \mathbb{W}_2(\mu _s, \nu _s)^{2} \text{d}s
+ \Vert X_0 -Y_0 \Vert_{L^p}^p     \bigg].
\end{align*}
Thus, using $\mathbb{W}_2(\mu _r, \nu _r) ^2 \leq \mathbb{E}\left[\Vert X_s - Y_s \Vert_{\infty}^2\right]\leq
\mathbb{E}\left[\sup _{r \in [-r_0,s]}\vert X(r)-Y(r) \vert ^2 \right]$,
\begin{align*}
&\mathbb{E}\bigg[ \sup_{s \in [-r_0,t]}\vert X(s)-Y(s) \vert^2 \bigg] \\
\leq& \mathbb{E}\left[ \Vert X_0 -Y_0 \Vert_{\infty}^2\right]
+2\beta (t)\left(\frac{\epsilon +3}{\epsilon}\right)\mathbb{E}\left[\Vert X_0 -Y_0 \Vert_{L^p}^p\right]
+\epsilon \mathbb{E} \bigg[ \sup _{s \in [-r_0,t]} \vert X(s)-Y(s) \vert^{2} \bigg]\\
&+4\beta(t)\left(\frac{\epsilon +3}{\epsilon} \right)\int_0^t \mathbb{E}\bigg[ \sup_{r \in [-r_0,s]}\vert X(r)-Y(r) \vert^2 \bigg]\text{d}s.
\end{align*}
Thus
\begin{align*}
\mathbb{E}\bigg[ \sup_{s \in [-r_0,t]}\vert X(s)-Y(s) \vert^2 \bigg]
\leq& \frac{\mathbb{E}\left[ \Vert X_0 -Y_0 \Vert_{\infty}^2\right]}{1-\epsilon}
+2\beta (t)\left(\frac{\epsilon +3}{(1-\epsilon)\epsilon}\right)\mathbb{E}\left[\Vert X_0 -Y_0 \Vert_{L^p}^p\right]
\\
&+4\beta(t)\bigg(\frac{\epsilon +3}{(1-\epsilon)\epsilon} \bigg)\int_0^t \mathbb{E}\bigg[ \sup_{r \in [-r_0,s]}\vert X(r)-Y(r) \vert^2 \bigg]\text{d}s.
\end{align*}
Hence, by Gronwall,
\begin{align*}
\mathbb{E}\bigg[ \sup_{s \in [-r_0,t]}\vert X(s)-Y(s) \vert^2 \bigg]
\leq &\bigg(\frac{\mathbb{E}\left[ \Vert X_0 -Y_0 \Vert_{\infty}^2\right]}{1-\epsilon}+2\beta (t)\left(\frac{\epsilon +3}{(1-\epsilon)\epsilon}\right)\\
&\hspace{1cm}\cdot \mathbb{E}\left[\Vert X_0 -Y_0 \Vert_{L^p}^p\right]\bigg) \exp\bigg(4\beta(t)\left(\frac{\epsilon +3}{(1-\epsilon)\epsilon} \right)t\bigg).
\end{align*}
Since $\epsilon \in (0,1)$ was chosen arbitrarily, \eqref{EindAb} follows. \\

\noindent
(ii):
By Itô's formula, (H2) and (H4) we have for all $ t\geq 0$
\begin{align*}
& \vert X(t) \vert^2 
\\=& \vert X(0) \vert^2 + \int_0^t 2 \left\langle b(s,X_s,\mu _s), X(s) \right \rangle + \Vert \sigma(s,X_s,\mu _s) \Vert_{\text{HS}}^2 \text{d}s 
\\ &\hspace{32pt} +2 \int_0^t \left\langle   X(s),\sigma(s,X_s,\mu _s)\text{d}W(s) \right\rangle \\
\leq&  \Vert X_0 \Vert^2_{\infty} -\frac{1}{2}\int_0^{t} \vert X(s) \vert ^p \text{d}s +
(\alpha (t)+ \gamma (t)) \int_0^t \left(1+ \Vert X_s \Vert _{\infty}^2
+ \mu _s\left(\Vert \cdot \Vert^2 _{\infty}\right) \right)\text{d}s 
\\ &\hspace{32pt} +\alpha (t)\Vert X_0\Vert_{L^p}^p +2 \sup_{r \in [0,t]} \bigg\vert  \int_0^r \left\langle  X(s),\sigma(s,X_s,\mu _s)  \text{d}W(s) \right\rangle \bigg\vert,
\end{align*}
where $\mu _t:= \mathcal{L}_{X_t}$, $t\geq 0$.
Obviously this estimate also holds true for $t \in [-r_0,0]$.
By the BDG, Young's inequality and (H4) one can prove in the same way as in (i) that
\begin{align*}
2 \mathbb{E}\bigg[\sup _{r \in [0,t]}& \bigg\vert  \int_0^r \left\langle  X(s),\sigma(s,X_s,\mu _s)   \text{d}W(s) \right\rangle \bigg\vert \bigg] \\
&\leq  \frac{1}{2} \mathbb{E}\bigg[\sup _{r \in [-r_0,t]} \vert X(r) \vert^2 \bigg]
+\gamma (t) \int_0^t\left(1+ \Vert X_s \Vert _{\infty}^2
+ \mu _s\left(\Vert \cdot \Vert^2 _{\infty}\right) \right)\text{d}s.
\end{align*}
Now fix $T>0$.
Taking expectation, the two estimates above imply for all $t \in [0,T]$
\begin{align*}
\mathbb{E}\bigg[\sup _{r \in [-r_0,t]}\vert X(r) \vert^2 \bigg]
\leq \mathbb{E}&\left[\Vert X_0 \Vert^2_{\infty}\right]
+\alpha (T)\mathbb{E}\left[\Vert X_0  \Vert_{L^p}^p\right]
\\& +
(\alpha (T)+ 2\gamma (T)) \int_0^{t } \left(1+ \mathbb{E}\left[\Vert X_s \Vert _{\infty}^2\right]
+ \mu _s\left(\Vert \cdot \Vert^2 _{\infty}\right) \right) \text{d}s\\
&-\frac{1}{2}\int_0^t \mathbb{E}\left[\vert X(s) \vert ^p \right]\text{d}s
+ \frac{1}{2}\mathbb{E}\bigg[\sup _{r \in [-r_0,t]} \vert X(r) \vert^2 \bigg].
\end{align*}
Thus, using that $\mu _s\left(\Vert \cdot \Vert^2 _{\infty}\right) =\mathbb{E}\left[\Vert X_s \Vert^2_{\infty}\right]$ for all $s \geq 0$ by the general transformation rule,
\begin{align*}
&\mathbb{E}\bigg[\sup _{r \in [-r_0,t]}\vert X(r) \vert^2 \bigg]
+\int_0^t \mathbb{E}\left[\vert X(s) \vert ^p \right]\text{d}s \\
\leq & 2\mathbb{E}\left[\Vert X_0 \Vert^2_{\infty}\right]
+2 \alpha (T)\mathbb{E}\left[\Vert X_0  \Vert_{L^p}^p\right]
+2(\alpha (T)+ 2\gamma (T)) \int_0^{t } \left(1+ 2\mathbb{E}\left[\Vert X_s \Vert _{\infty}^2\right] \right)\text{d}s \\
\leq & 2\mathbb{E}\left[\Vert X_0 \Vert^2_{\infty}\right]
+2 \alpha (T)\mathbb{E}\left[\Vert X_0  \Vert_{L^p}^p\right]
\\&\qquad +2(\alpha (T)+ 2\gamma (T)) \int_0^{t } \bigg(1+ 2\mathbb{E}\bigg[\sup _{r \in [-r_0,s]} \Vert X(r) \Vert _{\infty}^2\bigg]
+ \int_0^s  \mathbb{E}\left[\vert X(r) \vert ^p \right]\text{d}r  \bigg)\text{d}s.
\end{align*}
Thus, by the Gronwall lemma, there exists $H: \mathbb{R}_+ \rightarrow \mathbb{R}_+$ non-decreasing such that
\begin{equation*}
\mathbb{E}\bigg[\sup _{r \in [-r_0,T]}\vert X(r) \vert^2 
+\int_0^T \vert X(s) \vert ^p \text{d}s \bigg] 
\leq H(T)\left(1 + \mathbb{E}\left[\Vert X_0 \Vert^2_{\infty}\right] 
+\mathbb{E}\left[\Vert X_0  \Vert_{L^p}^p\right]\right).
\end{equation*}
Since $T$ was taken arbitrarily, this estimate holds for all $T>0$. \\

\noindent
(c): Same as in \cite[Theorem 3.1 (3)]{MP}.
\end{proof}

\begin{rem}
Let $\psi \in \mathcal{C}$ be arbitrary.
Define $X^{(0)}(t):=\psi (t \wedge 0)$, $t\geq -r_0$ and $X^{(n)}:=\Lambda X^{(n-1)}$, $n\in \mathbb{N}$, whereby $\Lambda$ is defined as before.
By the definition of $\Lambda$, $X^{(n)}$ solves
\begin{align*}
\begin{cases}
&\text{d}X^{(n)}(t)=b(t,X^{(n)}_t,\mathcal{L}_{X^{(n-1)}_t})\text{d}t
+\sigma(t,X^{(n)}_t,\mathcal{L}_{X^{(n-1)}_t})\text{d}W(t) \\
&X^{(n)}_0=\psi.
\end{cases}
\end{align*}
As we know from the Banach fixed point theorem and Lemma \ref{KontraLem},
$X^{(n)}=\Lambda^{n}X^{(0)} \rightarrow X$ in $E^q(T)$ as $n \rightarrow \infty$, for all $T>0$.
In \cite{MP} the authors prove the convergence of the $X^{(n)}$ directly and show that the limit is a solution to (\ref{3}).
Therefore the \textit{iteration in distribution}, which is used in \cite{MP}, is contained in our proof.
\end{rem}

\section{Distribution-Dependent SDEs with delay in infinite dimensions}
The goal of this chapter is to obtain a result for the existence and uniqueness of solutions of distribution-dependent SDE's with delay in infinite dimensions.
We achieve this by following the idea of \cite[Chapter 4]{Liu2015}, i.e. we approximate with solutions of finite dimensional distribution-dependent SDE's with delay (Galerkin approximation).\\

Throughout this chapter we fix a Gelfand triple $(V,H,V^*)$,
a stochastic basis \\ $(\Omega,  \mathcal{F}, ( {\mathcal{F}}_t)_{t \geq -r_0},  P)$,
 $r_0>0$, $T>0$, $p \geq 2$ and $p^* := \frac{p}{p-1}$. \\
Since
\begin{align*}
V \subset H  \subset V^*
\end{align*}
continuous and densely, we have
\begin{align*}
L^p_V \subset L^2_H  \subset L^{p^*}_{V^*}
\end{align*}
continuous and densely. 
By Kuratowski's theorem we have $L^p_V \in\mathcal{B}(L^2_H)$, $L^2_H \in \mathcal{B}(L^{p^*}_{V^*})$ and
$\mathcal{B}(L^p_V)=\mathcal{B}(L^2_H) \cap L^p_V$, $\mathcal{B}(L^2_H)=\mathcal{B}(L^{p^*}_{V^*}) \cap L^2_H$.
Hence
\begin{align*}
\mathcal{P}_p \left( L^p_V \right) \subset \mathcal{P}_2 \left( L^2_H \right)  \subset \mathcal{P}_{p^*} \big(L^{p^*}_{V^*} \big),
\end{align*}
continuously.
Therefore we can define $\mathcal{P}_2(\mathcal{C}(H)) \cap \mathcal{P}_p(L^p_V)$ and $C\left([0,T];\mathcal{P}_2(\mathcal{C}(H))\right)
\cap L^p\left([0,T];\mathcal{P}_p\left(L^{p}_V\right)\right)$ as in section 1.1.
The aim of this chapter is to solve the following path-distribution dependent SDE on H:
\begin{equation}
\text{d}X(t)= A(t,X_t,\mathcal{L}_{X_t})\text{d}t+ B(t,X_t,\mathcal{L}_{X_t})\text{d}W(t),
\label{7}
\end{equation}
with $W=(W(t))_{t\in [0,T]}$, a cylindrical $Q$-Wiener process with $Q=I$,
defined on another separable Hilbert space $(U,\langle \cdot, \cdot \rangle_U)$ and with $B$ taking values in $L_2(U,H)$, but with A taking values in the larger space $V^*$. \\
By our definition of solution (see below), X will, however, take values in $H$ again. \\

\subsection{Conditions on the coefficients and main result}
In this section, the conditions on the coefficients and the main result are presented.\\
For the rest of this chapter let $\mathbb{W}_2:=\mathbb{W}_2^{L^2_H}$. \\
Throughout the rest of this chapter, we assume that $A$ and $B$ fulfill the following conditions:
\begin{itemize}
\item[(H1)](Continuity)
\begin{align*}
&A \colon [0,T] \times \left(\mathcal{C}(H) \cap L^p_V  \right) 
\times \left( \mathcal{P}_2 \left(  \mathcal{C}(H) \right) \cap  \mathcal{P}_p \left( L^p_V \right) \right)  
\mapsto V^*; \\
&B \colon [0,T] \times \left(\mathcal{C}(H) \cap L^p_V  \right) 
\times \left( \mathcal{P}_2 \left(  \mathcal{C}(H) \right) \cap  \mathcal{P}_p \left( L^p_V \right) \right)   \mapsto L_2(U,H)
\end{align*}
are $\mathcal{B}([0,T]) \otimes \mathcal{B}\left( \mathcal{C}(H) \cap  L^p_V \right)  \otimes \mathcal{B}\left(\mathcal{P}_2 ( \mathcal{C}(H) ) \cap \mathcal{P}_p ( L^p_V) \right)$-measurable.
In addition for all $t \in [0,T]$ and $v \in V$ and $u \in U$ the maps
\[
\left( \mathcal{C}(H) \cap  L^p_V \right) 
\times \left(\mathcal{P}_2 \left( \mathcal{C}(H) \right)
\cap \mathcal{P}_p \left( L^p_V \right)   \right)   \ni (\xi,\mu) 
\mapsto {}_{V^*} \langle  A(t, \xi,\mu),v \rangle _V
\]
and
\[
\left( \mathcal{C}(H) \cap  L^p_V \right) 
\times \left(\mathcal{P}_2 \left( \mathcal{C}(H) \right)
\cap \mathcal{P}_p \left( L^p_V \right)   \right)
  \ni (\xi,\mu) 
\mapsto   B(t, \xi,\mu)u
\]
are continuous.
\item[(H2)](Coercivity)
There exists $\alpha \geq 0$  such that
\begin{align*}
\int_0^t &e^{-\lambda s} \left(  2{}_{V^*} \langle  A(s, \xi _s,\mu _s), \xi (s) \rangle _V + \Vert  B(s, \xi _s,\mu _s) \Vert _{L_2(U,H)}^2 \right) \text{d}s \\
&\leq  \alpha\int_0^t e^{-\lambda s}
\left(  1+ \Vert  \xi _s \Vert _{L^2_H}^2 + \mu _s \left(\Vert  \cdot \Vert _{L^2_H}^2 \right)\right)\text{d}s
  - \frac{1}{2} \int_0^t e^{-\lambda s}\Vert \xi (s) \Vert _V^p \text{d}s,
\end{align*}
for all $\lambda \geq 0$, $t \in [0,T]$, $\xi \in C([-r_0,T];H)\cap L^p([-r_0,T];V)  $ and $\mu \in C\left([0,T];\mathcal{P}_2(\mathcal{C}(H))\right) \cap L^p\left([0,T];\mathcal{P}_p\left(L^{p}\right)\right)$.

\item[(H3)] (Monotonicity)
There exists $ \beta  \geq 0$ such that
\begin{align*}
\int_0^t &e^{-\lambda s}  2{}_{V^*} \langle  A(s, \xi _s,\mu _s) -A(s, \eta _s,\nu _s),\xi (s)-\eta (s) \rangle _V 
\\ &\leq  \beta \int_0^t   e^{-\lambda s}
\left( \Vert  \xi _s- \eta _s \Vert _{L^2_H}^2  +  \mathbb{W}_2(\mu _s,\nu _s)^2 \right)\text{d}s
\end{align*}
and
\begin{align*}
\int_0^t &e^{-\lambda s} \Vert   B(s, \xi _s,\mu _s)- B(s, \eta _s,\nu _s) \Vert _{L_2(U,H)}^2 \text{d}s
\\ &\leq \beta  \int_0^t  e^{-\lambda s}
\left( \Vert  \xi _s- \eta _s \Vert _{L^2_H}^2  +  \mathbb{W}_2(\mu _s,\nu _s)^2 \right)\text{d}s,
\end{align*}
for all $\lambda \geq 0$, $t \in [0,T]$, $\xi, \eta \in C([-r_0,T];H)\cap L^p([-r_0,T];V)  $ and 
$\mu, \nu \in C\left([0,T];\mathcal{P}_2(\mathcal{C}(H))\right)
\cap L^p\left([0,T];\mathcal{P}_p\left(L^{p}\right)\right)$.
\item[(H4)](Growth)
For all $v \in V$ ${}_{V^*} \langle  A(\cdot, \cdot,\cdot),v \rangle _V$ is bounded on bounded sets in
$[0,T]\times\left(\mathcal{C}(H) \cap L^p_V  \right) 
\times \left( \mathcal{P}_2 \left(  \mathcal{C}(H) \right) \cap  \mathcal{P}_p \left( L^p_V \right) \right)  $.
Moreover there exist $\gamma \geq 0$ such that
\begin{align*}
\int_0^t \Vert A(s,\xi _s,\mu _s) \Vert _{V^*}^{\frac{p}{p-1}}  \text{d}s
 \leq \gamma \int_0^t \Big(  1+ \Vert \xi(s) \Vert _V ^p  +  \mu_s \big(  \Vert  \cdot \Vert _{L^p_V}^{p} \big) \Big) \text{d}s
+\gamma  \Vert  \xi _0 \Vert _{L^p_V}^{p}
\end{align*}
and
\begin{align*}
\Vert B(t,\xi _t,\mu _t) \Vert _{L_2(U,H)}^2 
\leq \gamma \Big(1+ \Vert  \xi _t \Vert _{L^2_H}^{2} + \mu _t  \big( \Vert \cdot \Vert _{L^2_H}^2 \big)  \Big) ,
\end{align*}
for all $\lambda \geq 0$, $t \in [0,T]$, $\xi \in C([-r_0,T];H)\cap L^p([-r_0,T];V) $ and
$\mu \in C\left([0,T];\mathcal{P}_2(\mathcal{C}(H))\right)\cap L^p\left([0,T];\mathcal{P}_p\left(L^{p}\right)\right)$.
\end{itemize}

Let us briefly comment on these conditions.
As we will see in Lemma \ref{Xnex3}, (H1)-(H4) were chosen in such the way, that in the case $V=H=V^*=\mathbb{R}^d$ for some $d \in \mathbb{N}$, they imply (H1)-(H4) in Chapter 2.
The factor $e^{-\lambda s}$ is necessary, because in order to prove our main result, Itô's product rule will be applied to a term of the form $e^{-\lambda t}\Vert X(t)\Vert _H^2$ so that the factor $e^{-\lambda s}$ will appear under an integral on the right hand-side, which we want to estimate with our conditions. \\
Note that the measurability of $s \mapsto A(s,\xi _s,\mu _s)$ and $s \mapsto B(s,\xi _s,\mu _s)$ is ensured by (H1) and the assumptions on $\xi$ and $\mu$.
The existence of all integrals, which appear in the conditions, is ensured by (H4). \\

Next we define precisely what a solution of (\ref{7}) is.
\begin{defn}
A continuous $H$-valued process, $( {\mathcal{F}}_t)_{t \in [-r_0,T]}$-adapted\linebreak $(X(t))_{t \in [-r_0,T]}$ is called a \textit{solution} of (\ref{7}),
if it has the following properties:
\begin{itemize}
\item[(i)]
$\mathbb{E}\left[\Vert X_t \Vert_{\mathcal{C}(H)}^2\right]< \infty$ for all $t \in [0,T]$;
\item[(ii)]
For its $\text{d}t \otimes P$-equivalence class $\widehat{X}$ we have
$\widehat{X} \in  L^p ([-r_0,T] \times \Omega,\text{d}t \otimes P ;V)$
(Whereby the $\text{d}t \otimes P$-equivalence class $\widehat{X}$ of $X$ consists of all
$\tilde{X}:[-r_0,T]\times \Omega  \rightarrow V^*$, $\mathcal{B}([0,T])\otimes \mathcal{F}/\mathcal{B}(V^*)$-measurable such that $X=\tilde{X}$ $\text{d}t \otimes P$-a.e.)
\item[(iii)]
\begin{equation}
X(t)=X(0)+ \int_{0}^t  A(s,\bar{X}_s,\mathcal{L}_{\bar{X}_s})\text{d}s
+\int_{0}^t B(s,\bar{X}_s,\mathcal{L}_{\bar{X}_s})\text{d}W(s),
\label{SGl}
\end{equation}
for every $t \in [0,T]$ $P$-a.s., where $(\bar{X}_t)_{t \in [0,T]}$ is a progressively measurable, $\mathcal{C}(H)\cap L^p_V$-valued version
(Recall that $(\bar{X}_t)_{t\in[0,T]}$ is $\text{d}t \otimes P$-version of $(X_t)_{t\in[0,T]}$, if $\tilde{X}_t(\omega)=X_t(\omega)$ for $\text{d}t \otimes P$-a.e. $(t,\omega)\in [0,T]\times \Omega$.)
of the $\mathcal{C}(H)$-valued process $({X}_t)_{t \in [0,T]}$, with the property that $(\bar{X}(t))_{t \in [0,T]}:=(\bar{X}_t(0))_{t \in [0,T]}$ is a $V$-valued, progressively measurable, $\text{d}t \otimes P$-version of $(X(t))_{t\in [0,T]}$.
\end{itemize}
\label{DS}
\end{defn}
\begin{rem}
\begin{itemize}
\item[(a)]
Just like in Remark \ref{StetRem}, Definition \ref{DS}(i) implies that
$\mathbb{E}\left[\sup _{t \in [-r_0,T]} \Vert X(t) \Vert_H^2 \right] =\mathbb{E}\left[\sup _{t \in [0,T]} \Vert X_t \Vert_{\mathcal{C}(H)}^2 \right]<\infty$
and thereby that $[0,T]\ni t \mapsto \mathcal{L}_{X_t} \in \mathcal{P}_2\left(\mathcal{C}(H)\right)$ is continuous.
Hence,
\begin{align*}
 [0,T]\ni t \mapsto \mathcal{L}_{\bar {X}_t} \in \mathcal{P}_2\left(\mathcal{C}(H)\right)\cap \mathcal{P}_p\left(L^p_V\right)
\end{align*}
is $\mathcal{B}([0,T])/\mathcal{B}\left(\mathcal{P}_2\left(\mathcal{C}(H)\right)\cap \mathcal{P}_p\left(L^p_V\right)\right)$-measurable and $\mathcal{L}_{\bar {X}_t}=\mathcal{L}_{ {X}_t}$ for all $t\in [0,T]$.
In particular $ A(\cdot,\bar{X}_{\cdot},\mathcal{L}_{\bar{X}_{\cdot}})$ and $ B(\cdot,\bar{X}_{\cdot},\mathcal{L}_{\bar{X}_{\cdot}})$ are progressively measurable.
\item[(b)]
By (H4) we have
\begin{align*}
\mathbb{E}\bigg[\int_{0}^T \Vert  A(s,\bar{X}_s,\mathcal{L}_{\bar{X}_s}) \Vert^{\frac{p}{p-1}}_{V^*}
+ \Vert B(s,\bar{X}_s,\mathcal{L}_{\bar{X}_s}) \Vert_{\text{HS}}^2 \text{d}s \bigg]<\infty.
\end{align*}
This together with (a) and (b) implies that the right-hand site of (\ref{SGl}) is well-defined.
\end{itemize}
\label{RemMb}
\end{rem}
The next theorem is the main result of this chapter.
\begin{thm}
Let $A,B$ as above satisfying (H1)-(H4) and let $\psi \in \mathcal{C}(H) \cap L^P_V$.
Then there exists a unique solution $X$ to (\ref{7}) in the sense of the definition above which satisfies the initial condition $X_0=\psi$.
Moreover 
\[
\mathbb{E}\Big[\sup_{t \in [-r_0,T]} \Vert X(t) \Vert _H ^2 \Big] < \infty.
\]
\label{MT3}
\end{thm}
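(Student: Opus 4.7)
Plan: The approach parallels the strategy of Chapter 2, but with the variational SPDE machinery of \cite[Chapter 4]{Liu2015} inserted at the inner level. Fix $\psi \in \mathcal{C}(H) \cap L^p_V$. The plan is, first, to solve, for each sufficiently integrable $H$-valued driving process $X$ with $\mu_t := \mathcal{L}_{X_t}$ frozen, the delay SPDE
\begin{equation*}
dY(t) = A(t, Y_t, \mu_t)\,dt + B(t, Y_t, \mu_t)\,dW(t), \quad Y_0 = \psi,
\end{equation*}
by Galerkin approximation; then to run a Banach fixed-point argument on the induced map $\Lambda: X \mapsto Y$ in the space $E(T) := \{X \in L^2(\Omega; C([-r_0,T]; H)) : X \text{ is } (\mathcal{F}_t)\text{-adapted},\ X_0 = \psi\}$.

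Stage 1 (Galerkin for the auxiliary SPDE). With $\mu$ treated as a fixed deterministic input, set $\bar A(t,\xi) := A(t,\xi,\mu_t)$ and $\bar B(t,\xi) := B(t,\xi,\mu_t)$; conditions (H1)-(H4) then reduce to standard variational/delay hypotheses for $(\bar A,\bar B)$. Let $\{e_i\} \subset V$ be an orthonormal basis of $H$, $H_n := \lin\{e_1,\ldots,e_n\}$, and $P_n$ the $H$-orthogonal projection. The projected equation on $H_n \cong \mathbb{R}^n$ falls under Theorem \ref{MT2} and yields a unique process $Y^{(n)}$ starting from $P_n\psi$. Applying It\^o's formula to $e^{-\lambda t}\|Y^{(n)}(t)\|_H^2$ and invoking (H2) gives bounds uniform in $n$ on
\begin{equation*}
\mathbb{E}\sup_{t \in [-r_0,T]}\|Y^{(n)}(t)\|_H^2 \; + \; \mathbb{E}\int_0^T \|Y^{(n)}(s)\|_V^p\,ds,
\end{equation*}
which, together with (H4), control $\bar A(\cdot, Y^{(n)}_\cdot)$ in $L^{p^*}(\Omega\times[0,T]; V^*)$ and $\bar B(\cdot, Y^{(n)}_\cdot)$ in $L^2(\Omega\times[0,T]; L_2(U,H))$. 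Along a subsequence, extract weak limits $Y$, $\tilde A$, $\tilde B$, pass to the limit in the integral equation, and invoke Minty's monotonicity trick, based on the monotonicity part of (H3) applied with $\mu=\nu$, to identify $\tilde A = A(\cdot, \bar Y_\cdot, \mu_\cdot)$ and $\tilde B = B(\cdot, \bar Y_\cdot, \mu_\cdot)$ for a suitable $dt\otimes P$-version $\bar Y$ of $Y$. Uniqueness of the auxiliary problem and the $\sup$-moment estimate follow from the same It\^o/Gronwall calculation. Define $\Lambda X := Y$.

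Stage 2 (Fixed-point argument). For $X^1, X^2 \in E(T)$, apply It\^o's formula to $\|\Lambda X^1(t) - \Lambda X^2(t)\|_H^2$, control the martingale term by BDG and Young, and use (H3) to obtain
\begin{equation*}
\mathbb{E}\sup_{r \le t}\|\Lambda X^1(r) - \Lambda X^2(r)\|_H^2 \le C(T)\int_0^t \mathbb{E}\|\Lambda X^1_s - \Lambda X^2_s\|_{\mathcal{C}(H)}^2\,ds + C(T)\int_0^t \mathbb{W}_2(\mathcal{L}_{X^1_s},\mathcal{L}_{X^2_s})^2\,ds.
\end{equation*}
Gronwall absorbs the first integral, and the bound $\mathbb{W}_2(\mathcal{L}_{X^1_s},\mathcal{L}_{X^2_s})^2 \le \mathbb{E}\|X^1_s - X^2_s\|_{\mathcal{C}(H)}^2$ yields a Lipschitz-type estimate $\|\Lambda X^1 - \Lambda X^2\|_{E(T)}^2 \le C'(T)\int_0^T \mathbb{E}\|X^1_s - X^2_s\|_{\mathcal{C}(H)}^2\,ds$. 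Iterating exactly as in Lemma \ref{KontraLem} produces $\|\Lambda^n X^1 - \Lambda^n X^2\|_{E(T)}^2 \le \tfrac{(C'(T)T)^n}{n!}\|X^1 - X^2\|_{E(T)}^2$, so the generalized Banach fixed-point theorem yields a unique fixed point $X = \Lambda X$, which is the required solution of \eqref{7}; the sup-moment bound transfers directly from Stage 1.

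The main obstacle will be the Minty identification step in Stage 1: the variational argument of \cite[Chapter 4]{Liu2015} must be adapted so that the delay segment $\xi_s$ replaces the instantaneous value $\xi(s)$ and so that the frozen input law $\mu$ is handled consistently under the Galerkin projections. The exponential weight $e^{-\lambda s}$ in (H2)-(H4) is engineered precisely for this step: a sufficiently large choice of $\lambda$ lets one dominate the cross terms produced by the delay when applying It\^o's product rule to $e^{-\lambda t}\|Y^{(n)}(t)\|_H^2$, thereby closing the monotonicity estimate that is the heart of Minty's trick.
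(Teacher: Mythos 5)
Your architecture is genuinely different from the paper's. The paper does \emph{not} freeze the measure and then iterate at the infinite-dimensional level: it performs the Galerkin projection directly on the distribution-dependent equation, so that each finite-dimensional approximant $X^{(n)}$ solves a McKean--Vlasov delay equation on $H_n$ with its \emph{own} law $\mathcal{L}_{X^{(n)}_t}$ as input; these are solved by Theorem \ref{MT2} (Lemma \ref{Xnex3}), uniform bounds are obtained in Lemma \ref{UGL}, and a single Minty-type passage to the limit identifies the drift and diffusion, with the distribution dependence absorbed via $\mathbb{W}_2(\mu^{(n)}_s,\nu_s)^2 \le \mathbb{E}\big[\Vert X^{(n)}_s-\phi_s\Vert^2_{L^2_H}\big]$, Lemma \ref{A1} and the choice $\lambda = 2\beta r_0$. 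Your two-level scheme (Galerkin plus Minty for the frozen-measure delay SPDE, then a Banach fixed point for $\Lambda$) would, if completed, essentially re-prove the infinite-dimensional SFDE result of Röckner--Zhu--Zhu as Stage 1; the paper's route deliberately avoids this by pushing all the McKean--Vlasov structure down to finite dimensions, where Theorem \ref{MT2} already handles it, so that only one limit passage is needed.

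There is, however, a concrete gap in your Stage 2. The coefficients $A$ and $B$ are only defined for measure arguments in $\mathcal{P}_2(\mathcal{C}(H)) \cap \mathcal{P}_p(L^p_V)$, and hypotheses (H2)--(H4) require the frozen input $\mu$ to lie in $C([0,T];\mathcal{P}_2(\mathcal{C}(H))) \cap L^p([0,T];\mathcal{P}_p(L^p_V))$. Your space $E(T) = \{X \in L^2(\Omega; C([-r_0,T];H)) : \dots\}$ carries no $V$-information at all, so for a generic $X \in E(T)$ the expression $A(t,Y_t,\mathcal{L}_{X_t})$ is not even defined and $\Lambda$ is not a self-map of $E(T)$. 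To repair this you must build $\mathbb{E}\int_0^T \Vert X(s)\Vert_V^p\,\mathrm{d}s < \infty$ into the domain; but then a second problem appears: your contraction estimate controls only the $H$-sup norm of $\Lambda X^1 - \Lambda X^2$ (the monotonicity (H3) gives nothing on $\Vert \cdot \Vert_V$ of differences, and the coercivity (H2) is not a statement about differences), so the natural metric for Banach's theorem does not see the $V$-part, and the subset of $E(T)$ with finite $V$-norm is not closed under that metric. This can be salvaged --- restrict $\Lambda$ to a closed ball $\{\mathbb{E}\int_0^T\Vert X\Vert_V^p \le R\}$ determined by the a priori coercivity bound (closed under the $H$-metric by weak lower semicontinuity of the $L^p(V)$-norm along $H$-convergent sequences), and check invariance of that ball --- but none of this is in your proposal, and without it the fixed-point argument does not close. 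The paper sidesteps the issue entirely because its iteration-free, one-shot weak-compactness argument never needs $\Lambda$ to be a self-map of a complete metric space.
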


\subsection{Example: A porous medium type equation}
The following example is similar to \cite[Example 4.1.11.]{RZhu}, but generalized to the delay distribution dependent case.
We consider the following equation:
\begin{equation*}
\text{d}X(t)= \Delta \psi (t,X_t,\mathcal{L}_{X_t})\text{d}t+ \text{d}W(t),
\end{equation*}
whereby $\psi : \mathbb{R}_+ \times   \left(\mathcal{C}(H) \cap L^p_V  \right) 
\times \left( \mathcal{P}_2 \left(  \mathcal{C}(H) \right) \cap  \mathcal{P}_p \left( L^p_V \right) \right) \rightarrow L^{\frac{p}{p-1}}(\Lambda)$,
$\Lambda \subset \mathbb{R}^d$, open and bounded, $p \in [2,\infty[$.
We set $V:=L^p(\Lambda)$ and $H:=\left(H^{1,2}_0(\Lambda)\right)^*$ and recall the following:
\begin{lem}
\label{DualLem}
The map
\begin{align*}
\Delta : H^{1,2}_0(\Lambda) \rightarrow \left( L^p(\Lambda) \right)^*
\end{align*}
extends to a linear isometry
\begin{align*}
\Delta :  L^{\frac{p}{p-1}}(\Lambda) \rightarrow \left( L^p(\Lambda) \right)^*
\end{align*}
and for all $u \in  L^{\frac{p}{p-1}}(\Lambda)$, $v \in  L^p(\Lambda)$
\begin{equation}
\label{Dual}
{}_{V^*} \langle - \Delta u, v \rangle {}_V
= {}_{L^{\frac{p}{p-1}}} \langle u, v \rangle {}_{L^p}
= \int u(x)v(x)\text{d}x.
\end{equation}
\end{lem}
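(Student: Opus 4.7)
The plan is to unpack the Gelfand-triple pairing on the left-hand side of \eqref{Dual}, recognize it as the $L^{p/(p-1)}$--$L^p$ duality on $H^{1,2}_0(\Lambda)$, and then invoke density together with the Riesz representation for $L^p$-spaces to obtain the isometric extension.

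First, I would recall that for a Gelfand triple $V \subset H \subset V^*$ the dualization extends the $H$-inner product, namely ${}_{V^*}\langle z,v\rangle_V = \langle z,v\rangle_H$ whenever $z \in H$ and $v \in V$. Equipping $H^{1,2}_0(\Lambda)$ with the inner product $(u,w) := \int_\Lambda \nabla u \cdot \nabla w\, dx$ (equivalent to the standard one by Poincaré's inequality on the bounded set $\Lambda$), the Riesz isomorphism sends $u \in H^{1,2}_0$ to the functional $-\Delta u \in (H^{1,2}_0)^* = H$ defined by $w \mapsto \int_\Lambda \nabla u \cdot \nabla w\, dx$, and this map is an isometry.

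Next, for $u \in H^{1,2}_0(\Lambda)$ and $v \in V = L^p(\Lambda)$, I would identify the Riesz representative $\tilde v \in H^{1,2}_0$ of $v$ viewed as an element of $H$ via the embedding $V \hookrightarrow H$; it satisfies $(\tilde v, w) = \int_\Lambda v w\, dx$ for all $w \in H^{1,2}_0$, i.e.\ $-\Delta \tilde v = v$ weakly. Then
\[
{}_{V^*}\langle -\Delta u, v\rangle_V = \langle -\Delta u, v\rangle_H = (u,\tilde v) = \int_\Lambda \nabla u \cdot \nabla \tilde v\, dx = \int_\Lambda u(x) v(x)\, dx,
\]
where the last equality is Green's formula, justified by $u \in H^{1,2}_0$ having zero boundary trace. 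Since $p \geq 2$ and $\Lambda$ is bounded, $H^{1,2}_0 \hookrightarrow L^2 \hookrightarrow L^{p/(p-1)}$, so the integral on the right makes sense and this already establishes \eqref{Dual} on the subspace $H^{1,2}_0 \subset L^{p/(p-1)}$.

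Finally, the Riesz representation theorem for $L^p$-spaces furnishes an isometric isomorphism $\Phi: L^{p/(p-1)}(\Lambda) \to (L^p(\Lambda))^*$ given by $\Phi(u)(v) = \int_\Lambda uv\, dx$. The previous step shows $\Phi$ coincides with $-\Delta$ on $H^{1,2}_0$; since $C_c^\infty(\Lambda) \subset H^{1,2}_0$ is dense in $L^{p/(p-1)}(\Lambda)$ and $\Phi$ is continuous, $\Phi$ is the unique continuous extension of $-\Delta: H^{1,2}_0 \to V^*$ to all of $L^{p/(p-1)}(\Lambda)$, which yields both the isometry statement and the formula \eqref{Dual} on the full space. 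The main care required lies in keeping the three identifications — Riesz for $(H^{1,2}_0)^*$, the embedding $V \hookrightarrow H$, and Riesz for $(L^p)^*$ — mutually consistent, so that the chain of equalities in the computation above is unambiguous; all remaining ingredients (Poincaré's inequality, Sobolev embedding on bounded domains, and Green's formula) are classical.
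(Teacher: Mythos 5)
Your proof is correct. The paper itself states this lemma without proof (it is merely recalled as a standard fact from the variational-SPDE literature, cf.\ the porous medium example in \cite{Liu2015}), and your argument --- the Riesz isometry for the Dirichlet inner product on $H^{1,2}_0(\Lambda)$, the Gelfand-triple identity ${}_{V^*}\langle z,v\rangle_V=\langle z,v\rangle_H$ for $z\in H$, $v\in V$, the weak formulation $-\Delta\tilde v=v$ tested against $u$ (which is what your appeal to ``Green's formula'' really amounts to, since $\tilde v$ need not lie in $H^{2}$), and the extension by density of $H^{1,2}_0$ in $L^{p/(p-1)}(\Lambda)$ combined with the $L^{p/(p-1)}$--$(L^p)^*$ Riesz isometry --- is exactly the standard one.
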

Assume that $\psi $ fulfills the following properties:
\begin{itemize}
\item[($\Psi$1)]
$\psi$ is $\mathcal{B}([0,T]) \otimes \mathcal{B}\left( \mathcal{C}(H) \cap  L^p_V \right)  \otimes \mathcal{B}\left(\mathcal{P}_2 ( \mathcal{C}(H) ) \cap \mathcal{P}_p ( L^p_V) \right)$-measurable and
for all $t \in [0,T]$ and $v \in V=L^p(\Lambda)$
the map
\[
\left( \mathcal{C}(H) \cap  L^p_V \right) 
\times \left(\mathcal{P}_2 \left( \mathcal{C}(H) \right)
\cap \mathcal{P}_p \left( L^p_V \right)   \right)   \ni (\xi,\mu) 
\mapsto \int \psi (t,\xi,\mu)(x)v(x)\text{d}x
\]
is continuous.
\item[($\Psi$2)]
There exists $\alpha \geq 0$  such that
\begin{align*}
&\int_0^t e^{-\lambda s}
\left(  2\int \psi (s, \xi _s, \mu _s)(x) \xi (s,x) \text{d}x \right) \text{d}s \\
\geq & -\alpha\int_0^t e^{-\lambda s}
\left(  1+ \Vert  \xi _s \Vert _{L^2_H}^2 + \mu _s \left(\Vert  \cdot \Vert _{L^2_H}^2 \right)\right)\text{d}s
  + \frac{1}{2} \int_0^t e^{-\lambda s}\Vert \xi (s) \Vert _V^p \text{d}s,
\end{align*}
for all $\lambda \geq 0$, $t \in [0,T]$, $\xi \in C([-r_0,T];H)\cap L^p([-r_0,T];V)  $ and $\mu \in C\left([0,T];\mathcal{P}_2(\mathcal{C}(H))\right)
\cap L^p\left([0,T];\mathcal{P}_p\left(L^{p}\right)\right)$.
\item[($\Psi$3)]
There exists $ \beta  \geq 0$ such that
\begin{align*}
&\int_0^t e^{-\lambda s} 
2\Big(  \int \left(\psi (s, \xi _s, \mu _s)(x)-\psi (s, \eta _s, \nu _s)(x)\right) \left(\xi (s,x) - \eta(s,x)\right) \text{d}x\Big)\text{d}s
\geq  0
\end{align*}
for all $\lambda \geq 0$, $t \in [0,T]$, $\xi, \eta \in C([-r_0,T];H)\cap L^p([-r_0,T];V)  $ and 
$\mu, \nu \in C\left([0,T];\mathcal{P}_2(\mathcal{C}(H))\right)
\cap L^p\left([0,T];\mathcal{P}_p\left(L^{p}\right)\right)$.
\item[($\Psi$4)]
For all $v \in V$, $\int \psi(\cdot, \cdot,\cdot)(x) v(x) \text{d}x $ is bounded on bounded sets in
$[0,T]\times\left(\mathcal{C}(H) \cap L^p_V  \right) 
\times \left( \mathcal{P}_2 \left(  \mathcal{C}(H) \right) \cap  \mathcal{P}_p \left( L^p_V \right) \right)  $.
Moreover there exist $\gamma \geq 0$ such that
\begin{align*}
\int_0^t &\Vert \psi(s,\xi _s,\mu _s) \Vert _{L^{\frac{p}{p-1}}(\Lambda)}^{\frac{p}{p-1}}  \text{d}s
 \\ &\leq \gamma \int_0^t \left(  1+ \Vert \xi(s) \Vert _V ^p  +  \mu_s \left(  \Vert  \cdot \Vert _{L^p_V}^{p} \right) \right) \text{d}s
+\gamma  \Vert  \xi _0 \Vert _{L^p_V}^{p}.
\end{align*}

\end{itemize}
Now define
$A \colon [0,T] \times \left(\mathcal{C}(H) \cap L^p_V  \right) 
\times \left( \mathcal{P}_2 \left(  \mathcal{C}(H) \right) \cap  \mathcal{P}_p \left( L^p_V \right) \right)  
\rightarrow V^*=(L^p(\Lambda))^*$ by
$$A(t,\xi,\mu):= \Delta \psi (t, \xi, \mu), \ 
(t,\xi,\mu) \in[0,T] \times \left(\mathcal{C}(H) \cap L^p_V  \right) 
\times \left( \mathcal{P}_2 \left(  \mathcal{C}(H) \right) \cap  \mathcal{P}_p \left( L^p_V \right) \right). $$
By Lemma \ref{DualLem} $A$ is well-defined and really takes values in $V^*$.
By \eqref{Dual} we have for $(t,\xi,\mu) \in [0,T]\times\left(\mathcal{C}(H) \cap L^p_V  \right) 
\times \left( \mathcal{P}_2 \left(  \mathcal{C}(H) \right) \cap  \mathcal{P}_p \left( L^p_V \right) \right)  $
and $v \in V$
\begin{align*}
{}_{V^*} \langle A(t,\xi,\mu), v \rangle _V
= - \int \psi (t,\xi,\mu)(x) v(x) \text{d}x.
\end{align*}
Now it is easy to see that ($\Psi$1)-($\Psi$4) imply (H1)-(H4).
\subsection{Proof of the main result}

Let $\lbrace e_i \mid i \in \mathbb{N}\rbrace  \subset V$ be an orthonormal basis of $H$ such that $\lin \lbrace e_i \mid i \in \mathbb{N}\rbrace$ is dense in $V$.
Define $H_n:=\lin \lbrace e_1 \cdots e_n \rbrace \subset V$
and $\Vert\cdot\Vert_{H_n}:=\Vert\cdot\Vert_{H}$.
Since $H_n$ is a finite dimensional vector space, $\Vert\cdot\Vert_{V}\leq \Vert\cdot\Vert_{V}$, $\Vert\cdot\Vert_{V}\leq \Vert\cdot\Vert_{H}$ and $\Vert\cdot\Vert_{V}\leq \Vert\cdot\Vert_{V^*}$ are equivalent on $H_n$.
Let $P_n \colon V^* \mapsto H_n$ be defined as
\begin{equation*}
P_n y:= \sum _{i=1}^n {}_{V^*} \langle y, e_i \rangle _V e_i, \ y \in V^*.
\end{equation*}
Since ${}_{V^*} \langle y, e_i \rangle _V = \langle y, e_i \rangle _H$ for $y \in H$,
the restriction of $P_n$ to $H$,
denoted by $P_n \vert _H$,
is just the orthogonal projection onto $H_n$ in $H$.
Moreover the following Lemma holds true.
\begin{lem}
Let $P_n$ be as above. Then:
\begin{itemize}
\item[(i)]
${}_{V^*} \langle z, P_n y \rangle _V={}_{V^*} \langle y, P_n z \rangle _V$ for all $y,z \in V^*$,
\item[(ii)]
${}_{V^*} \langle  P_n y,v \rangle _V={}_{V^*} \langle y, P_n v \rangle _V$
for all $y \in V^*$, $v \in V$.
\end{itemize}
\label{PLem}
\end{lem}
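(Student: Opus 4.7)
The plan is to reduce both identities to straightforward coordinate computations by exploiting two standard facts about the Gelfand triple $V \subset H \subset V^*$: first, that $H_n = \lin\{e_1,\dots,e_n\} \subset V$, so $P_n y \in V$ whenever $y \in V^*$; and second, the compatibility ${}_{V^*}\langle h, v\rangle_V = \langle h, v\rangle_H$ for all $h \in H$ and $v \in V$, which is built into the definition of a Gelfand triple.

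For part (i), I would start from the definition $P_n y = \sum_{i=1}^n {}_{V^*}\langle y, e_i\rangle_V\, e_i$, note that this is a finite linear combination of the $e_i \in V$, and pull the scalars out of the dualization on the right to obtain
\begin{equation*}
{}_{V^*}\langle z, P_n y\rangle_V \;=\; \sum_{i=1}^n {}_{V^*}\langle y, e_i\rangle_V \cdot {}_{V^*}\langle z, e_i\rangle_V.
\end{equation*}
The right-hand side is manifestly symmetric under the swap $y \leftrightarrow z$, so performing the same expansion on ${}_{V^*}\langle y, P_n z\rangle_V$ yields exactly the same expression and (i) follows.

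For part (ii), I would use the Gelfand triple compatibility to rewrite both sides as $H$-inner products. Since $P_n y \in H_n \subset H$, we have ${}_{V^*}\langle P_n y, v\rangle_V = \langle P_n y, v\rangle_H$; expanding $P_n y$ in the basis gives
\begin{equation*}
{}_{V^*}\langle P_n y, v\rangle_V \;=\; \sum_{i=1}^n {}_{V^*}\langle y, e_i\rangle_V\,\langle e_i, v\rangle_H.
\end{equation*}
On the other side, since $v \in V \subset H$, we have $P_n v = \sum_{i=1}^n \langle v, e_i\rangle_H\, e_i$, and then linearity of ${}_{V^*}\langle y, \cdot\rangle_V$ gives ${}_{V^*}\langle y, P_n v\rangle_V = \sum_{i=1}^n \langle v, e_i\rangle_H \cdot {}_{V^*}\langle y, e_i\rangle_V$. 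The two expressions coincide by symmetry of the $H$-inner product.

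There is really no obstacle here beyond bookkeeping; the one point to be explicit about is the justification for replacing the $V^*$--$V$ dualization by the $H$-inner product in part (ii), which relies on $P_n y$ lying in the smaller space $H$ rather than merely in $V^*$. Because $P_n$ is defined by the same formula on all of $V^*$ and its image lies in $H_n \subset V$, this switch is always legitimate.
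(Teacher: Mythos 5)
Your proof is correct and complete. The paper states this lemma without any proof, so there is no argument to compare against; your computation --- expanding $P_n$ in the basis and using bilinearity of the dualization for (i), and the Gelfand-triple identity ${}_{V^*}\langle h,v\rangle_V=\langle h,v\rangle_H$ for $h\in H$, $v\in V$ for (ii) (the same fact the paper records just before the lemma when observing that $P_n\vert_H$ is the orthogonal projection onto $H_n$) --- is the standard argument and fills the gap exactly.
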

Let $\lbrace g_i \mid i \in \mathbb{N}\rbrace  $ be an orthonormal basis of $U$ and set
\begin{equation*}
W^{(n)}(t):=\sum _{i=1}^n \langle W(t),g_i \rangle _U g_i.
\end{equation*}
Here we define for $g \in U$
\begin{align*}
\langle W(t),g \rangle _U:=\int_{0}^t  \langle g , \cdot \rangle _U \text{d}W(s), \ t \in(0,T],
\end{align*}
where the stochastic integral is well-defined, since the map $u \mapsto \langle g , u \rangle _U$, $u \in U$, is in $L_2(U,\mathbb{R})$.
By the definition of a $Q$-Wiener process \cite[Chapter 2.5]{Liu2015}, it is clear that $(W^{(n)}(t))_{t\in [0,T]}$ is a $n$-dimensional Brownian motion on $H_n$.
In addition define $U_n:=\lin \lbrace g_1, \cdots,g_n\rbrace$ and let $\tilde{P}_n$ is the orthogonal projection onto $U_n$ in $U$.\\
Now we consider for each $n \in \mathbb{N}$ the following stochastic equation on $H_n$:
\begin{equation}
\label{9}
\begin{cases} 
\text{d}X^{(n)}(t)= P_n A(t,X^{(n)}_t,\mathcal{L}_{X^{(n)}_t})\text{d}t+ P_n B(t,X^{(n)}_t,\mathcal{L}_{X^{(n)}_t})\text{d}W^{(n)}(t), \ t \in [0,T] \\
X^{(n)}_0= P_n \psi.
\end{cases}
\end{equation}
\begin{lem}
Assume (H1)-(H4).
Then, for every $n \in \mathbb{N}$, there exists a continuous, adapted, $H_n$-valued process $X^{(n)}$ which solves (\ref{9}).
\label{Xnex3}
\end{lem}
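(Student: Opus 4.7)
The plan is to view equation \eqref{9} as a classical finite-dimensional DDSDDE on $\mathbb{R}^n$ and to invoke Theorem \ref{MT2} with $d=n$. Fix $n$ and identify $H_n$ with $\mathbb{R}^n$ via the basis $\{e_1,\ldots,e_n\}$ and $U_n$ with $\mathbb{R}^n$ via $\{g_1,\ldots,g_n\}$. On the finite-dimensional space $H_n$ the norms $\|\cdot\|_H$, $\|\cdot\|_V$, $\|\cdot\|_{V^*}$ and the Euclidean norm $|\cdot|$ are all mutually equivalent, with constants depending on $n$. Expressed in the basis $\{g_i\}$, the process $W^{(n)}$ is a standard $n$-dimensional Brownian motion $\beta$ with components $\beta_i(t):=\langle W(t),g_i\rangle_U$.

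Next I would introduce the continuous embedding $\iota\colon \mathcal{C}(H_n) \hookrightarrow \mathcal{C}(H)\cap L^p_V$ acting pointwise on paths, and its pushforward $\iota_\ast$ on measures, which sends $\mathcal{P}_2(\mathcal{C}(H_n))\cap \mathcal{P}_p(L^p(H_n))$ continuously into $\mathcal{P}_2(\mathcal{C}(H))\cap \mathcal{P}_p(L^p_V)$. With this I define
\begin{align*}
b^{(n)}(t,\xi,\mu) &:= P_n A(t,\iota\xi,\iota_\ast\mu), \\
\sigma^{(n)}(t,\xi,\mu)_{i,j} &:= \langle B(t,\iota\xi,\iota_\ast\mu)\,g_j,\,e_i\rangle_H,
\end{align*}
so that, after the basis identifications, \eqref{9} becomes the finite-dimensional DDSDDE
\[
\text{d}X^{(n)}(t)=b^{(n)}(t,X^{(n)}_t,\mathcal{L}_{X^{(n)}_t})\text{d}t+\sigma^{(n)}(t,X^{(n)}_t,\mathcal{L}_{X^{(n)}_t})\text{d}\beta(t),
\]
with initial segment $P_n\psi$.

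The heart of the argument is to verify conditions (H1)--(H4) of Chapter 2 for $(b^{(n)},\sigma^{(n)})$. Continuity (H1) transfers from (H1) of Chapter 3 via continuity of $\iota$ and $\iota_\ast$. For (H2) I would set $\lambda=0$ in Chapter 3's (H2), discard the non-negative $\|B(s,\xi_s,\mu_s)\|^2_{L_2(U,H)}$-term on the left, and exchange $V$-norms for Euclidean ones using $\|\xi(s)\|_V^p\geq c_n|\xi(s)|^p$, together with $\|\xi_s\|_{L^2_H}^2\leq r_0\|\xi_s\|_\infty^2$; a non-negative $\|\xi_0\|_{L^p}^p$-term can freely be added to the right-hand side. (H3) follows from Chapter 3's (H3) with $\lambda=0$ by the same norm-equivalence arguments together with the comparison $\mathbb{W}_2^{L^2_H}(\iota_\ast\mu,\iota_\ast\nu)\leq \sqrt{r_0}\,\mathbb{W}_2^{\mathcal{C}(\mathbb{R}^n)}(\mu,\nu)$. (H4) of Chapter 2 (with $q_0=1$) follows from (H4) of Chapter 3 via norm equivalence, and boundedness of $b^{(n)}$ on bounded sets comes from the hypothesis that each ${}_{V^\ast}\langle A,e_i\rangle_V$ is bounded on bounded sets. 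Lemma \ref{PLem} is used to re-express ${}_{V^\ast}\langle P_n A,v\rangle_V$ whenever needed.

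The main obstacle will be the coercivity constant: Chapter 3's (H2) delivers the coercive term $-\tfrac12\|\xi(s)\|_V^p$, which via norm equivalence yields only $-c_n|\xi(s)|^p$ with possibly $c_n<\tfrac12$. This discrepancy is cosmetic, since the proof of Theorem \ref{MT2} goes through verbatim with any positive coercivity constant in place of $\tfrac12$, or one can absorb the scaling into the non-decreasing function $\alpha$. Once (H1)--(H4) of Chapter 2 are verified for $(b^{(n)},\sigma^{(n)})$, Theorem \ref{MT2}(a) produces a pathwise unique, continuous, adapted $\mathbb{R}^n$-valued solution with initial segment $P_n\psi$, which, reinterpreted on $H_n$, is the claimed solution of \eqref{9}.
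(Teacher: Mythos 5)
Your proposal is correct and follows essentially the same route as the paper: the paper's proof likewise observes that Theorem \ref{MT2} holds on any finite-dimensional space and reduces the lemma to checking that $b:=P_nA$ and $\sigma:=P_nB\tilde P_n$, restricted to $\mathcal{C}(H_n)\times(\mathcal{P}_2(\mathcal{C}(H_n))\cap\mathcal{P}_p(L^p_{H_n}))$, satisfy (H1)--(H4) of Chapter 2, leaving the verification to the reader. You supply the details the paper omits, and your handling of the coercivity constant (noting that the proof of Theorem \ref{MT2} works with any positive constant in place of $\tfrac12$) is the right fix; only the alternative remark about absorbing that deficit into $\alpha$ would not work literally, since the $\alpha$-terms do not control $|\xi(s)|^p$.
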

\begin{proof}
Fix $n \in \mathbb{N}$.
It is obvious that Theorem \ref{MT2} still holds true, if we replace $\mathbb{R}^d$ with and arbitrary, finite dimensional vector space. 
Therefore we have to show, that $b(t,\xi,\mu):=P_n A(t,\xi,\mu)$ and 
$\sigma(t,\xi,\mu):=P_n B(t,\xi,\mu)\tilde{P}_n$, 
$(t,\xi,\mu) \in [0,T]\times\mathcal{C}(H_n)\times\left(\mathcal{P}_2(\mathcal{C}(H_n))\cap\mathcal{P}_p(L^p_{H_n})\right)
\subset [0,T] \times \left(\mathcal{C}(H) \cap L^p_V  \right)\times \left( \mathcal{P}_2 \left(  \mathcal{C}(H) \right) \cap  \mathcal{P}_p \left( L^p_V \right) \right)$, fulfill (H1)-(H4) in Theorem \ref{MT2}.
\end{proof}
The following lemma is crucial for the construction of a solution to (\ref{7}).
But first we fix the following notations:
Let
\begin{align*}
J&:=L^2([0,T] \times \Omega, \text{d}t \otimes P; L_2(U,H)), \\
K&:=L^p([0,T] \times \Omega, \text{d}t \otimes P; V), \\
K^*&:=(L^p([0,T] \times \Omega, \text{d}t \otimes P; V))^* \cong L^{\frac{p}{p-1}}([-r_0,T] \otimes \Omega, \text{d}t \times P; V^*).
\end{align*}
\begin{lem}
Under the assumptions of the main theorem, there exists $C \in ]0, \infty [$ such that
\begin{align}
\begin{split}
\Vert X^{(n)} \Vert _{K} 
+ \Vert A(\cdot,X^{(n)}_{\cdot},\mathcal{L}_{X^{(n)}_{\cdot}}) \Vert _{K^*} 
&+ \Vert B(\cdot,X^{(n)}_{\cdot},\mathcal{L}_{X^{(n)}_{\cdot}}) \Vert _{J} 
\\ &+ \sup _{t \in [-r_0,T]} \mathbb{E}[\Vert X^{(n)}(t) \Vert ^2 _H ]
\leq C
\end{split}
\label{10}
\end{align}
for all $n \in \mathbb{N}$.
\label{UGL}
\end{lem}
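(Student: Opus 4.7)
The plan is to apply Itô's formula to the finite-dimensional process $t \mapsto e^{-\lambda t}\Vert X^{(n)}(t)\Vert_H^2$ for a parameter $\lambda > 0$ to be chosen later, and then exploit the coercivity condition (H2) together with a Gronwall-type argument to extract bounds uniform in $n$. Since $X^{(n)}(t) \in H_n$ and all norms on $H_n$ are equivalent, Itô's formula in the finite-dimensional space applies. By Lemma~\ref{PLem}(ii) the drift pairing simplifies to $\langle P_n A(s,X^{(n)}_s,\mathcal{L}_{X^{(n)}_s}), X^{(n)}(s)\rangle_H = {}_{V^*}\langle A(s,X^{(n)}_s,\mathcal{L}_{X^{(n)}_s}), X^{(n)}(s)\rangle_V$, while the diffusion satisfies $\Vert P_n B(s,X^{(n)}_s,\mathcal{L}_{X^{(n)}_s})\tilde{P}_n\Vert_{L_2(U,H)}^2 \leq \Vert B(s,X^{(n)}_s,\mathcal{L}_{X^{(n)}_s})\Vert_{L_2(U,H)}^2$. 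After a standard localization for the stochastic integral, taking expectations and invoking (H2) pathwise together with the transformation formula $\mathcal{L}_{X^{(n)}_s}(\Vert\cdot\Vert_{L^2_H}^2) = \mathbb{E}[\Vert X^{(n)}_s\Vert_{L^2_H}^2]$ and the bound $\Vert P_n\psi(0)\Vert_H \leq \Vert\psi\Vert_{\mathcal{C}(H)}$, one arrives at an inequality of the schematic form
\begin{equation*}
e^{-\lambda t}\mathbb{E}\Vert X^{(n)}(t)\Vert_H^2 + \tfrac{1}{2}\int_0^t e^{-\lambda s}\mathbb{E}\Vert X^{(n)}(s)\Vert_V^p\,ds + \lambda\int_0^t e^{-\lambda s}\mathbb{E}\Vert X^{(n)}(s)\Vert_H^2\,ds \leq C_0 + 2\alpha\int_0^t e^{-\lambda s}\mathbb{E}\Vert X^{(n)}_s\Vert_{L^2_H}^2\,ds,
\end{equation*}
with $C_0$ depending only on $\alpha, T$ and $\Vert\psi\Vert_{\mathcal{C}(H)}$.

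The key step is then Fubini applied to the segment term: writing $\Vert X^{(n)}_s\Vert_{L^2_H}^2 = \int_{s-r_0}^s \Vert X^{(n)}(r)\Vert_H^2\,dr$ and swapping the order of integration gives an estimate of the form $\int_0^t e^{-\lambda s}\mathbb{E}\Vert X^{(n)}_s\Vert_{L^2_H}^2\,ds \leq r_0\int_0^t e^{-\lambda r}\mathbb{E}\Vert X^{(n)}(r)\Vert_H^2\,dr + C_1$, where the boundary contribution from $r\in[-r_0,0]$ is controlled uniformly in $n$ by $\Vert\psi\Vert_{\mathcal{C}(H)}$, because $X^{(n)}(r)=P_n\psi(r)$ there and $P_n$ is a contraction on $H$. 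Choosing any $\lambda > 2\alpha r_0$ lets us absorb this into the coercive third summand on the left, producing a constant $C$ independent of $n$ with $\sup_{t\in[0,T]}\mathbb{E}\Vert X^{(n)}(t)\Vert_H^2 + \int_0^T \mathbb{E}\Vert X^{(n)}(s)\Vert_V^p\,ds \leq C$. Combined with the fixed initial segment this yields the bounds on $\sup_{t \in [-r_0,T]}\mathbb{E}\Vert X^{(n)}(t)\Vert_H^2$ and on $\Vert X^{(n)}\Vert_K$ in \eqref{10}; the remaining two bounds on $\Vert A(\cdot,X^{(n)}_\cdot,\mathcal{L}_{X^{(n)}_\cdot})\Vert_{K^*}$ and $\Vert B(\cdot,X^{(n)}_\cdot,\mathcal{L}_{X^{(n)}_\cdot})\Vert_J$ then drop out of the growth condition (H4), using the transformation formula once more.

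The main obstacle is precisely this delay-coupled Gronwall step: because the coercivity estimate feeds back a segment norm, a naive iteration would loop at the level of segments. The weight $e^{-\lambda s}$ in (H2)---inserted into the hypothesis exactly with this use in mind, as the author flags in the paragraph following (H1)--(H4)---is what makes the Fubini rearrangement productive, producing the plain integral $\int_0^t e^{-\lambda r}\mathbb{E}\Vert X^{(n)}(r)\Vert_H^2\,dr$ with a harmless factor $r_0$ in front, which for $\lambda$ sufficiently large is absorbed by the coercivity-generated term $\lambda\int_0^t e^{-\lambda s}\mathbb{E}\Vert X^{(n)}(s)\Vert_H^2\,ds$, closing the estimate without any iteration at the segment level.
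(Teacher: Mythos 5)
Your proposal is correct and follows essentially the same route as the paper: finite-dimensional It\^o formula plus the product rule for $e^{-\lambda t}\Vert X^{(n)}(t)\Vert_H^2$, localization, the coercivity condition (H2), the Fubini rearrangement of the segment norm (which is exactly the paper's Lemma~\ref{A1} with $Y=0$, $B=H$, $p=2$), absorption of the resulting $r_0$-weighted integral by choosing $\lambda$ large, and finally (H4) to dispose of the $K^*$- and $J$-bounds. No substantive differences.
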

\begin{proof}
Since $P_n$ is a the orthonorgal projection of $H$ onto $H_n$ it is well known that $\Vert P_n \Vert_{L(H)}\leq 1$.
Hence for $t\in[-r_0,0]$ it is 
$\mathbb{E}[\Vert X^{(n)}(t) \Vert_H^2]=\Vert P_n \psi(t) \Vert_H^2 \leq \Vert  \psi \Vert_{\mathcal{C}(H)}^2$.
Therefore we only have to show
\begin{align*}
\Vert X^{(n)} \Vert _{K_0}
+ \Vert A(\cdot,X^{(n)}_{\cdot},\mathcal{L}_{X^{(n)}_{\cdot}}) \Vert _{K^*} 
&+ \Vert B(\cdot,X^{(n)}_{\cdot},\mathcal{L}_{X^{(n)}_{\cdot}}) \Vert _{J} 
\\ &+ \sup _{t \in [0,T]} \mathbb{E}[\Vert X^{(n)}(t) \Vert ^2 _H ]
\leq C
\end{align*}
for some $C \in ]0, \infty [$. \\
By (H4) it is even enough to show that
\begin{align*}
\Vert X^{(n)} \Vert _{K}
+ \sup _{t \in [0,T]} \mathbb{E}[\Vert X^{(n)}(t) \Vert ^2 _H ]
\leq \tilde{C}
\end{align*}
for some $\tilde{C} \in ]0, \infty [$.
By the finite-dimensional Itô formula and Lemma \ref{PLem} we have $P$-a.s.
\begin{align*}
&\Vert X^{(n)}(t) \Vert ^2 _H
\\ &= \Vert X^{(n)}(0) \Vert ^2 _H 
+\int_{0}^t \Big(2 {}_{V^*} \big \langle P_n A \big(s,X^{(n)}_{s},\mathcal{L}_{X^{(n)}_{s}}\big), X^{(n)}(s) \big \rangle {}_V 
+  \Vert Z^{(n)}(s) \Vert ^2 _{L_2(U_n,H)} \Big) \text{d}s\\
&\qquad + M^{(n)}(t) \\
&=  \Vert X^{(n)}(0) \Vert ^2 _H  +\int_{0}^t \Big(2 {}_{V^*} \big \langle   A \big(s,X^{(n)}_{s},\mathcal{L}_{X^{(n)}_{s}}\big), X^{(n)}(s) \big \rangle {}_V 
+ \Vert Z^{(n)}(s) \Vert ^2 _{L_2(U_n,H)} \Big) \text{d}s \\
&\qquad + M^{(n)}(t),
\end{align*}
for all $t \in [0,T]$, where $Z^{(n)}(s)= P_n B(s,X^{(n)}_{s},\mathcal{L}_{X^{(n)}_{s}})$, 
$U_n=\lin \lbrace g_1, \cdots , g_n \rbrace$ and
\begin{align*}
M^{(n)}(t):=2 \int_{0}^t  \left \langle X^{(n)}(s),P_n B(s,X^{(n)}_{s},\mathcal{L}_{X^{(n)}_{s}}) \text{d}W^{(n)}(s) \right \rangle _H
\ \ t \in [0,T],
\end{align*}
is a local martingale. \\
Let $(\tau _l)_{l \in \mathbb{N}}$ be $(\mathcal{F}_t)$-stopping times such that
$\Vert X^{(n)}(t \wedge \tau _l )(\omega) \Vert ^2 _V$ is uniformly bounded in
$(t,\omega) \in [0,T] \times \Omega$, $M^{(n)}((t \wedge \tau _l)$, $t \in [0,T]$, 
is a martingale for each $l \in \mathbb{N}$ and $\tau _l \uparrow T$ as $l \rightarrow \infty$.
Then for all $l \in \mathbb{N}$,  $t \in [0,T]$
\begin{align*}
&\mathbb{E}\left[\Vert X^{(n)}(t \wedge \tau _l ) \Vert ^2 _H \right] \\
=&\mathbb{E}\left[\Vert X^{(n)}(0) \Vert ^2 _H \right]
\\&\quad +\int_{0}^t \mathbb{E}\left[ 1_{[0, \tau _l]}(s) \left( 2 {}_{V^*} \big \langle  A(s,X^{(n)}_{s},\mathcal{L}_{X^{(n)}_{s}}, X^{(n)}(s) \big \rangle {}_V \right. 
+  \left. \Vert Z^{(n)}(s) \Vert ^2 _{L_2(U_n,H)} \right) \right] \text{d}s.
\end{align*}
Using the product rule, (H3) and Fubini we obtain for $\lambda \geq 0$
\begin{align*}
&\mathbb{E}\left[e^{-\lambda t} \Vert X^{(n)}(t \wedge \tau _l ) \Vert ^2 _H \right]\\
=& \mathbb{E}\left[\Vert X^{(n)}(0) \Vert ^2 _H \right]
- \int_{0}^t \lambda \mathbb{E}\left[\Vert X^{(n)}(s \wedge \tau _l ) \Vert ^2 _H \right] e^{-\lambda s} \text{d}s \\
&+  \mathbb{E}\bigg[ \int_{0}^{t \wedge \tau _l} e^{-\lambda s} \Big( 2 {}_{V^*} \big \langle  A(s,X^{(n)}_{s},\mathcal{L}_{X^{(n)}_{s}}), X^{(n)}(s) \big \rangle {}_V 
\\&\hspace{170pt}+   \Vert B(s,X^{(n)}_{s},\mathcal{L}_{X^{(n)}_{s}}) \Vert ^2 _{L_2(U_n,H)} \Big) \text{d}s  \bigg]  \\
\leq & \mathbb{E}\left[\Vert X^{(n)}(0) \Vert ^2 _H \right]
- \int_{0}^t \lambda \mathbb{E}\left[\Vert X^{(n)}(s \wedge \tau _l ) \Vert ^2 _H
\right] e^{-\lambda s} \text{d}s \\
&+ \mathbb{E}\bigg[  \int_{0}^{t \wedge \tau _l} e^{-\lambda s} \left(\alpha\left( 1+ \Vert  X^{(n)}_s \Vert _{L^2_H}^2
+ \mu _s^{(n)} (\Vert  \cdot \Vert _{L^2_H}^2)\right)
-\frac{1}{2} \Vert  X^{(n)}(s) \Vert _V^p \right) \text{d}s  \bigg],
\end{align*}
where $\mu _s^{(n)} := \mathcal{L}_{X^{(n)}_{s}}$.
Rearranging the terms yields
\begin{align*}
&\mathbb{E}\left[e^{-\lambda t} \Vert X^{(n)}(t \wedge \tau _l ) \Vert ^2 _H \right]
+ \int_{0}^t \lambda e^{-\lambda s}  \mathbb{E}\left[
\Vert X^{(n)}(s \wedge \tau _l ) \Vert ^2 _H
\right] \text{d}s
\\ &\hspace{2cm} +\frac{1}{2} \int_{0}^t  e^{-\lambda s}
\mathbb{E}\left[1_{[0,\tau _l]}(s) \Vert  X^{(n)}(s \wedge \tau _l ) \Vert _V^p \right] \text{d}s \\
\leq &  \Vert \psi \Vert ^2 _{\mathcal{C}(H)} 
+\int_{0}^t\mathbb{E}\left[ 2 \alpha e^{-\lambda s}\Vert  X^{(n)}_s \Vert _{L^2_H}^2 \right]\text{d}s 
+\alpha \int_{0}^t  e^{-\lambda s} 
\text{d}s \\
\leq & K_1  \Vert \psi \Vert ^2 _{\mathcal{C}(H)} 
+ K_2 \int_{0}^t  e^{-\lambda s} \mathbb{E}\left[
\Vert X^{(n)}(s) \Vert ^2 _H \right] \text{d}s
+ \alpha \int_{0}^t  e^{-\lambda s} 
\text{d}s,
\end{align*}
where $K_1,K_2 \geq 0$ are constants independent of $n$ and $K_2$ is independent of $\lambda$.
To obtain the first estimate it is used that by the definition of $\mu _s^{(n)}$ and $\mu _s^{(n)} (\Vert  \cdot \Vert _{L^2_H}^2)$ 
it is $\mu _s^{(n)} (\Vert  \cdot \Vert _{L^2_H}^2)=\mathbb{E}\left[\Vert X_s\Vert _{L^2_H}^2\right]$.
For the second we used Lemma \ref{A1} in the case $Y=0$, $B=H$ and $p=2$.
In addition it is used that
\begin{align*}
\Vert  X^{(n)}_0(\theta) \Vert _{H}^2 = \Vert  P_n \psi (\theta) \Vert _{H}^2 
\leq \Vert  \psi(\theta) \Vert _{H}^2
\leq \Vert  \psi\Vert _{ \mathcal{C}(H)}^2,
\end{align*}
because $P_n \vert _H$ is  the orthogonal projection onto $H_n$ in $H$
and therefore $\Vert P_n \Vert _{L(H)} \leq 1$. \\
Choosing $\lambda = K_2$ (which is possible, because $K_2$ is independent of $\lambda$), taking $l \rightarrow \infty$ and applying Fatous's lemma we get
\begin{align*}
\mathbb{E}\Big[e^{-K_2 t} \Vert X^{(n)}(t  ) \Vert ^2 _H \Big] +\frac{1}{2} \int_{0}^t  e^{-K_2 s} \mathbb{E}\Big[ \Vert  X^{(n)}(s ) \Vert _V^p \Big] \text{d}s
\leq & K_1  \Vert \psi \Vert ^2 _{\mathcal{C}(H)} + \alpha \int_{0}^t  e^{-K_2 s} 
\text{d}s,
\end{align*}
for all $t \in [0,T]$.
Here we used that by Chapter 3 the subtracted term is finite.
Now the assertion follows for the first and fourth summand in (\ref{10}).
\end{proof}

In the proof above we used the following result which is also important for the proof of the main theorem below:
\begin{lem}
Let $B$ be a Banach space, $p \geq 2$ and
$X$, $Y \in  L^p ([-r_0,T] \times \Omega,\text{d}t \otimes P ;B)$ and $\lambda \geq 0$
Then
\begin{align*}
\mathbb{E}  \bigg[ \int_0^t e^{-\lambda s} &\Vert X_s - Y_s \Vert _{L_B^p}^p \bigg]
\\&\leq r_0 \mathbb{E}  \bigg[ \int_0^t e^{-\lambda s} \Vert X(s) - Y(s) \Vert _B^p \bigg]
+ r_0 e^{\lambda r_0}  \mathbb{E}  \left[\Vert X_0 - Y_0 \Vert _{L_B^p}^p \right]
\end{align*}
for all $t \in [0,T]$.
\label{A1}
\end{lem}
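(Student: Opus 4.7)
The plan is to unfold the definition of the delay-segment $L^p_B$-norm, apply Fubini to exchange the order of integration, change variables from $(\theta,s)$ to $(\theta,u)$ with $u=s+\theta$, and then carefully track the exponential weights while splitting the domain of $u$ into the initial-data regime $[-r_0,0]$ and the process regime $[0,t]$.

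First I would write
\begin{align*}
\| X_s - Y_s \|_{L^p_B}^p = \int_{-r_0}^{0} \| X(s+\theta) - Y(s+\theta) \|_B^p \, \mathrm{d}\theta,
\end{align*}
so that, setting $f(u):=\mathbb{E}\,\|X(u)-Y(u)\|_B^p$ for $u\in[-r_0,T]$ (which is integrable by the hypothesis $X,Y\in L^p([-r_0,T]\times\Omega;B)$), Fubini's theorem gives
\begin{align*}
\mathbb{E}\!\int_0^t e^{-\lambda s}\|X_s-Y_s\|_{L^p_B}^p\,\mathrm{d}s
= \int_{-r_0}^{0}\!\!\int_0^t e^{-\lambda s} f(s+\theta)\,\mathrm{d}s\,\mathrm{d}\theta.
\end{align*}
Substituting $u=s+\theta$ in the inner integral and writing $e^{-\lambda s}=e^{\lambda\theta}e^{-\lambda u}$ yields
\begin{align*}
\int_{-r_0}^{0} e^{\lambda\theta}\int_{\theta}^{t+\theta} e^{-\lambda u} f(u)\,\mathrm{d}u\,\mathrm{d}\theta.
\end{align*}

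Next I would swap the order of integration once more. The region $\{(\theta,u):-r_0\le\theta\le 0,\ \theta\le u\le t+\theta\}$ can be rewritten as $u\in[-r_0,t]$ with $\theta\in[\max(-r_0,u-t),\min(0,u)]$, giving
\begin{align*}
\int_{-r_0}^{t} e^{-\lambda u} f(u)\!\!\int_{\max(-r_0,u-t)}^{\min(0,u)}\!\! e^{\lambda\theta}\,\mathrm{d}\theta\,\mathrm{d}u.
\end{align*}
Since $\theta\le 0$ on the inner range and the range has length at most $r_0$, the inner integral is bounded by $r_0$.

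Finally I would split the outer integral at $u=0$. For $u\in[0,t]$ the factor $e^{-\lambda u}$ stays as it is, producing
\begin{align*}
r_0\int_0^t e^{-\lambda u} f(u)\,\mathrm{d}u
= r_0\,\mathbb{E}\!\int_0^t e^{-\lambda s}\|X(s)-Y(s)\|_B^p\,\mathrm{d}s.
\end{align*}
For $u\in[-r_0,0]$ I would estimate $e^{-\lambda u}\le e^{\lambda r_0}$, yielding
\begin{align*}
r_0\,e^{\lambda r_0}\int_{-r_0}^{0} f(u)\,\mathrm{d}u
= r_0\,e^{\lambda r_0}\,\mathbb{E}\,\|X_0-Y_0\|_{L^p_B}^p,
\end{align*}
and summing the two contributions gives the claimed bound.

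There is no genuine obstacle here; the statement is essentially a Fubini/change-of-variables exercise. The only point that requires attention is correctly accounting for the weight $e^{-\lambda s}$ after the substitution $u=s+\theta$: when $u$ lies in the initial interval $[-r_0,0]$ the remaining factor $e^{-\lambda u}$ may exceed $1$, which is precisely what forces the $e^{\lambda r_0}$ term in front of the initial-data contribution, while the process part on $[0,t]$ keeps the original exponential damping.
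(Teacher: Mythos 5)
Your proof is correct and follows exactly the route the paper intends: the paper's own proof consists only of the hint ``Use Fubini and the transformation,'' and your argument is precisely that computation carried out in full, with the change of variables $u=s+\theta$, the bound of the inner $\theta$-integral by $r_0$, and the estimate $e^{-\lambda u}\leq e^{\lambda r_0}$ on $[-r_0,0]$ producing the initial-data term.
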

\begin{proof}
Use Fubini and the transformation.

\end{proof}

\subsubsection{Proof of Theorem \ref{MT3}}
Now we can finally prove Theorem \ref{MT3}.
\begin{proof}
By Lemma \ref{UGL} and the reflexivity of the spaces $K$, $K^*$, $J$ and $L^2 ([0,T] \times \Omega,\text{d}t \otimes P ;H)$
there exist $\tilde{X} \in K$,
$Y \in K^*$, $Z \in J$ and a subsequence $n_k \overset{k\rightarrow \infty} \longrightarrow \infty$ such that:
\begin{itemize}
\item[(i)]
$X^{(n_k)} \overset{k\rightarrow \infty} \longrightarrow \tilde{X}$
weakly in $K$ and weakly in $L^2 ([0,T] \times \Omega,\text{d}t \otimes P ;H)$.
\item[(ii)]
$Y^{(n_k)}:= A(\cdot,X^{(n_k)}_{\cdot},\mathcal{L}_{X^{(n_k)}_{\cdot}}) \overset{k\rightarrow \infty} \longrightarrow Y$
weakly in $K^*$.
\item[(iii)]
$Z^{(n_k)}:= B(\cdot,X^{(n_k)}_{\cdot},\mathcal{L}_{X^{(n_k)}_{\cdot}}) \overset{k\rightarrow \infty} \longrightarrow Z$ 
weakly in $J$.
\end{itemize}
Note that $\tilde{X}$, $Y$ and $Z$ are progressively measurable, because the approximants are progressively measurable. \\
(iii), $P_{n_k} \overset{k \rightarrow \infty}{\longrightarrow } I_H$ and $\tilde{P}_{n_k} \overset{k \rightarrow \infty}{\longrightarrow } I_U$
implies that $ P_{n_k} B(\cdot,X^{(n_k)}_{\cdot},\mathcal{L}_{X^{(n_k)}_{\cdot}}) \tilde{P}_{n_k} \overset{k\rightarrow \infty} \longrightarrow Z$ 
weakly in $J$.
Therefore, since 
\begin{align*}
\int_{0}^{\cdot}  P_{n_k} B(s,X^{(n_k)}_{s},\mathcal{L}_{X^{(n_k)}_{s}})\text{d}W^{(n_k)}(s)
= \int_{0}^{\cdot}  P_{n_k} B(s,X^{(n_k)}_{s},\mathcal{L}_{X^{(n_k)}_{s}})\tilde{P}_{n_k} \text{d}W(s)
\end{align*}
and since a bounded linear operator between two Banach spaces is weakly continuous, we obtain:

\pagebreak
\begin{itemize}
\item[(iv)]
\begin{align*}
\int_{0}^{\cdot}  P_{n_k} B(s,X^{(n_k)}_{s},\mathcal{L}_{X^{(n_k)}_{s}})\text{d}W^{(n_k)}(s)
\overset{k\rightarrow \infty} \longrightarrow
\int_{0}^{\cdot} Z(s) \text{d}W(s)
\end{align*}
weakly in $\mathcal{M}^2_T(H)$, which denotes the space of continuous, square integrable martingales $M:[0,T]\times \Omega \rightarrow H$ and is equipped with the norm  $\Vert M\Vert^2_{\mathcal{M}^2_T(H)}:=\mathbb{E}\left[\sup _{t\in [0,T]} \Vert M(t) \Vert _H^2\right]$.
\end{itemize}
Now let $v \in \bigcup _{n \geq 1}H_n$ ($\subset V$) and $\varphi \in L^{\infty}([0,T]\times \Omega; \mathbb{R})$.
Using (i)-(iv), the definition of $X^{(n_k)}$, Fubini and Lemma \ref{PLem} we obtain
\begin{align*}
&\mathbb{E} \bigg[\int_{0}^{T} {}_{V^*} \langle \tilde{X}(t), \varphi (t) v \rangle _V \text{d}t \bigg] \\
=& \lim _{k \rightarrow \infty} \mathbb{E} \bigg[\int_{0}^{T} {}_{V^*} \langle X^{(n_k)}(t), \varphi (t) v \rangle _V \text{d}t \bigg] \\
=&\lim _{k \rightarrow \infty}  \mathbb{E} \bigg[\int_{0}^{T} \bigg(  {}_{V^*} \langle P_{n_k} \psi (0) , \varphi (t) v \rangle
+ {}_{V^*} \Big \langle \int_{0}^{t} P_{n_k} Y^{(n_k)}(s) \text{d}s  , \varphi (t) v \Big \rangle {}_V 
 \\& \hspace{146pt} +\Big \langle \int_{0}^{t} P_{n_k} Z^{(n_k)}(s) \text{d}W^{(n_k)}(s),  \varphi (t) v \Big \rangle _H \text{d}t
\bigg)\bigg] \\
=& \lim _{k \rightarrow \infty} \bigg( \mathbb{E} \bigg[
\langle P_{n_k} \psi (0), v \rangle _H  \int_{0}^{T} \varphi (t) v \text{d}t \bigg]
+\mathbb{E} \bigg[\int_{0}^{T} {}_{V*}
\Big\langle Y^{(n_k)}(s), 
\int_{s}^{T}  \varphi (t) v \text{d}t \Big\rangle {}_V 
\text{d}s \bigg]\\
&\hspace{35pt} +\mathbb{E} \bigg[\int_{0}^{T} \Big \langle \int_{0}^{t} P_{n_k} Z^{(n_k)}(s) \text{d}W^{(n_k)}(s),  \varphi (t) v \Big \rangle _H \text{d}t
\bigg] \bigg) \\
=& \mathbb{E} \bigg[ \int_{0}^{T} {}_{V*} \Big\langle \psi (0) 
+\int_{0}^{t} Y(s) \text{d}s + \int_{0}^{t} Z(s) \text{d}W(s),
\varphi (t) v \Big\rangle {}_V \text{d}t \bigg].
\end{align*}
Defining 
\begin{equation*}
X(t):=
\begin{cases}
\psi (0) +  \int_{0}^t Y(s) \text{d}s + \int_{0}^t Z(s) \text{d}W(s),
\  &t \in [0,T] \\
\psi(t), & t \in [-r_0,0].
\end{cases}
\end{equation*}
we have for all $v \in \bigcup _{n \geq 1} H_n$ ($\subset V$) and $\varphi \in L^{\infty}([0,T]\times \Omega; \mathbb{R})$
\begin{equation*}
\mathbb{E} \bigg[\int_{0}^{T} {}_{V^*} \langle \tilde{X}(t), \varphi (t) v \rangle \text{d}t \bigg]
=\mathbb{E} \bigg[\int_{0}^{T} {}_{V^*} \langle X(t), \varphi (t) v \rangle \text{d}t \bigg].
\end{equation*}
Thus, using that $\bigcup _{n \geq 1} H_n$ is dense in $V$ by the choice of $(e_i)_{i\in \mathbb{N}}$,
we have $X(t,\omega) = \tilde{X}(t,\omega)$ $\text{d}t \otimes P$-a.e. $(t,\omega) \in [0,T]\times\Omega$.\\

\pagebreak
\noindent
This together with $X_0=\psi \in \mathcal{C}(H)\cap L^p_V$ implies,
that for the $\text{d}t\otimes P$ equivalence class $\hat{X}$ of $X$, we have 
$\hat{X} \in L^p([-r_0,T]\times\Omega;\text{d}t\otimes P;V)$. 
\cite[Theorem 4.2.5]{Liu2015} now implies that $X$ is a continuous $H$-valued $(\mathcal{F}_t)$-adapted process,
\begin{equation*}
\mathbb{E}\bigg[\sup_{t \in [-r_0,T]} \Vert X(t) \Vert _H ^2 \bigg] < \infty.
\end{equation*}
Therefore, it remains to verify that
\begin{equation}
\label{A}
A(\cdot,\bar{X}_{\cdot}, \mathcal{L}_{\bar{X}_{\cdot}})=Y, 
B(\cdot,\bar{X}_{\cdot}, \mathcal{L}_{\bar{X}_{\cdot}})=Z,  \text{d}t \otimes P-a.e.,
\end{equation}
where $(\bar{X}_t)_{t\in [0,T]}$ is a progressively measurable, $\mathcal{C}(H)\cap L^p_V$-valued version of $X$, the existence of which can be proved as in \cite[Excercise 4.2.3., Part 2]{Liu2015}.
To prove (\ref{A}) we first take $\rho \in L^{\infty}([0,T], \text{d}t; \mathbb{R})$, non-negative.
Then (i) and Cauchy-Schwartz implies that
\begin{align*}
\mathbb{E}&\bigg[ \int_{0}^T \rho(t) \Vert \tilde{X}(t) \Vert_H^2  \text{d}t \bigg] 
=\lim _{k \rightarrow \infty}\mathbb{E} \bigg[ \int_{0}^T  \langle \rho(t) \tilde{X}(t), X^{(n_k)}(t) \rangle _H \text{d}t \bigg]\\
 &\leq
\bigg(\mathbb{E}\bigg[ \int_{0}^T \rho(t) \Vert \tilde{X}(t) \Vert_H^2  \text{d}t \bigg]\bigg)^{\frac{1}{2}} 
\liminf _{k \rightarrow \infty} \bigg( \mathbb{E} \bigg[ \int_{0}^T \rho(t) \Vert X^{(n_k)}(t) \Vert_H^2  \text{d}t  \bigg]\bigg)^{\frac{1}{2}}.
\end{align*}
Since $X = \tilde{X}=\bar{X}$ $\text{d}t \otimes P$-a.e. on $[0,T]\times \Omega$ and $\Big(\mathbb{E}\Big[ \int_{0}^T \rho(t) \Vert \bar{X}(t) \Vert_H^2  \text{d}t \Big]\Big)^{\frac{1}{2}} < \infty$,
this implies
\begin{equation}
\label{FAb}
\mathbb{E}\bigg[ \int_{0}^T \rho(t) \Vert \bar{X}(t) \Vert_H^2  \text{d}t \bigg]^{\frac{1}{2}} 
\leq \liminf _{k \rightarrow \infty} \bigg( \mathbb{E} \bigg[ \int_{0}^T \rho(t) \Vert X^{(n_k)}(t) \Vert_H^2  \text{d}t  \bigg]\bigg)^{\frac{1}{2}}.
\end{equation}
By using Itô's formula for the expected value (c.f. \cite[Remark 4.2.8]{Liu2015}) the product rule and Fubini we obtain for $\lambda \geq 0$ that
\begin{align}\label{IEA}
\begin{split}
\mathbb{E}&[e^{-\lambda t} \Vert X(t) \Vert_H^2 ] - \mathbb{E}[ \Vert \psi(0) \Vert_H^2 ]
\\&=\mathbb{E} \bigg[ \int_0^t e^{-\lambda s} \left( 2 {}_{V^*} \langle Y(s),\bar{X}(s) \rangle_V + \Vert Z(s) \Vert_{L_2(U,H)}^2 - \lambda \Vert X(s)\Vert^2_H \right) \text{d}s \bigg].
\end{split}
\end{align}
Let $\phi \in L^p([-r_0,T] \times \Omega, \text{d}t \otimes \Omega;V)$
such that $\phi(\omega,\cdot) \in C([-r_0,T];H)$ for $P$-a.e. $\omega \in \Omega$
and $\mathbb{E}\left[\Vert \phi _t \Vert _{\mathcal{C}(H)}^2\right]<\infty$ for all $t \in [0,T]$
(This implies just like in \ref{StetRem} that $t \mapsto \mathcal{L}_{\phi _t}$ is continuous.).
By using Itô's formula for the expected value in the case $V=H=V^*=H_{n_k}$ and  defining $\mu^{(n)}_t:=\mathcal{L}_{{X^{(n)}_t}}$, $\nu_t:=\mathcal{L}_{{\phi _t}}$, $t \in [0,T]$, 
we deduce that

\pagebreak
\begin{align}
&\mathbb{E}[e^{-\lambda t} \Vert X^{(n_k)}(t) \Vert_H^2 ]-\mathbb{E}[ \Vert P_{n_k} \psi(0) \Vert_H^2 ] \notag\\
=&\mathbb{E} \bigg[ \int_0^t e^{-\lambda s} \Big( 2 {}_{V^*} \langle P_{n_k} A(s,X^{(n_k)}_s ,\mu^{(n_k)}_s)  ,X^{(n_k)}(s) \rangle_V \notag
\\&\hspace{2cm} + \Vert P_{n_k} B(s,X^{(n_k)}_s ,\mu^{(n_k)}_s) \tilde{P}_{n_k} \Vert_{L_2(U,H)}^2 
- \lambda \Vert  X^{(n_k)}(s)\Vert^2_H \Big) \text{d}s \bigg] \notag\\
\leq & \mathbb{E} \bigg[ \int_0^t e^{-\lambda s} \Big( 2 {}_{V^*} \langle  A(s,X^{(n_k)}_s ,\mu^{(n_k)}_s)  ,X^{(n_k)}(s) \rangle_V \notag
\\&\hspace{2cm} + \Vert  B(s,X^{(n_k)}_s ,\mu^{(n_k)}_s)  \Vert_{L_2(U,H)}^2 
- \lambda \Vert  X^{(n_k)}(s)\Vert^2_H \Big) \text{d}s \bigg] \notag\\
=& \mathbb{E} \bigg[ \int_0^t e^{-\lambda s} \bigg( 2 {}_{V^*} \langle  A(s,X^{(n_k)}_s ,\mu^{(n_k)}_s) -A(s,\phi_s ,\nu_s)  ,X^{(n_k)}(s) - \phi (s) \rangle_V \notag\\
&\hspace{2cm}  + \Vert  B(s,X^{(n_k)}_s ,\mu^{(n_k)}_s) - B(s,\phi_s ,\nu_s) \Vert_{L_2(U,H)}^2 \notag
\\ &\hspace{2cm}- \lambda \Vert X^{(n_k)}(s) - \phi (s) \Vert _H^2
\bigg) \text{d}s \bigg] \label{IAb}\\
& \ + \mathbb{E} \bigg[ \int_0^t e^{-\lambda s} \Big( 2 {}_{V^*} \langle A(s,\phi_s ,\nu_s) ,X^{(n_k)}(s) \rangle _V \notag
\\ &\hspace{65pt} +  2 {}_{V^*} \langle  A(s,X^{(n_k)}_s ,\mu^{(n_k)}_s) -A(s,\phi_s ,\nu_s)  , \phi (s) \rangle_V \notag
\\ &\hspace{65pt} -  \Vert B(s,\phi_s ,\nu_s) \Vert_{L_2(U,H)}^2 
+ 2 \langle  B(s,X^{(n_k)}_s ,\mu^{(n_k)}_s), B(s,\phi_s ,\nu_s) \rangle _{L_2(U,H)} \notag
\\& \hspace{65pt}- 2 \lambda \langle X^{(n_k)}(s), \phi (s) \rangle _H 
+ \lambda \Vert \phi (s) \Vert _H^2 \Big)  \text{d}s \bigg]. \notag
\end{align}
By the definition of the Wasserstein distance, it is $\mathbb{W}_2(\mu^{(n)}_t, \nu _t) \leq \mathbb{E} \big[\Vert X^{(n_k)}_t - \phi _t \Vert _{L^2_H}^2 \big]$ for all $t \in [0,T]$.
This together with Lemma \ref{A1} and (H3) implies for $\lambda:=2 \beta r_0$
\begin{equation}
\begin{split}
&\mathbb{E} \bigg[ \int_0^t e^{-\lambda s} \Big( 2 {}_{V^*} \langle  A(s,X^{(n_k)}_s ,\mu^{(n_k)}_s) -A(s,\phi_s ,\nu_s)  ,X^{(n_k)}(s) - \phi (s) \rangle_V
 \\
& \qquad + \Vert  B(s,X^{(n_k)}_s ,\mu^{(n_k)}_s) - B(s,\phi_s ,\nu_s) \Vert_{L_2(U,H)}^2 
- \lambda \Vert X^{(n_k)}(s) - \phi (s) \Vert _H^2
\Big) \text{d}s \bigg] \\
&\leq
\mathbb{E} \bigg[ \beta \int_0^t e^{-\lambda s} \Big( \Vert X^{(n_k)}_s - \phi _s \Vert _{L^2_H}^2
+ \mathbb{W}_2(\mu^{(n)}_s, \nu _s)
- \lambda \Vert X^{(n_k)}(s) - \phi (s) \Vert _H^2
\Big) \text{d}s \bigg]\\
& \leq \mathbb{E} \bigg[ \int_0^t e^{-\lambda s} \left(2 \beta r_0 - \lambda\right)\Vert X^{(n_k)}(s) - \phi (s) \Vert _H^2\text{d}s \bigg]
+2 \beta r_0e^{\lambda r_0}  \mathbb{E}  \left[\Vert X^{(n_k)}_0 - \phi_0 \Vert _{L_H^2}^2 \right] \\
& \leq 2 \beta r_0e^{\lambda r_0}  \mathbb{E}  \left[\Vert X^{(n_k)}_0 - \phi_0 \Vert _{L_H^2}^2 \right],
\end{split}
\label{HAb}
\end{equation}
for all $t \in [0,T]$.
By inserting (\ref{HAb}) in (\ref{IAb}) and letting $k \rightarrow \infty$, we conclude by (i)-(iii), Fubini's theorem and (\ref{FAb}) 
that for every non-negative  $\rho \in L^{\infty}([0,T], \text{d}t; \mathbb{R})$
\begin{align*}
&\mathbb{E} \bigg[ \int_0^T \rho(t) \lbrace e^{-\lambda t} \Vert X(t) \Vert _H^2 - \Vert \psi (0) \Vert_H^2 \rbrace \text{d}t \bigg] \\
\leq & \mathbb{E} \bigg[ \int_0^T \rho(t) \bigg\lbrace \int_0^t e^{-\lambda s} 
\Big(  2 {}_{V^*} \langle A(s,\phi_s ,\nu_s), \bar{X}(s) \rangle _V 
+  2 {}_{V^*} \langle  Y(s) -A(s,\phi_s ,\nu_s)  , \phi (s) \rangle_V \\
&\hspace{30pt} -\Vert B(s,\phi_s ,\nu_s) \Vert_{L_2(U,H)}^2 
+ 2 \langle  Z(s), B(s,\phi_s ,\nu_s) \rangle _{L_2(U,H)}
-2 \lambda \langle X(s), \phi (s) \rangle _H  \\
&\hspace{30pt} + \lambda \Vert \phi (s) \Vert _H^2 \Big)  \text{d}s \bigg\rbrace  \text{d}t \bigg] 
+  \bigg(\int_0^T \rho(t)\text{d}t\bigg) 2 \beta r_0e^{\lambda r_0}   \mathbb{E}  \left[\Vert X_0 - \phi_0 \Vert _{L_H^2}^2 \right] .
\end{align*}
Inserting (\ref{IEA}) for the left-hand site rearranging and defining 
$L=L(\rho,\beta,r_0,T):=\Big(\int_0^T \rho(t)\text{d}t\Big)
2 \beta r_0e^{\lambda r_0} T$ we arrive at
\begin{equation}
\begin{split}
\mathbb{E} & \bigg[ \int_0^T \rho(t) \bigg\lbrace \int_0^t e^{-\lambda s} 
\Big(2 {}_{V^*} \langle  Y(s) -A(s,\phi_s ,\nu_s)  ,\tilde{X}(s) - \phi (s) \rangle_V\\
&\hspace{55pt} + \Vert B(s,\phi_s ,\nu_s)-Z(s) \Vert_{L_2(U,H)}^2
- \lambda\Vert X(s) - \phi (s) \Vert _H^2 
\Big) \text{d}s \bigg\rbrace \text{d}t \bigg] 
\\ &\leq L  \mathbb{E}  \left[\Vert X_0 - \phi_0 \Vert _{L_H^2}^2 \right]
\end{split}
\label{ABF}
\end{equation}
Taking $\phi = \bar{X}$ and noting that $\bar{X}_0=\psi=X_0$ $P$-a.s. we obtain from (\ref{ABF}) that
\begin{align*}
\mathbb{E} \bigg[ \int_0^T \rho(t) \bigg\lbrace \int_0^t e^{-\lambda s}
\Vert B(s,\bar{X}_s ,\mathcal{L}_{\bar{X}_s})-Z(s) \Vert_{L_2(U,H)}^2 
\bigg\rbrace \text{d}t \bigg] 
\leq 0
\end{align*}
and therefore $B(\cdot,\bar{X}_{\cdot}, \mathcal{L}_{\bar{X}_{\cdot}})=Z \ \  \text{d}t \otimes P-a.e.$ \\
Finally we take $\phi = \bar{X}-\epsilon \tilde{\phi}v$ for $\epsilon>0$,
$v \in V$ and 
$\tilde{\phi} \in L^{\infty}([-r_0,T] \times \Omega, \text{d}t \otimes P;\mathbb{R})$
with $\tilde{\phi}(\omega,\cdot)$ continuous for $P$-a.e $\omega \in \Omega$,
$\mathbb{E}\left[\Vert \tilde{\phi} _t \Vert^2 _{\mathcal{C}(\mathbb{R})} \right]<\infty$ for all $t \in [0,T]$.
(\ref{ABF}) now implies 
\begin{align}
\epsilon \bigg(
\mathbb{E}  \bigg[ &\int_0^T \rho(t) \bigg\lbrace \int_0^t e^{-\lambda s} 
\left(2 {}_{V^*} \langle  Y(s) -A(s,\phi_s ,\nu_s)  ,\tilde{\phi} (s)v \rangle_V
- \epsilon \Vert \tilde{\phi} (s)v \Vert _H^2 \right) \text{d}s \bigg\rbrace \text{d}t \bigg] \bigg) \notag\\
&\leq \epsilon ^2 L \mathbb{E}\left[\Vert \tilde{\phi}v \Vert _{L_H^2}^2 \right].
\label{FE}
\end{align}
Dividing (\ref{FE}) by $\epsilon$ and taking $\epsilon \rightarrow 0$ we obtain by Lebesgue's dominated convergence theorem, (H1) and (H4) that
\begin{align*}
\mathbb{E}  \bigg[ \int_0^T \rho(t) \bigg\lbrace \int_0^t e^{-\lambda s} 
2 {}_{V^*} \langle  Y(s) -A(s,\bar{X}_s ,\mathcal{L}_{\bar{X}_s})  ,\tilde{\phi}(s)v \rangle_V
\text{d}s \bigg\rbrace \text{d}t \bigg] 
\leq 0.
\end{align*}
Replacing $\tilde{\phi}$ with $-\tilde{\phi}$ leads to
\begin{equation*}
\mathbb{E}  \bigg[ \int_0^T \rho(t) \bigg\lbrace \int_0^t e^{-\lambda s} 
2 {}_{V^*} \langle  Y(s) -A(s,\bar{X}_s ,\mathcal{L}_{\bar{X}_s})  ,\tilde{\phi}(s)v \rangle_V
\text{d}s \bigg\rbrace \text{d}t \bigg] = 0.
\end{equation*}
By the arbitrariness of $\rho$, $\tilde{\phi}$ and $ v$ we conclude with that
$A(\cdot,\bar{X}_{\cdot}, \mathcal{L}_{\bar{X}_{\cdot}})=Y$.
This completes the proof of existence.
The uniqueness follows directly from the theorem below.
\end{proof}

\begin{thm}
Consider the situation of Theorem \ref{MT3} and let $X,Y$ be two solutions of (\ref{7}) in the sense of Definition \ref{DS}.
Then for $\beta \geq 0 $ as in (H3) and $t \in [0,T]$:
\begin{itemize}
\item[(i)]
\begin{equation*}
\mathbb{E}\left[\Vert X(t) - Y(t) \Vert _H^2\right] 
\leq \left(1+r_0^2 e^{2 \beta r_0^2 }\right)
 e^{2 \beta r_0t} \mathbb{E}\left[\Vert X_0 - Y_0 \Vert _{\mathcal{C}(H)}^2\right],
\end{equation*}
\item[(ii)]
\begin{align*}
\mathbb{E}&\bigg[\sup_{s \in [0,t]} \Vert X_s -Y_s \Vert_{\mathcal{C}(H)}^2 \bigg]\\
&\leq \inf_{\epsilon \in (0,1)} \Bigg(\Bigg(\frac{\mathbb{E}\Big[ \Vert X_0-Y_0 \Vert_{\mathcal{C}(H)} ^2\Big]}{1-\epsilon} \Bigg)
\exp\left[\frac{2 r_0 }{1-\epsilon}\left( 1+ \frac{6}{\epsilon}  \right)\beta t \right]\Bigg).
\end{align*}
\end{itemize}
\label{Eind3}
\end{thm}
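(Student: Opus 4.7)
The plan is to argue in parallel with Theorem \ref{MT2}(b), transported to the infinite-dimensional variational framework by the same Itô formula that was used in the existence part of Theorem \ref{MT3}. The key input is to apply Itô's formula (\cite[Remark 4.2.8]{Liu2015}) to $\|X(t)-Y(t)\|_H^2$ together with the product rule applied to $e^{-\lambda t}$, obtaining for every $t\in[0,T]$
\begin{align*}
e^{-\lambda t}\|X(t)-Y(t)\|_H^2
&= \|X(0)-Y(0)\|_H^2 \\
&\quad + \int_0^t e^{-\lambda s}\Big(2{}_{V^*}\bigl\langle A(s,\bar X_s,\mathcal L_{\bar X_s})-A(s,\bar Y_s,\mathcal L_{\bar Y_s}),\,X(s)-Y(s)\bigr\rangle{}_V\\
&\qquad\qquad\quad + \|B(s,\bar X_s,\mathcal L_{\bar X_s})-B(s,\bar Y_s,\mathcal L_{\bar Y_s})\|_{L_2(U,H)}^2\\
&\qquad\qquad\quad - \lambda\|X(s)-Y(s)\|_H^2\Big)\,\mathrm d s + N(t),
\end{align*}
where $N(\cdot)$ is the local martingale from the stochastic integral. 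The $\lambda$-weight is exactly what makes (H3) applicable directly to the resulting integrals.

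For (i), I would first localise via stopping times $\tau_l\uparrow T$ turning $N(\cdot\wedge\tau_l)$ into a true martingale, take expectation, and apply (H3) to get an upper bound of the form
$$e^{-\lambda t}\mathbb{E}\|X(t\wedge\tau_l)-Y(t\wedge\tau_l)\|_H^2 \leq \mathbb{E}\|\psi_X(0)-\psi_Y(0)\|_H^2 + 2\beta\!\int_0^t\! e^{-\lambda s}\bigl(\mathbb{E}\|\bar X_s-\bar Y_s\|_{L^2_H}^2+\mathbb{W}_2(\mathcal L_{\bar X_s},\mathcal L_{\bar Y_s})^2\bigr)\mathrm d s - \lambda\!\int_0^t\! e^{-\lambda s}\mathbb{E}\|X(s\wedge\tau_l)-Y(s\wedge\tau_l)\|_H^2\mathrm d s.$$
The obvious coupling $(\bar X_s,\bar Y_s)\in\Gamma(\mathcal L_{\bar X_s},\mathcal L_{\bar Y_s})$ gives $\mathbb{W}_2(\mathcal L_{\bar X_s},\mathcal L_{\bar Y_s})^2\leq\mathbb{E}\|\bar X_s-\bar Y_s\|_{L^2_H}^2$, and Lemma \ref{A1} (with $B=H$, $p=2$) converts the segment norm into $\mathrm d s\,\mathbb{E}\|X(s)-Y(s)\|_H^2$ plus a boundary term $r_0 e^{\lambda r_0}\mathbb{E}\|X_0-Y_0\|_{L^2_H}^2$. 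Passing $l\to\infty$ by Fatou and choosing $\lambda$ so that the $-\lambda$ absorbs the remaining time integral (exactly $\lambda=2\beta r_0$ after carefully bookkeeping factors), followed by Gronwall and the trivial bounds $\|X_0-Y_0\|_{L^2_H}^2\leq r_0\|X_0-Y_0\|_{\mathcal C(H)}^2$, $\|X(0)-Y(0)\|_H^2\leq\|X_0-Y_0\|_{\mathcal C(H)}^2$, yields the stated bound (i).

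For (ii), I would take $\sup_{r\in[0,t]}$ on both sides of the Itô identity before taking expectation. The drift and diffusion contributions are handled identically, but now the martingale term $\mathbb{E}\sup_{r\in[0,t]}|N(r)|$ is controlled by the Burkholder--Davis--Gundy inequality, producing (after factoring out $\sup\|X(s)-Y(s)\|_H$ via Cauchy--Schwarz) a term of the form $6\,\mathbb E\bigl[\bigl(\int_0^t\|X(s)-Y(s)\|_H^2\|B(s,\bar X_s,\mathcal L_{\bar X_s})-B(s,\bar Y_s,\mathcal L_{\bar Y_s})\|_{L_2(U,H)}^2\,\mathrm d s\bigr)^{1/2}\bigr]$. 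Young's inequality with parameter $\epsilon\in(0,1)$ splits this into $\epsilon\,\mathbb E\sup_{r\in[-r_0,t]}\|X(r)-Y(r)\|_H^2$ (absorbed on the left after using $\mathbb E\sup_{r\in[-r_0,t]}\|X(r)-Y(r)\|_H^2=\mathbb E\sup_{s\in[0,t]}\|X_s-Y_s\|_{\mathcal C(H)}^2$ supplemented on $[-r_0,0]$ by $\mathbb E\|X_0-Y_0\|_{\mathcal C(H)}^2$) plus a time integral treated by (H3) and Lemma \ref{A1} as in (i). Gronwall with the factor $\frac{1}{1-\epsilon}\bigl(1+\frac{6}{\epsilon}\bigr)$ produces the $\epsilon$-parametrised estimate, and taking the infimum over $\epsilon$ delivers (ii).

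The main obstacle is purely the constants bookkeeping: matching the coefficients $2\beta r_0$, $r_0^2 e^{2\beta r_0^2}$ in (i) and $\frac{2r_0}{1-\epsilon}(1+\frac{6}{\epsilon})\beta$ in (ii) forces a careful choice of $\lambda$ that simultaneously absorbs the pointwise $H$-norm integral produced by Lemma \ref{A1} against the $-\lambda\|X(s)-Y(s)\|_H^2$ term, while keeping the boundary factor $e^{\lambda r_0}$ of the expected form. No new structural argument beyond the ones already deployed in Theorem \ref{MT2}(b) and in the main existence step is required.
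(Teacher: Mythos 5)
Your proposal follows the paper's proof essentially verbatim: part (i) is the same It\^o-plus-product-rule computation with the exponential weight $e^{-2\beta r_0 t}$, an application of (H3), the coupling bound $\mathbb{W}_2(\mathcal{L}_{\bar X_s},\mathcal{L}_{\bar Y_s})^2\le\mathbb{E}\big[\Vert X_s-Y_s\Vert_{L^2_H}^2\big]$ and Lemma \ref{A1}, with the $-\lambda\Vert X(s)-Y(s)\Vert_H^2$ term absorbing the resulting time integral, while part (ii) is exactly the paper's (unwritten) transcription of the proof of Theorem \ref{MT2}(b) via BDG, Young's inequality with parameter $\epsilon$ and Gronwall. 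The only cosmetic differences are your explicit localisation by stopping times and the superfluous invocation of Gronwall in (i), which the $\lambda$-absorption already renders unnecessary.
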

\begin{proof}
(i)
By our definition of solution (Definition \ref{DS}) we can apply Itô's formula
to $X-Y$ and the product rule to obtain for $t \in [0,T]$
\begin{align*}
e^{-2 \beta r_0 t} &\mathbb{E}\Big[ \Vert X(t) - Y(t) \Vert _H^2\Big]
\\ &= \mathbb{E}\left[\Vert X(0) - Y(0) \Vert_H^2 \right] 
\\ &\qquad + \mathbb{E} \bigg[ \int_{0}^t e^{-2 \beta r_0 s}  2 {}_{V^*} \langle A(s,\bar{X}_s,\mathcal{L}_{\bar{X}_s})-A(s,\bar{Y}_s,\mathcal{L}_{\bar{Y}_s}), \bar{X}(s)-\bar{Y}(s) \rangle _V
\\ &\hspace{120pt} + \Vert B(s,\bar{X}_s,\mathcal{L}_{\bar{X}_s})-B(s,\bar{Y}_s,\mathcal{L}_{\bar{Y}_s}) \Vert _{L_2(U,H)}^2  \text{d}s \bigg] 
\\ &\qquad -2 \beta r_0\int_{0}^t e^{-2 \beta r_0 s} \mathbb{E}\left[[\Vert X(s) - Y(s) \Vert_H^2  \right] \text{d}s
\\ &\leq  \mathbb{E}\left[\Vert X(0) - Y(0) \Vert_H^2 \right] +\mathbb{E} \bigg[ \int_{0}^t e^{-2 \beta r_0 s} \Vert X_s - Y_s \Vert _{L_H^2}^2  +\mathbb{W}_2(\mathcal{L}_{\bar{X}_s},\mathcal{L}_{\bar{Y}_s}) \text{d}s \bigg]
\\ &\qquad -2 \beta r_0\int_{0}^t e^{-2 \beta r_0 s} \mathbb{E}\left[[\Vert X(s) - Y(s) \Vert_H^2  \right] \text{d}s
\\ &\leq \mathbb{E}\left[\Vert X(0) - Y(0) \Vert_H^2 \right]
+ r_0 e^{2 \beta r_0^2}  \mathbb{E}  \left[\Vert X_0 - Y_0 \Vert _{L_H^2}^2 \right]
\\ &\leq \left(1+r_0^2 e^{2 \beta r_0^2}\right) \mathbb{E}\left[\Vert X_0 - Y_0 \Vert _{\mathcal{C}(H)}^2\right].
\end{align*}
Where we used (H3) to obtain the first estimate.
To obtain the second estimate we used that by the definition of the Wasserstein distance
$\mathbb{W}_2(\mathcal{L}_{\bar{X}_s},\mathcal{L}_{\bar{Y}_s}) \leq \mathbb{E}[\Vert X_s - Y_s \Vert _{L_H^2}^2]$ together with Lemma \ref{A1}. \\
Multiplying with $e^{2 \beta r_0 t}$ yields (i). \\

(ii)
The proof is similar to the proof of Theorem \ref{MT2} (b).
\end{proof}

Note that in the proof of Theorem \ref{Eind3} (i) only the first estimate in (H3) is used.
In fact this is only needed to show the existence of solutions to the finite dimensional equation (\ref{9}).
That means that if it were possible to proof the existence of solutions to finite dimensional path-distribution dependent SDE's without the ``Lipschitz-condition''
\begin{align*}
\int_{0}^t \Vert \sigma(s,\xi_s, \mu_s)-\sigma(s,\eta _s, \nu _s) \Vert _{\text{HS}}^2 
\leq \beta (t) \int_{0}^t  \Vert \xi _s- \eta _s \Vert _{\infty}^2
+ \mathbb{W}_2(\mu _s , \nu _s)^2 \text{d}s
\end{align*}
in (H3) in chapter 2, the second part of (H3) in this chapter could be dropped and we would have existence and uniqueness of solutions to (\ref{7}) in the sense of Definition \ref{DS} by the arguments presented in this chapter. \\
\\
By Theorem \ref{MT3} we know that $\mathbb{E}[\sup_{t \in [-r_0,T]} \Vert X(t) \Vert _H ^2 ] < \infty$.
The final proposition of this chapter gives a more precise estimate of  $\mathbb{E}[\sup_{t \in [-r_0,T]} \Vert X(t) \Vert _H ^2 ]$. The proof is analogous to the proof of Theorem \ref{MT2} (b) (ii).
\begin{prop}
Consider the situation of Theorem \ref{MT3} and let $X,Y$ be two solutions of (\ref{7}) in the sense of Definition \ref{DS}.
Then
\begin{align*}
\mathbb{E}\bigg[\sup _{r \in [-r_0,T]}\Vert X(r) \Vert^2 _H
+\int_0^T \Vert X(s) \Vert ^p _V\text{d}s \bigg] 
\leq C\left(1 + \mathbb{E}\left[\Vert X_0 \Vert^2_{\mathcal{C}(H)}\right] \right)
\end{align*}
for some $C>0$.
\end{prop}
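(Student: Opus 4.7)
The plan is to mimic the proof of Theorem~\ref{MT2}(b)(ii), carried out in the Gelfand-triple setting already used in the existence argument above. Let $X$ be any solution of (\ref{7}) with $X_0 = \psi$ and set $\mu_s := \mathcal{L}_{\bar X_s}$. By Definition~\ref{DS} and the variational Itô formula (\cite[Theorem~4.2.5]{Liu2015}, as invoked in the derivation of (\ref{IEA})), for every $t\in[0,T]$,
\begin{align*}
\Vert X(t) \Vert_H^2 = \Vert \psi(0) \Vert_H^2
&+ \int_0^t\Bigl( 2\,{}_{V^*}\langle A(s,\bar X_s,\mu_s),\bar X(s)\rangle_V + \Vert B(s,\bar X_s,\mu_s)\Vert_{L_2(U,H)}^2\Bigr)\,\text{d}s \\
&+ 2\int_0^t \langle X(s), B(s,\bar X_s,\mu_s)\,\text{d}W(s)\rangle_H.
\end{align*}
Applying (H2) with $\lambda=0$ to the deterministic part dominates it by $\alpha\int_0^t(1+\Vert\bar X_s\Vert_{L^2_H}^2+\mu_s(\Vert\cdot\Vert_{L^2_H}^2))\,\text{d}s-\tfrac12\int_0^t\Vert\bar X(s)\Vert_V^p\,\text{d}s$, so the negative $L^p(V)$-term is produced on the left-hand side; the quadratic variation of the martingale part is controlled via the second inequality of (H4).

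Next I would take supremum over $r\in[-r_0,t]$ (splitting off the constant contribution $\Vert\psi\Vert_{\mathcal{C}(H)}^2$ coming from the interval $[-r_0,0]$) and then expectation. The stochastic integral is handled by the Burkholder--Davis--Gundy/Young trick used in the proof of Theorem~\ref{MT2}(b)(ii): for any $\epsilon\in(0,\tfrac12)$,
\begin{align*}
2\,\mathbb{E}\Bigl[\sup_{r\in[0,t]}\Bigl|\int_0^r\langle X(s),B\,\text{d}W(s)\rangle_H\Bigr|\Bigr]
&\le \epsilon\,\mathbb{E}\Bigl[\sup_{r\in[-r_0,t]}\Vert X(r)\Vert_H^2\Bigr]
+ C_\epsilon\gamma\,\mathbb{E}\int_0^{t}\bigl(1+\Vert\bar X_s\Vert_{L^2_H}^2+\mu_s(\Vert\cdot\Vert_{L^2_H}^2)\bigr)\text{d}s.
\end{align*}
This is made rigorous by localising with $\tau_l := \inf\{s:\Vert X(s)\Vert_H\ge l\}\wedge T$ as in the proof of Lemma~\ref{UGL}, and then sending $l\to\infty$ via Fatou/monotone convergence (which is possible because Theorem~\ref{MT3} already gives $\mathbb{E}[\sup_{t\in[-r_0,T]}\Vert X(t)\Vert_H^2]<\infty$). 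Using the transformation rule $\mu_s(\Vert\cdot\Vert_{L^2_H}^2)=\mathbb{E}\Vert\bar X_s\Vert_{L^2_H}^2$ (exactly as in Lemma~\ref{UGL}) together with the elementary bound $\Vert\bar X_s\Vert_{L^2_H}^2\le r_0\sup_{u\in[-r_0,s]}\Vert X(u)\Vert_H^2$ (which is Lemma~\ref{A1} with $Y=0$), and absorbing the $\epsilon$-term into the left-hand side, I would arrive at
\begin{align*}
\mathbb{E}\Bigl[\sup_{r\in[-r_0,t]}\Vert X(r)\Vert_H^2 + \int_0^t\Vert X(s)\Vert_V^p\,\text{d}s\Bigr]
\le C_1\bigl(1+\mathbb{E}\Vert\psi\Vert_{\mathcal{C}(H)}^2\bigr) + C_2\int_0^t\mathbb{E}\Bigl[\sup_{u\in[-r_0,s]}\Vert X(u)\Vert_H^2\Bigr]\text{d}s.
\end{align*}
Gronwall's lemma applied to the sup-part yields its uniform bound on $[0,T]$, and reinserting this bound into the displayed inequality controls $\int_0^T\Vert X(s)\Vert_V^p\,\text{d}s$, producing the claimed estimate with a single constant $C$ depending only on $T,\alpha,\beta,\gamma,r_0$.

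The only real pitfalls are bookkeeping: one has to make sure that the BDG/Young absorption only eats part of the $H$-sup and does \emph{not} touch the $\int\Vert\bar X(s)\Vert_V^p\,\text{d}s$ coming from (H2), since the latter is what furnishes the second summand in the conclusion; and one has to ensure that after the localisation $\tau_l\uparrow T$ both the sup term and the $L^p([0,T];V)$ term survive in the limit (the latter via monotone convergence in $l$). No technique beyond those already used in Theorem~\ref{MT2}(b)(ii) and Lemma~\ref{UGL} is needed.
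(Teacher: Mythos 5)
Your proposal is correct and follows exactly the route the paper intends: the paper's own proof of this proposition consists of the single remark that it is ``essentially the same as the proof of Theorem \ref{MT2} (b) (ii)'', and your argument is precisely that adaptation (variational It\^o formula, (H2) with $\lambda=0$ to produce the $L^p(V)$ term, (H4) plus BDG/Young with localisation for the martingale, Lemma \ref{A1} to convert $\Vert \bar X_s\Vert_{L^2_H}^2$ into the running supremum, then Gronwall). The bookkeeping caveats you flag are the right ones, and nothing beyond the techniques already used in Theorem \ref{MT2}(b)(ii) and Lemma \ref{UGL} is needed.
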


\begin{proof}
The proof is essentially the same as the proof of Theorem \ref{MT2} (b) (ii).
\end{proof}

\section*{Acknowledgment}
I want to thank Prof. Dr. Michael Röckner and Dr. Tatjana Pasurek for fruitful discussion and helpful comments during the preparation of this paper.
Support of the DFG through CRC 1283 is also gratefully acknowledged.


\begin{thebibliography}{10}

\bibitem{alt}
H.~W. Alt.
\newblock {\em Lineare Funktionalanalysis}.
\newblock Springer-Lehrbuch Masterclass. Springer Berlin Heidelberg, Berlin,
  Heidelberg, sixth edition, 2012.

\bibitem{Ambro}
L.~Ambrosio, N.~Gigli, and G.~Savar\'{e}.
\newblock {\em Gradient flows in metric spaces and in the space of probability
  measures}.
\newblock Lectures in Mathematics ETH Z\"{u}rich. Birkh\"{a}user Verlag, Basel,
  second edition, 2008.

\bibitem{Roeck2}
V.~Barbu and M.~Röckner.
\newblock From nonlinear {F}okker-{P}lanck equations to solutions of
  distribution dependent {S}{D}{E}.
\newblock Eprint: arXiv:1808.10706[math.PR]. To appear in Ann. Prob., 2018.

\bibitem{MIT}
H.~Bauer.
\newblock {\em Measure and integration theory}, volume~26 of {\em De Gruyter
  Studies in Mathematics}.
\newblock Walter de Gruyter \& Co., Berlin, 2001.
\newblock Translated from the German by Robert B. Burckel.

\bibitem{BWT}
H.~Bauer.
\newblock {\em Wahrscheinlichkeitstheorie}.
\newblock de Gruyter Lehrbuch. [de Gruyter Textbook]. Walter de Gruyter \& Co.,
  Berlin, fifth edition, 2002.

\bibitem{FPPK}
V.~I. Bogachev, N.~V. Krylov, M.~R\"{o}ckner, and S.~V. Shaposhnikov.
\newblock {\em Fokker-{P}lanck-{K}olmogorov equations}, volume 207 of {\em
  Mathematical Surveys and Monographs}.
\newblock American Mathematical Society, Providence, RI, 2015.

\bibitem{CD18}
R.~Carmona and F.~Delarue.
\newblock {\em Probabilistic theory of mean field games with applications. {I}:
  Mean field FBSDEs, control, and games, {II}: Mean field games with common
  noise and master equations}, volume 83 \& 84 of {\em Probability Theory and
  Stochastic Modelling}.
\newblock Springer, Cham, 2018.

\bibitem{DaPrato}
G.~Da~Prato and J.~Zabczyk.
\newblock {\em Stochastic equations in infinite dimensions}, volume 152 of {\em
  Encyclopedia of Mathematics and its Applications}.
\newblock Cambridge University Press, Cambridge, second edition, 2014.

\bibitem{Doob}
J.~L. Doob.
\newblock {\em Stochastic processes}.
\newblock Wiley series in probability and mathematical statistics : Probability
  and mathematical statistics. Wiley, New York, seventh edition, 1967.

\bibitem{1803.09320}
G.~dos Reis, G.~Smith, and P.~Tankov.
\newblock Importance sampling for {M}c{K}ean-{V}lasov {S}{D}{E}s.
\newblock Eprint: arXiv:1803.09320[math.PR], 2018.

\bibitem{1802.03974}
W.~Hammersley, D.~Šiška, and L.~Szpruch.
\newblock {M}c{K}ean-{V}lasov {S}{D}{E}s under {M}easure {D}ependent {L}yapunov
  {C}onditions.
\newblock Eprint: arXiv:1802.03974[math.PR], 2018.

\bibitem{MP}
X.~Huang, M.~R\"{o}ckner, and F.-Y. Wang.
\newblock Nonlinear {F}okker-{P}lanck equations for probability measures on
  path space and path-distribution dependent {SDE}s.
\newblock {\em Discrete Contin. Dyn. Syst.}, 39(6):3017--3035, 2019.

\bibitem{1805.01682}
X.~Huang and F.-Y. Wang.
\newblock Distribution dependent {SDE}s with singular coefficients.
\newblock {\em Stochastic Process. Appl.}, 129(11):4747--4770, 2019.

\bibitem{Kechris2012}
A.~S. Kechris.
\newblock {\em Classical descriptive set theory}, volume 156 of {\em Graduate
  Texts in Mathematics}.
\newblock Springer-Verlag, New York, 1995.

\bibitem{Krylov}
N.~V. Krylov.
\newblock On {K}olmogorov's equations for finite-dimensional diffusions.
\newblock In {\em Stochastic {PDE}'s and {K}olmogorov equations in infinite
  dimensions ({C}etraro, 1998)}, volume 1715 of {\em Lecture Notes in Math.},
  pages 1--63. Springer, Berlin, 1999.

\bibitem{Liu2015}
W.~Liu and M.~R\"{o}ckner.
\newblock {\em Stochastic partial differential equations: an introduction}.
\newblock Universitext. Springer, Cham, 2015.

\bibitem{1805.01654}
S.~Mehri, M.~Scheutzow, W.~Stannat, and B.~Z. Zangeneh.
\newblock Propagation of chaos for stochastic spatially structured neuronal
  networks with delay driven by jump diffusions.
\newblock {\em Ann. Appl. Probab.}, 30(1):175--207, 2020.

\bibitem{1603.02212}
Y.~S. Mishura and A.~Y. Veretennikov.
\newblock Existence and uniqueness theorems for solutions of
  {M}c{K}ean--{V}lasov stochastic equations.
\newblock Eprint: arXiv:1603.02212[math.PR], 2016.

\bibitem{Fake}
K.~R. Parthasarathy.
\newblock {\em Probability measures on metric spaces}.
\newblock Probability and Mathematical Statistics, No. 3. Academic Press, Inc.,
  New York-London, 1967.

\bibitem{Roeck1}
M.~Röckner and X.~Zhang.
\newblock Well-posedness of distribution dependent {S}{D}{E}s with singular
  drifts.
\newblock Eprint: arXiv:1809.02216[math.PR], 2018.

\bibitem{RZhu}
M.~R\"{o}ckner, R.~Zhu, and X.~Zhu.
\newblock Existence and uniqueness of solutions to stochastic functional
  differential equations in infinite dimensions.
\newblock {\em Nonlinear Anal.}, 125:358--397, 2015.

\bibitem{Str}
D.~W. Stroock.
\newblock {\em Probability theory: An analytic view}.
\newblock Cambridge University Press, Cambridge, second edition, 2011.

\bibitem{Villani2008}
C.~Villani.
\newblock {\em Optimal transport: Old and new}, volume 338 of {\em Grundlehren
  der Mathematischen Wissenschaften [Fundamental Principles of Mathematical
  Sciences]}.
\newblock Springer-Verlag, Berlin, 2009.

\bibitem{STInt}
H.~von Weizs\"{a}cker and G.~Winkler.
\newblock {\em Stochastic integrals: An introduction}.
\newblock Advanced Lectures in Mathematics. Friedr. Vieweg \& Sohn,
  Braunschweig, 1990.

\bibitem{WangH}
F.-Y. Wang.
\newblock {\em Harnack inequalities for stochastic partial differential
  equations}.
\newblock SpringerBriefs in Mathematics. Springer, New York, 2013.

\bibitem{ODLP}
F.-Y. Wang.
\newblock Distribution dependent {SDE}s for {L}andau type equations.
\newblock {\em Stochastic Process. Appl.}, 128(2):595--621, 2018.

\bibitem{Yosida}
K.~Yosida.
\newblock {\em Functional analysis}, volume 123 of {\em Grundlehren der
  Mathematischen Wissenschaften [Fundamental Principles of Mathematical
  Sciences]}.
\newblock Springer-Verlag, Berlin-New York, third edition, 1971.

\end{thebibliography}
\end{document}